\theoremstyle{plain}
\newtheorem{theo}{Theorem}[subsection]
\newtheorem{prop}[theo]{Proposition}
\newtheorem{lemm}[theo]{Lemma}
\newtheorem{cor}[theo]{Cororally}
\theoremstyle{definition}
\newtheorem{defi}[theo]{Definition}
\newtheorem{exa}[theo]{Example}
\numberwithin{equation}{section}
\newcommand{\semiring}{\mathrm{semiring}}
\newcommand{\lambdasemiring}{$\lambda$-semiring}
\newcommand{\prelambdaring}{pre-$\lambda$-ring}
\newcommand{\lambdaf}[1]{$\lambda$-#1}
\newcommand{\Map}[2]{\mathrm{Map}(#1,#2)}
\newcommand{\MapC}[1]{\mathrm{Map}(#1,\mathbb{C})}
\newcommand{\Ztorsionfree}{\mathop{\mathbb{Z}\text{-}\mathrm{torsion}\text{-}\mathrm{free}}\nolimits}
\newcommand{\Qalg}{\mathbb{Q}\text{-}\mathrm{algebra}}
\newcommand{\inn}[3]{\langle #1,#2\rangle_{#3}}
\begin{document}

\author{Tomoyuki Tamura}

\title{Irreducible decomposition and calculating of multiplicity of the symmetric and exterior powers representation of finite groups}



\date{\today}

\maketitle

\thispagestyle{empty}
\begin{abstract}

In this paper\footnote[1]{2010 Mathematics Subject Classification. Primary 20C15, Secondary 20C05; 13F99.\\ Keywords and Phrases; Symmetric powers representation; exterior powers representation; finite group; character table; $\lambda$-ring.} we consider symmetric powers representation and exterior powers representation of finite groups, which generated by the representation which has finite dimension over the complex field. We calculate the multiplicity of irreducible component of two representations of some representation by using a character theory of representation and a \prelambdaring, for example, the regular representation.
\end{abstract}

\begin{flushleft}
\Large Introduction
\end{flushleft}

Let $G$ be a finite group. For any representation $\rho :G\rightarrow GL(V)$ where $V$ is a finite dimension vector space over the complex field $\mathbb{C}$, we obtain the representation whose representation space is a $i$-th symmetric power space $S^i(V)$ (it is called to the $i$-th symmetric power representation) and the representation whose representation space is a $i$-th exterior power space $\bigwedge^i(V)$ (it is called to the $i$-th exterior power representation) for any nonnegative integers $i\geq 0$ as follows.
\begin{eqnarray*}
S^i\rho&:&G\rightarrow GL(S^i(V)),\ S^i(\rho)(g)(v_1 \cdots v_i):=(\rho(g)v_1)\cdots(\rho(g)v_i) \\
\Lambda^i\rho&:&G\rightarrow GL(\textstyle\bigwedge^i(V)),\ \Lambda^i\rho(g)(v_1\wedge\cdots\wedge v_i):=\rho(g)v_1\wedge\cdots\wedge \rho(g)v_i.
\end{eqnarray*}
for any $g\in G$ and $v_1,\dots v_i\in V$. Let $V_1,V_2,\dots,V_k$ be all of irreducible representation spaces over $\mathbb{C}$ of $G$. Since the representation $S^i(V)$ and $\bigwedge^i(V)$ are the representation of $G$ and over $\mathbb{C}$, so these representations are completely reducible from Maschkes Theorem and isomorphic to direct sum of some $V_1,V_2,\dots,V_k$ as the representation of $G$. In addition,  representations $S^i(V)$ and $\bigwedge^i(V)$ have a finite dimension over $\mathbb{C}$ too since $V$ has a finite dimension over $\mathbb{C}$. Hence, these representation are isomorphic to direct sum of irreducible representation, and every irreducible component appears finitely, and uniquely. On the other words, we have
\begin{eqnarray*}
S^i(V)\cong \overbrace{V_1 \oplus \cdots \oplus V_1}^{m_{i,1}}\oplus \overbrace{V_2 \oplus \cdots \oplus V_2}^{m_{i,2}}\oplus\cdots\oplus\overbrace{V_k \oplus \cdots \oplus V_k}^{m_{i,k}} \\
\textstyle\bigwedge^i(V)\cong \overbrace{V_1 \oplus \cdots \oplus V_1}^{n_{i,1}}\oplus\overbrace{V_2 \oplus \cdots \oplus V_2}^{n_{i,2}}\oplus\cdots\oplus\overbrace{V_k \oplus \cdots \oplus V_k}^{n_{i,k}}
\end{eqnarray*}
and nonnegative integers  $m_{i,1},m_{i,2},\dots,m_{i,k},n_{i,1},n_{i,2},\dots,n_{i,k}$ are determined finitely and uniquely. The purpose of this paper is that calculating about it.

We consider about this character $\chi$ since $V$ has a finite dimension. A set of all maps from $G$ to $\mathbb{C}$, which is said to be $\MapC{G}$, becomes to \prelambdaring (In fact, it is $\lambda$-ring). A character $\chi$ belongs to $\MapC{G}$. In addition, $S^i(\chi)$, where maps $S^i$ are called to symmetric powers operation, implies that the character of the $i$-th symmetric powers representation, and $\lambda^i(\chi)$, where maps $\lambda^i$ is called to $\lambda$-operation, implies that the character of the $i$-th exterior powers representation. we calculate $S^i(\chi)$ and $\lambda^i(\chi)$, and be expressed by sum of irreducible character.

Actually the general method of calculation of multiplicity using of character theory is known in the \cite{bu-1}. In this paper, we describe the how to calculate more efficient on the method of \cite{bu-1}  about some finite groups and some representation.

 The structure of the paper, $\S1$ to $\S4$ is a preparation, which is known as very well. It is discussed more detail than this paper in some reference. Main result of this paper describe in $\S\ref{calculateresult1}$ and $\S\ref{somecalculate}$, however  about $\S\ref{calculateresultoperate}$ is described about the method which is written in  \cite{bu-1}.  $\S\ref{semiringsection}$ gives the definition and properties of a $\semiring$ and the ring completion. In $\S\ref{logdiffsection}$, we provide a theory of symmetric polynomial and logarithmic derivation. In $\S\ref{lambdasection}$, we descibe the definition and properties of \prelambdaring At last, we mix $\S1, \S2$ and $\S3$ in $\S\ref{charactersection}$, and consider the relation of characters and $\lambda$-operation.

 In $\S\ref{calculateresultgenerate}$, we describe the method of generating function of multiplicity. In $\S\ref{calculateresult1st}$ to $\S\ref{calculateresultcentral}$, we discuss about some representation space, which are one-dimension representation in   $\S\ref{calculateresult1st}$, a regular representation and that generated by action $G$ to the quotient $G/N$ where $N$ is a normal subgroup of $G$, in  $\S\ref{calculateresultburnside}$ and made by one-dimension representation of $N$ that $N$ is contained in a center of $G$ in $\S\ref{calculateresultcentral}$.\ In $\S\ref{calculateresultirrirr}$, we describe the relation of the irreducible and reducible representation. In $\S\ref{calculateresultquotient}$, we introduce the method which is using the quotient group of $G$. At last we calculate the irreducible representation which has dimension more than 2 in some finite group in $\S\ref{somecalculate}$.

\tableofcontents

\thispagestyle{empty}
\newpage

\setcounter{page}{1}

\section{Semiring}\label{semiringsection}
First, we describe about semiring, and the group and ring completion. we will be used it a set of isomorphism class of representation and its character. We can see more about semiring in $\S9.3$ of \cite{bu-4} in detail.

In this paper , Assume that any ring has a unit element 1.

\subsection{Semiring}

\begin{defi}
We call that a semiring is a triple $(S, ``+", ``\cdot")$, where $S$ is a set, $``+"$ is the additive operation, $``\cdot"$ is the multiplicative operation, and it is satisfied all the axioms of a ring except the existence of a negative or additive inverse.
\end{defi}

\begin{defi}
A morphism from a semiring $S$ to a semiring $S'$ is the function $f:S\rightarrow S'$ such that $f(r+s)=f(r)+f(s), f(rs)=f(r)f(s), f(0)=0, f(1)=1$ for any elemets $r,s$ in $S$.
\end{defi} 

\subsection{The ring completion}\label{semiringcompletion}
Next, we describe the ring completion of a semiring.

\begin{defi}
The ring completion of a semiring $S$ is a pair $(S^*, \theta)$, where $S^*$ is a ring and $\theta$ is a morphism from a semiring $S$ to a ring $S^*$ such that if $f:S\rightarrow R$ is a morphism of semiring, there exists a ring morphism $g:S^*\rightarrow R$ such that $g\circ\theta=f$. Moreover, $g$ is required to be unique.
\end{defi}

In [1], Husemoller describe that there is a ring completion of any semiring $S$. We consider the set $S\times S$, and put the following equivalence relation. we define that two elements $(a,b)$ and $(a',b)$ in $S\times S$ are equivalent if there is a element $c\in S$ such that $a+b'+c=a'+b+c$ holds. Let $\inn{a}{b}{}$ denote the equivalent class of $(a,b)$, and let $S^*$ denote the set of all equivalent classes $\inn{a}{b}{}$.
$S^*$ becomes a ring with operators $\inn{a}{b}{}+\inn{c}{d}{}:=\inn{a+c}{b+d}{},\ \inn{a}{b}{}\inn{c}{d}{}=\inn{ac+bd}{ad+bc}{}$, which is well-defined.  

\begin{prop}\label{semiuniv}
\begin{enumerate}
\item The function $\theta:S\rightarrow S^*$ by $\theta(a)=\inn{a}{0}{}$ is a morphism of a semiring.
\item (The group completion) Let $A$ be an abelian with the additive operator $``+"$, and let $f:(S,``+")\rightarrow (A,``+")$ be a morphism of monoid. Then, there exists a group homomorphism uniquely $g:(S^*, ``+")\rightarrow (A,``+")$ such that $g\circ\theta=f$.  
\item (The ring completion) Let $R$ be a ring, and let $f: S \rightarrow A $ be a morphism of semiring. Then, there exists a ring homomorphism uniquely $g:S^*\rightarrow A$ such that $g\circ\theta=f$.  
\end{enumerate}
\end{prop}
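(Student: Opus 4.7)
The plan is to handle the three items in order, using the explicit description of $S^*$ as equivalence classes $\inn{a}{b}{}$ and building every candidate map via the naive formula $g(\inn{a}{b}{}) := f(a) - f(b)$.

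For (1), I would just verify directly from the definitions of the operations on $S^*$ that $\theta$ respects addition, multiplication, and preserves $0$ and $1$. The additive case gives $\theta(a)+\theta(b)=\inn{a}{0}{}+\inn{b}{0}{}=\inn{a+b}{0}{}=\theta(a+b)$, and for multiplication one computes $\theta(a)\theta(b)=\inn{a}{0}{}\inn{b}{0}{}=\inn{ab+0\cdot 0}{a\cdot 0+0\cdot b}{}=\inn{ab}{0}{}=\theta(ab)$. The identities $\theta(0)=\inn{0}{0}{}$ and $\theta(1)=\inn{1}{0}{}$ are the zero and unit of $S^*$ by the preceding discussion. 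Nothing here is subtle; it is just unpacking notation.

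For (2), define $g:S^{*}\to A$ by $g(\inn{a}{b}{}) := f(a)-f(b)$; the first task is well-definedness. If $\inn{a}{b}{} = \inn{a'}{b'}{}$, there is $c\in S$ with $a+b'+c = a'+b+c$. Applying the monoid morphism $f$ gives $f(a)+f(b')+f(c)=f(a')+f(b)+f(c)$ in $A$, and since $A$ is an abelian \emph{group} we may cancel $f(c)$ to obtain $f(a)-f(b)=f(a')-f(b')$. After that, checking that $g$ is a group homomorphism is a direct calculation from the definition of $+$ on $S^*$, and $g\circ\theta = f$ is immediate from $f(0)=0$. Uniqueness follows by observing that $\inn{a}{b}{} = \theta(a) + (-\theta(b))$ in the additive group $S^*$, so any homomorphism extending $f$ through $\theta$ is forced to take the value $f(a)-f(b)$ on $\inn{a}{b}{}$.

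For (3), I reuse the $g$ from (2) and only need to verify that it is multiplicative and sends $1$ to $1$. The unit condition is $g(\inn{1}{0}{})=f(1)-f(0)=1$. For multiplicativity, compute
\begin{equation*}
g\!\left(\inn{a}{b}{}\inn{c}{d}{}\right) = g(\inn{ac+bd}{ad+bc}{}) = f(ac+bd)-f(ad+bc),
\end{equation*}
expand using that $f$ is a semiring morphism, and recognize the result as $(f(a)-f(b))(f(c)-f(d)) = g(\inn{a}{b}{})\,g(\inn{c}{d}{})$. Uniqueness is inherited from (2), since a ring morphism is in particular a group morphism on the underlying additive groups.

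The only step with any real content is the well-definedness in (2): it is the place where we genuinely use that $A$ has additive inverses, which is what forces the auxiliary element $c$ in the equivalence relation to be harmless. Everything else is bookkeeping on the formulas defining $+$ and $\cdot$ on $S^*$.
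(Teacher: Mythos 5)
Your proposal is correct and follows exactly the route the paper indicates: the paper cites Husemoller for the construction and simply records the formula $g(\inn{a}{b}{})=f(a)-f(b)$, which is precisely the map you build and verify. Your write-up just supplies the routine checks (well-definedness via cancellation in the abelian group $A$, additivity, multiplicativity, and uniqueness from $\inn{a}{b}{}=\theta(a)-\theta(b)$) that the paper leaves to the reference.
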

A map $g$ of this Proposition is constructed to $g(\inn{a}{b}{})=f(a)-f(b)$.

From this Proposition, we can see that there exists a ring completion of any semirings.

\newpage

\section{Polynomial and formal power series}\label{logdiffsection}
The purpose of this chapter, we consider a polynomial, a formal power series, a symmetric polynomial, and those properties and logarithmic derivative, which will be used in next chapter, $\lambda$-ring. Let $R$ be a commutative ring. We call $R[x_1,\dots,x_n]$ that a polynomial ring over $R$ in $n$ independent variables $x_1,\dots,x_n$, and we call $R[[x_1,\dots,x_n]]$ that a set of formal power series over $R$ in $n$ independent variables $x_1,\dots,x_n$, for each $n\in\mathbb{N}$. 
we can see about symmetric polynomial in \cite{bu-7}, and we can see about logarithmic derivative in $\S3$ of \cite{bu-9}.

\subsection{Derivative of a formal power series}\label{logdiffsection1}
Let $R[[t]]$ be a set of formal power series over $R$ in variable $t$. we define a derivation of a formal power series $f(t)=\displaystyle\sum_{i=0}^{\infty}a_it^i$ in $R[[t]]$  by
\[ \dfrac{d}{dt}f(t):=\sum_{i=1}^{\infty}ia_it^{i-1}.\]
This Definition is different of the Definition in the real functions, which is using a property of limit. However, we can also use next Proposition in this Definition.

\begin{prop}\label{diff}
The follow statements hold for elements $a$, $b$ in $R$ and $f$, $g$ in $R[[t]]$.
\begin{enumerate}
\item $\dfrac{d}{dt}(af+bg)(t)=a\dfrac{d}{dt}f(t)+b\dfrac{d}{dt}g(t).$
\item $\dfrac{d}{dt}(fg)(t)=f(t)\Big(\dfrac{d}{dt}g(t)\Big)+\Big(\dfrac{d}{dt}f(t)\Big)g(t).$
\item If the constant term of $g$ is $0$, one has $\dfrac{d}{dt}f(g(t))=\Big(\dfrac{d}{dt}f\Big)(g(t))\dfrac{d}{dt}g(t)$.
\end{enumerate}
\end{prop}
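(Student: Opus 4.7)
The plan is to prove the three parts in order, since (2) feeds into (3), and each one reduces to direct coefficient-level manipulation.

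For (1), I would expand $f(t)=\sum_i a_i t^i$ and $g(t)=\sum_i b_i t^i$, so that $(af+bg)(t)=\sum_i(aa_i+bb_i)t^i$. Applying the definition of the formal derivative and using $i(aa_i+bb_i)=a(ia_i)+b(ib_i)$ yields the identity immediately.

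For (2), I would form the Cauchy product $f(t)g(t)=\sum_n\bigl(\sum_{i+j=n}a_ib_j\bigr)t^n$, differentiate via the definition to get $\sum_n\sum_{i+j=n}(i+j)a_ib_j\,t^{n-1}$, split $(i+j)$ into its two pieces, and re-index each resulting double sum to recognise $\bigl(\frac{d}{dt}f\bigr)(t)g(t)+f(t)\bigl(\frac{d}{dt}g\bigr)(t)$.

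Part (3) is the substantive one, and I would carry it out in three stages. First, by induction on $n\geq 0$, with a single application of the Leibniz rule (2) at each step, establish $\frac{d}{dt}(g(t)^n)=ng(t)^{n-1}\frac{d}{dt}g(t)$. Second, combine this with the linearity from (1) to obtain $\frac{d}{dt}P(g(t))=P'(g(t))\frac{d}{dt}g(t)$ for every polynomial $P\in R[t]$. Third, pass from polynomials to formal power series by truncation: the hypothesis that $g$ has zero constant term forces $g(t)^i$ to have $t$-adic valuation at least $i$, so for any fixed $N$ the coefficient of $t^N$ on either side of the claimed identity is determined by the polynomial truncation $f_{N+1}=\sum_{i=0}^{N+1}a_it^i$. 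Applying the polynomial case to $f_{N+1}$ and letting $N$ vary proves the identity in $R[[t]]$.

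The main obstacle is precisely this third stage. Parts (1), (2), and the polynomial version of (3) are pure finite algebra, but the chain rule for formal power series requires justifying that $\frac{d}{dt}$ commutes with the infinite composition $f(g(t))=\sum_i a_ig(t)^i$. This relies essentially on the hypothesis on the constant term of $g$, without which the composition would not even lie in $R[[t]]$; once that hypothesis is in place, each coefficient of the composition is a finite sum and the truncation argument reduces everything to the polynomial case already established.
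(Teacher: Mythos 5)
Your proof is correct. Note that the paper itself states Proposition \ref{diff} without proof, treating it as standard, so there is no argument in the text to compare against; your coefficient-level verification of (1) and (2), followed by the reduction of the chain rule (3) to the polynomial case via the valuation bound $\operatorname{ord}(g^i)\geq i$ and truncation of $f$, is the standard complete justification and fills that gap correctly.
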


\subsection{Symmetric polynomial}
Let $S_n$ be a symmetric group each $n=1,2,3\dots$ . For each $\sigma\in S_n$ and polynoimal $f\in\mathbb{Z}[[x_1,\dots,x_n]$, which coefficient are integer, we define $(\sigma f)(x_1,\dots,x_n)$ by $f(x_{\sigma^{-1}(1)},\dots,x_{\sigma^{-1}(n)}$. Also, we call  $\Lambda(X^n)$ as follows, and its element is called a symmetric polynomial.
\[ \Lambda(X^n):=\{\ f\in \mathbb{Z}[x_1,\dots,x_n]\ |\ \sigma f=f\ \mathrm{for}\ {}^{\forall} \sigma \in S_n\} \]

For each $k=1,\dots,n$ , a $k$-th elementary symmetric polynomial $e_k$ on $x_1,\dots,x_n$ is denoted by the following statement.
\begin{eqnarray}\label{element}
e_k:=\sum_{1\leqq i_1<\cdots<i_k\leqq n}x_{i_1}\cdots x_{i_k}.
\end{eqnarray}
An elementary symmetric polynomial belongs to $\Lambda(X^n)$, and the elementary symmetric polynomials are algebraically independent. Hence, we can consider the following  evaluation map of $n$ elements $a_1,\dots,a_n$ in $R$. It is a ring homomorphism.
\begin{eqnarray}\label{evalue}
F:\Lambda(X^n)=\mathbb{Z}[e_1,\dots,e_n]\rightarrow R, F(e_j)=a_j\ (j=1,\dots,n). 
\end{eqnarray}

Next, we denote a symbol of complete symmetric polynomial and power-sum symmetric polynomial. For each $k\in\mathbb{N}$, $h_k(e_1,\dots,e_n)$ is a $k$-th complete symmetric polynomial on $x_1,\dots, x_n$, which is expressed by polynomial of  $e_1,\dots,e_n$. Similarly, $Q_k(e_1,\dots,e_n)$ is a $k$-th power-sum symmetric polynomial on $x_1,\dots, x_n$, which is expressed by polynomial of $e_1,\dots e_n$. On the other hand, the following statement holds.

\begin{eqnarray}
\label{hyperelement} h_k(e_1,\dots,e_n)&:=&\sum_{1\leqq i_1\leqq\cdots\leqq i_k\leqq n}x_{i_1}\cdots x_{i_k}, \\
\label{powerselement}Q_k(e_1,\dots,e_n)&:=&x_1^k+\cdots+x_n^k.
\end{eqnarray}
 
\subsection{The inverse of multiplication and the logarithmic derivative}
\begin{defi}
For any ring homomorphism $F:R_1\rightarrow R_2$, we define a map  $F_{\Lambda}:R_1[[t]]\rightarrow R_2[[t]]$ by following equality for any formal power series $\sum_{i=0}^{\infty}a_it^i$.
\begin{eqnarray}\label{lambdaup}
 F_{\Lambda}\Big(\sum_{i=0}^{\infty}a_it^i\Big):=\sum_{i=0}^{\infty}F(a_i)t^i.
\end{eqnarray}
$F_{\Lambda}$ is a $\mathrm{ring\ homomorphism}$ from $R_1[[t]]$ to $R_2[[t]]$.
\end{defi}

In the set of symmetric polynomial $\Lambda(X^n)[[t]]$, By using (\ref{element}), (\ref{hyperelement}) and the generating function of elementary and complete symmetric polynomials, we obtain the following equality.

\begin{eqnarray}
\label{geneelement} \prod_{i=1}^n(1+x_it)&=&1+\sum_{i=1}^ne_it^i,\\
\label{genehyper}\prod_{i=1}^n\Big(\dfrac{1}{1-x_it}\Big)&=&1+\sum_{i=1}^nh_i(e_1,\dots,e_n)t^i.
\end{eqnarray}
Hence, we can see the next Lemma from (\ref{evalue}),\ (\ref{lambdaup}) and (\ref{geneelement}).
\begin{lemm}
In any ring $R$ and $n$ element $a_1,\dots,a_n$ in $R$, we define $F:\Lambda(X^n)\rightarrow R$ be a evaluation map by $F(e_j)=a_j\ (j=0,1,\dots,n)$, so we obtain
\begin{eqnarray}\label{polyevalue}
 F_{\Lambda}\Big( \prod_{i=1}^n(1+x_it)\Big)=1+a_1t+a_2t^2+\cdots+a_nt^n.
\end{eqnarray}
\end{lemm}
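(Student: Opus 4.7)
The plan is to treat this as a direct unpacking of the definitions: the statement is essentially a translation of the generating-function identity (\ref{geneelement}) through the coefficient-wise ring homomorphism $F_\Lambda$.

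First I would start from the identity (\ref{geneelement}), read inside the formal power series ring $\Lambda(X^n)[[t]]$:
\[
\prod_{i=1}^{n}(1+x_i t) = 1 + \sum_{i=1}^{n} e_i\, t^i.
\]
Since the left-hand side and the right-hand side are equal as elements of $\Lambda(X^n)[[t]]$, applying the ring homomorphism $F_\Lambda : \Lambda(X^n)[[t]] \to R[[t]]$ to both sides yields an equality in $R[[t]]$.

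Next I would compute the right-hand side using the definition (\ref{lambdaup}) of $F_\Lambda$, which applies $F$ term by term. Since $F$ is a ring homomorphism we have $F(1)=1$, and by the defining property of the evaluation map (\ref{evalue}) we have $F(e_j)=a_j$ for $j=1,\dots,n$. Hence
\[
F_\Lambda\Bigl(1 + \sum_{i=1}^n e_i t^i\Bigr) = 1 + \sum_{i=1}^n F(e_i)\, t^i = 1 + a_1 t + a_2 t^2 + \cdots + a_n t^n,
\]
which is exactly the claimed right-hand side.

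There is essentially no obstacle: the only subtle point is to make sure one is allowed to apply $F_\Lambda$ to the infinite product on the left, but this is legitimate because the product was already rewritten as a genuine element of $\Lambda(X^n)[[t]]$ via (\ref{geneelement}) before $F_\Lambda$ is applied. Thus the lemma follows directly from (\ref{evalue}), (\ref{lambdaup}), and (\ref{geneelement}) as indicated in the paragraph preceding the statement.
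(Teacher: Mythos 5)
Your proof is correct and is exactly the argument the paper intends: the lemma is stated as an immediate consequence of (\ref{evalue}), (\ref{lambdaup}) and (\ref{geneelement}), and you have simply written out that deduction by applying the coefficient-wise homomorphism $F_{\Lambda}$ to both sides of (\ref{geneelement}). The only minor remark is that the product $\prod_{i=1}^{n}(1+x_it)$ is a finite product of polynomials, so the convergence concern you raise at the end does not actually arise.
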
 
Let $R$ be a commutative ring. We define the strict power series over $R$ by the formal power series with coefficients in $R$ and it has constant term 1. and $\Lambda(R)$ is defined by the set of all strict power series over $R$. For any ring homomorphism $F:R_1\rightarrow R_2$. It is clear that $F_{\Lambda}(\Lambda(R_1))\subset\Lambda(R_2)$ holds.

\cite{bu-1} and \cite{bu-9} called $\Lambda(R)$ the universal-$\lambda$-ring over $R$, which has a $\lambda$-ring structure as the name represents. However, we do not touch about it except of its addition, which is the multiplication of $R[[t]]$. About it, we have the next Proposition.
 
\begin{prop}\label{h-symm}
$\Lambda(R)$ has a structure of group with the multiplication of formal power series over $R$. Also, let $f=1+\sum_{i=1}^{\infty}a_it^i$ be a strict power series over $R$, and let inverse of $f$ be $f^{-1}=1+\sum_{i=1}^{\infty}b_it^i$. Then the coefficient $b_i$ is a polynomial with integer coefficients in $a_1,\dots a_i$. Moreover,  that $b_i=(-1)^i h_i(a_1,\dots,a_i)$ are hold for each $i\geq 1$.
\end{prop}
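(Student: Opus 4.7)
The plan is to first establish that $\Lambda(R)$ is a group under multiplication by an elementary power-series argument, and then to identify the coefficients of the inverse by pulling back to the universal ring $\Lambda(X^n)$, where formulas (\ref{geneelement}) and (\ref{genehyper}) already tell us everything.

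First I would verify closure and existence of inverses directly. If $f = 1 + \sum_{i \geq 1} a_i t^i$ and $g = 1 + \sum_{i \geq 1} c_i t^i$ are strict, the constant term of $fg$ is $1$, so $\Lambda(R)$ is closed under multiplication; associativity, commutativity, and the neutral element $1$ are inherited from $R[[t]]$. For a two-sided inverse, I would write $f^{-1} = 1 + \sum b_i t^i$ and expand $f \cdot f^{-1} = 1$ coefficient by coefficient. The coefficient of $t^n$ with $n \geq 1$ gives the recursion
\[ b_n = -\sum_{k=1}^{n} a_k b_{n-k}, \qquad b_0 = 1. \]
Induction on $n$ shows each $b_n$ exists, is unique, and is an integer-coefficient polynomial in $a_1, \ldots, a_n$; this already proves the first half of the Proposition.

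For the identification $b_i = (-1)^i h_i(a_1,\ldots,a_i)$, the key move is to do the computation universally. In $\Lambda(X^n)[[t]]$ with $n \geq i$, equation (\ref{geneelement}) reads $1 + \sum_{j=1}^n e_j t^j = \prod_{j=1}^n(1+x_j t)$, and substituting $t \mapsto -t$ in (\ref{genehyper}) gives
\[ \prod_{j=1}^n \frac{1}{1+x_j t} = 1 + \sum_{j \geq 1} (-1)^j h_j(e_1,\ldots,e_n)\, t^j. \]
The product of these two strict power series is manifestly $1$, so the second is the inverse of the first in $\Lambda(X^n)[[t]]$.

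Finally I would transfer via the evaluation map $F : \Lambda(X^n) \to R$, $F(e_j) = a_j$, of (\ref{evalue}), applied to the truncation $\tilde f = 1 + \sum_{j=1}^n a_j t^j$. Since $F_{\Lambda}$ is a ring homomorphism it preserves multiplicative inverses, so the uniqueness from the first step identifies the inverse of $\tilde f$ as $1 + \sum_{j \geq 1} (-1)^j h_j(a_1,\ldots,a_n)\, t^j$. For a general $f$, the recursion shows $b_i$ depends only on $a_1, \ldots, a_i$, so truncating at degree $i$ yields $b_i = (-1)^i h_i(a_1,\ldots,a_i)$. The step that requires the most attention, and is the main obstacle, is justifying that $h_i(e_1,\ldots,e_n)$ as an integer polynomial in the $e_j$'s involves only $e_1, \ldots, e_i$; this follows by assigning $e_k$ the weight $k$, under which the $t^i$-coefficient on each side of (\ref{genehyper}) is homogeneous of weighted degree $i$, so no $e_k$ with $k > i$ can appear.
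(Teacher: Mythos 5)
Your proposal is correct and follows essentially the same route as the paper: the recursion $b_n=-\sum_{k=1}^n a_k b_{n-k}$ for existence and integrality, then the evaluation map $F_\Lambda$ together with (\ref{geneelement}) and (\ref{genehyper}) to identify $b_i$ with $(-1)^i h_i$. The only difference is cosmetic: the paper truncates at $n=i$ and compares only the $t^n$-coefficient, so it never needs the stability of $h_i$ as a polynomial in the $e_j$, whereas your general-$n$ truncation does need it and your weighted-degree argument settles it correctly.
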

\begin{proof} It is clear that $\Lambda(R)$ is closed about the multiplication, and has a unit element 1, we consider the inverse of multiplication. Let $ f=1+\sum_{i=1}^{\infty}a_i t^i\in\Lambda(R)$ be a strict power series over $R$. Then we  let $b_n$ be the following equality by induction of $n\in\mathbb{N}$.
\begin{eqnarray}\label{h-symm1}
\begin{split}
b_1 &:= -a_1, \\
b_n &:= -(b_{n-1}a_1+b_{n-2}a_2+\cdots+b_1a_{n-1}+a_n)\ (n\geq 2).
\end{split}
\end{eqnarray}
It follows that $\sum_{p+q=n}b_pa_q =0$, we have $ (1+\sum_{n=1}^{\infty}b_nt^n)f=1$, which implies that we finish to proof of the existence of inverse $ f^{-1}=1+\sum_{i=1}^{\infty}b_it^i$. In particular, $b_n$ is a polynomial with integer coefficients in $a_1,\dots,a_n$ from (\ref{h-symm1}), hence the coefficient of $t^n$ in $g_n=1+a_1t+\cdots+a_n t^n$ is equal to $b_n$. Also, let $F_n:\Lambda(X^n)\rightarrow R$ be a evaluation map by setting\ $F_n(e_j)=a_j\ (j=1,2,\dots,n)$, we have
\begin{eqnarray*}
g_n^{-1}&=&{F_n}_{\Lambda}\Big(\prod_{j=1}^n(1+x_it)\Big)^{-1} \\
 &=& {F_n}_{\Lambda}\Big(\prod_{j=1}^n\dfrac{1}{1-x_i(-t)}\Big) \\
 &=& {F_n}_{\Lambda}\Big(1+\sum_{j=1}^{\infty}h_j(e_1,\dots,e_n)(-t)^j \Big) =1+\sum_{j=1}^{\infty}(-1)^j h_j(a_1,\dots,a_n)t^j
\end{eqnarray*}
from (\ref{genehyper}),\ (\ref{polyevalue}). Comparing of coefficients of $t^n$ in this equality, we have $b_n=(-1)^ih_n(a_1,\dots,a_n)$.
\end{proof}

\begin{defi}\label{logdiff}
We define the map $\dfrac{d}{dt}\log:\Lambda(R)\rightarrow R[[t]]$ by $\dfrac{d}{dt}\log(f):= f'f^{-1}$ for any $f\in \Lambda(R)$.
\end{defi}
Definition \ref{logdiff} is given the result of logarithmic derivation of the real function, then we call this map the logarithmic derivation too. We can define it since there is a inverse element for any elements of $\Lambda(R)$, which is domain of it.

From $\mathrm{Proposition\ \ref{diff}\ (2)}$, the follow statement hold for any $f,g\in\Lambda(R)$.
\begin{eqnarray}\label{logdiffhom}
\dfrac{d}{dt}\log(fg)=\dfrac{d}{dt}\log(f)+\dfrac{d}{dt}\log(g).
\end{eqnarray}
It implies that the logarithmic derivation is a homomorphism from the multiplication of $\Lambda(R)$ to the addition of $R[[t]]$.

\begin{prop}\label{logdifffunc}
For any ring homomorphism $F:R_1\rightarrow R_2$, the logarithmic derivation and a map $F_{\lambda}:\Lambda(R_1)\rightarrow\Lambda(R_2)$ are commutative on $\Lambda(R_1)$. In the other words, The following formula is hold on $\Lambda(R_1)$.
\[ \dfrac{d}{dt}\log\circ F_{\Lambda}=F_{\Lambda}\circ\dfrac{d}{dt}\log .\]
\end{prop}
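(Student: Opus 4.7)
The plan is to split $\frac{d}{dt}\log f = f' \cdot f^{-1}$ into its two ingredients, formal differentiation and multiplicative inversion, and to check that $F_\Lambda$ commutes with each of them separately. A one-line computation using the ring homomorphism property of $F_\Lambda$ then finishes the job.

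First I would verify that $F_\Lambda$ commutes with $d/dt$ on all of $R_1[[t]]$: given $f = \sum_{i\geq 0} a_i t^i$, both $F_\Lambda(f')$ and $(F_\Lambda f)'$ equal $\sum_{i\geq 1} i\,F(a_i)\,t^{i-1}$, because a ring homomorphism preserves multiplication by positive integers (viewed as iterated addition of $1$). This step uses nothing beyond the definition of $F_\Lambda$ in equation (\ref{lambdaup}).

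Second, since $F_\Lambda : R_1[[t]] \to R_2[[t]]$ is already known to be a ring homomorphism and every $f \in \Lambda(R_1)$ is a unit by Proposition \ref{h-symm}, for any $f \in \Lambda(R_1)$ we have
$$F_\Lambda(f)\cdot F_\Lambda(f^{-1}) \;=\; F_\Lambda(f\cdot f^{-1}) \;=\; F_\Lambda(1) \;=\; 1,$$
so $F_\Lambda(f^{-1}) = (F_\Lambda f)^{-1}$ in $\Lambda(R_2)$. Combining these two commutations and using multiplicativity of $F_\Lambda$ once more,
$$\frac{d}{dt}\log\bigl(F_\Lambda f\bigr) \;=\; (F_\Lambda f)'\cdot(F_\Lambda f)^{-1} \;=\; F_\Lambda(f')\cdot F_\Lambda(f^{-1}) \;=\; F_\Lambda\bigl(f'\cdot f^{-1}\bigr) \;=\; F_\Lambda\!\left(\frac{d}{dt}\log f\right).$$

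There is no genuine obstacle in this proof; the two commutation facts are essentially immediate from the constructions. The only subtle point worth flagging is that $F_\Lambda$ must here be viewed as the ambient map $R_1[[t]] \to R_2[[t]]$ rather than its restriction to $\Lambda$, since $\frac{d}{dt}\log$ takes values in $R[[t]]$ and not in $\Lambda(R)$; under this convention the displayed identity lives in $R_2[[t]]$ and both sides make sense.
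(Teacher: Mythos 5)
Your proof is correct, and it rests on the same decomposition as the paper's: write $\frac{d}{dt}\log f = f'\cdot f^{-1}$ and push $F_{\Lambda}$ through each factor. The difference is in how the inverse is handled. The paper substitutes the explicit coefficient formula from Proposition \ref{h-symm}, namely $f^{-1}=1+\sum_{i\geq 1}(-1)^ih_i(a_1,\dots,a_i)t^i$, and then uses that a ring homomorphism commutes with the integer-coefficient polynomials $h_i$ (and with multiplication by the integers $i$ appearing in $f'$) to move $F_{\Lambda}$ inside the product term by term. You instead invoke only the abstract facts that $F_{\Lambda}$ is a ring homomorphism and that every element of $\Lambda(R_1)$ is a unit, so $F_{\Lambda}(f^{-1})=(F_{\Lambda}f)^{-1}$ for free; the explicit shape of the inverse's coefficients never enters. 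Your route is slightly cleaner and more robust (it would survive even if one had only proved invertibility without the $h_i$ formula), while the paper's computation has the side benefit of displaying the universal-polynomial structure that it reuses in Lemma \ref{logdiffchalemma}. Your closing remark about viewing $F_{\Lambda}$ as the ambient map $R_1[[t]]\to R_2[[t]]$ is a fair point of hygiene that the paper glosses over.
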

\begin{proof} From Proposition\ \ref{h-symm}, we have
\begin{eqnarray*}
F_{\Lambda}\Big(\dfrac{d}{dt}\log f(t)\Big)&=&F_{\Lambda}\Big(\Big(\sum_{i=1}^{\infty}ia_it^{i-1}\Big)\Big(1+\sum_{i=1}^{\infty}(-1)^ih_i(a_1,\dots,a_i)t^i\Big)\Big) \\
&=&\Big(\Big(\sum_{i=1}^{\infty}if(a_i)t^{i-1}\Big)\Big(1+\sum_{i=1}^{\infty}(-1)^ih_i(f(a_1),\dots,f(a_i))t^i\Big)\Big) \\
&=&\dfrac{d}{dt}\log(F_{\Lambda}(f)). 
\end{eqnarray*}
for all $f(t)=1+\sum_{i=1}^{\infty}a_it^i\in\Lambda(R_1).$
\end{proof}

Next, we say the relation about the strict power series and what is mapping by the logarithmic derivation.

\begin{lemm}\label{logdiffchalemma}
Let $a_1,a_2,\dots$ be the series of elements of $R$, and let $f_n$ be equal to  $1+a_1t+a_2t^2+\cdots+a_nt^n\in\Lambda(R)$ for each $n\in\mathbb{N}$. Then, the followig statements hold.
\begin{enumerate}
\item For integer $n\in\mathbb{N}$, we have \ $-t\dfrac{d}{dt}\log f_n(t)=\displaystyle\sum_{i=1}^{\infty}(-1)^iQ_i(a_1,\dots,a_n)t^i$.
\item For integers $k,n\in\mathbb{N}$ with $k\leq n$, we have $Q_k(a_1,\dots,a_n)=Q_k(a_1,\dots,a_k)$.
\item For integers $k,n\in\mathbb{N}$ with $k>n$,\ we have $Q_k(a_1,\dots,a_n)\\=Q_k(a_1,\dots,a_n,0,\dots,0)$ , where the numbers of  0 of the right hand side is $k-n$.
\end{enumerate}
\end{lemm}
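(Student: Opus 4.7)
The plan is to prove part (1) first by working in $\Lambda(X^n)[[t]]$ and transporting the identity to $R[[t]]$ via the evaluation map $F_n:\Lambda(X^n)\to R$ sending $e_j\mapsto a_j$. Then parts (2) and (3) drop out from (1) by a truncation/padding trick: two strict power series that agree modulo $t^{k+1}$ have the same $t^k$-coefficient under $-t\frac{d}{dt}\log$.

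For (1), by the lemma containing (\ref{polyevalue}) we have $(F_n)_\Lambda\bigl(\prod_{i=1}^n (1+x_it)\bigr)=f_n(t)$. Proposition \ref{logdifffunc} says that $\frac{d}{dt}\log$ commutes with $(F_n)_\Lambda$, and multiplication by $-t$ is respected by the ring homomorphism $(F_n)_\Lambda$, so it suffices to establish
\[
 -t\frac{d}{dt}\log\prod_{i=1}^n(1+x_it)\;=\;\sum_{j=1}^\infty (-1)^j Q_j(e_1,\ldots,e_n)\,t^j
\]
in $\Lambda(X^n)[[t]]$ and then push it down by $(F_n)_\Lambda$. The additivity (\ref{logdiffhom}) reduces the left-hand side to $\sum_{i=1}^n \frac{d}{dt}\log(1+x_it)=\sum_{i=1}^n x_i(1+x_it)^{-1}$. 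Expanding each geometric series $x_i(1+x_it)^{-1}=\sum_{m\ge 0}(-1)^m x_i^{m+1}t^m$ and summing over $i$ gives $\sum_{m\ge 0}(-1)^m(x_1^{m+1}+\cdots+x_n^{m+1})t^m$, which by (\ref{powerselement}) equals $\sum_{m\ge 0}(-1)^m Q_{m+1}(e_1,\ldots,e_n)t^m$. Multiplying by $-t$ and reindexing $j=m+1$ yields the displayed identity.

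For (2), since $k\le n$ the strict power series $f_n(t)$ and $f_k(t)$ differ only in degrees $>k$, so $f_n-f_k\in t^{k+1}R[[t]]$, hence $f_n'-f_k'\in t^k R[[t]]$ and $f_n^{-1}-f_k^{-1}=f_n^{-1}f_k^{-1}(f_k-f_n)\in t^{k+1}R[[t]]$. A short expansion then gives $-t\,f_n'f_n^{-1}\equiv -t\,f_k'f_k^{-1}\pmod{t^{k+1}}$. Comparing the coefficient of $t^k$ via (1) applied to both $f_n$ and $f_k$ yields $Q_k(a_1,\ldots,a_n)=Q_k(a_1,\ldots,a_k)$. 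For (3), when $k>n$ the padded sequence $(a_1,\ldots,a_n,0,\ldots,0)$ of length $k$ produces the strict power series $1+a_1t+\cdots+a_nt^n+0\cdot t^{n+1}+\cdots+0\cdot t^k$, which is literally $f_n(t)$; applying (1) once with $n$ and $(a_1,\ldots,a_n)$ and once with $k$ and the padded sequence, and reading off the coefficient of $t^k$, gives $Q_k(a_1,\ldots,a_n)=Q_k(a_1,\ldots,a_n,0,\ldots,0)$.

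The main technical point is the computation inside $\Lambda(X^n)[[t]]$: one cannot literally form $\log(1+x_it)$ because that would require dividing by integers unavailable in $\mathbb{Z}[[t]]$, so every $\frac{d}{dt}\log$ must be read strictly as Definition \ref{logdiff}, namely $f^{-1}\cdot\frac{d}{dt}f$, and the additivity of $\frac{d}{dt}\log$ on products must be invoked through (\ref{logdiffhom}) rather than through any formal logarithm identity.
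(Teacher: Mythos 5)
Your proof is correct and follows essentially the same route as the paper: part (1) is obtained by pulling back to $\Lambda(X^n)[[t]]$ via the evaluation map, using Proposition \ref{logdifffunc} and the additivity (\ref{logdiffhom}) to reduce to the geometric-series expansion of each $x_i(1+x_it)^{-1}$, and parts (2) and (3) follow by comparing the $t^k$-coefficient, with your congruence-mod-$t^{k+1}$ computation just making explicit the paper's remark that this coefficient depends only on $a_1,\dots,a_k$. No gaps.
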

\begin{proof}
$(1)$ We consider a evaluation map $F_n:\Lambda(X^n)\rightarrow R$ setting by $F_n(e_j)=a_j \ (j=1,2,\dots,n)$. From (\ref{powerselement}),\ (\ref{polyevalue}) and  Proposition\ \ref{logdifffunc} , we have
\begin{eqnarray*}
-t\dfrac{d}{dt}\log f_n(t)&=&-t\dfrac{d}{dt}\log {F_n}_{\Lambda}\Big(\prod_{j=1}^n (1+x_it) \Big) \\
&=&{F_n}_{\Lambda}\Big(-t\dfrac{d}{dt}\log\prod_{j=1}^n(1+x_it)\Big) \\
&=&{F_n}_{\Lambda}\Big(-t\sum_{j=1}^n\dfrac{d}{dt}\log(1+x_it)\Big) \\
&=&{F_n}_{\Lambda}\Big(-t\sum_{j=1}^n\sum_{i=0}^{\infty}x_j^{i+1}(-t)^i \Big) \\ 
&=&{F_n}_{\Lambda}\Big(\sum_{i=1}^{\infty}(-1)^iQ_i(e_1,\dots,e_n)t^i\Big) =\sum_{i=1}^{\infty}(-1)^iQ_i(a_1,\dots,a_n)t^i.
\end{eqnarray*}

$(2)$ The coefficient of $t^k$ in $-t\dfrac{d}{dt}\log f_n(t)$ is expressed by the polynomial with integer coefficients in $a_1,\dots,a_k$, so it is equal to the coefficient of $t^k$ in $-t\dfrac{d}{dt}\log f_k(t)$. so, we have $Q_k(a_1,\dots,a_k)=Q_k(a_1,\dots,a_n).$ from $(1)$.

$(3)$ The polynomial $f_n$ can be seen as $f_n=1+a_1t+a_2t^2+\cdots+a_nt^n+0t^{n+1}+\cdots+0t^k$. Hence we have
\begin{eqnarray}\label{logdiffchalemma1}
-t\dfrac{d}{dt}\log f_n(t)=\sum_{i=1}^{\infty}(-1)^iQ_i(a_1,\dots,a_n,0,\dots,0)t^i
\end{eqnarray}
from $(1)$. On the other hand, we have
\begin{eqnarray}\label{logdiffchalemma2}
-t\dfrac{d}{dt}\log f_n(t)=\sum_{i=1}^{\infty}(-1)^iQ_i(a_1,\dots,a_n)t^i
\end{eqnarray}
from $(1)$. Therefore, The equalities $Q_k(a_1,\dots,a_n)=Q_k(a_1,\dots,a_n,0,\dots,0)$ is given by comparing of the coefficient of $t^k$ in (\ref{logdiffchalemma1}) and (\ref{logdiffchalemma2}).
\end{proof}

\begin{prop}\label{logdiffcha}
The following statements are equivalent for any series $f(t)=1+\sum_{i=1}^{\infty}a_i t^i$ and $g(t)=\sum_{i=1}^{\infty}b_i$ in $R[[t]]$.
\begin{enumerate}
\item $g(-t)=-t\dfrac{d}{dt}\log f(t) $.
\item $b_n=Q_n(a_1,\dots,a_n)$ for $n\geq 1$.
\item $\displaystyle \sum_{i=0}^{n-1}(-1)^i a_i b_{n-i}=(-1)^{n+1}na_n$ for $n\geq 1.$
\end{enumerate}
\end{prop}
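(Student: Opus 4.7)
The plan is to establish the two equivalences $(1) \Leftrightarrow (2)$ and $(1) \Leftrightarrow (3)$ separately; both reduce to short formal power series manipulations, the first drawing on Lemma \ref{logdiffchalemma} and the second on a single multiplication by $f(t)$.

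For $(1) \Leftrightarrow (2)$, the key observation I would exploit is that the $t^k$ coefficient of
\[
-t\frac{d}{dt}\log f(t) \;=\; -t\,f'(t)\,f^{-1}(t)
\]
depends only on $a_1,\dots,a_k$. Indeed, by Proposition \ref{h-symm} the $t^j$ coefficient of $f^{-1}$ is a polynomial in $a_1,\dots,a_j$, so extracting the $t^k$ coefficient of the product above involves only $a_1,\dots,a_k$. Hence I may replace $f$ by its truncation $f_k(t) = 1 + a_1 t + \cdots + a_k t^k$ without changing that coefficient, and Lemma \ref{logdiffchalemma}(1) then identifies it as $(-1)^k Q_k(a_1,\dots,a_k)$. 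Since the $t^k$ coefficient of $g(-t)$ is $(-1)^k b_k$, comparing the two series term by term gives precisely $(2)$.

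For $(1) \Leftrightarrow (3)$, since $f$ is a strict power series it is invertible in $R[[t]]$, so $(1)$ is equivalent to the denominator-free identity $g(-t) f(t) = -t f'(t)$, both sides now lying in $R[[t]]$. Expanding, the $t^n$ coefficient on the right is $-na_n$, and the $t^n$ coefficient on the left is $\sum_{i=1}^{n} (-1)^i a_{n-i} b_i$ with the convention $a_0 = 1$. Multiplying through by $(-1)^n$ and substituting $j = n-i$ reindexes this to
\[
\sum_{j=0}^{n-1} (-1)^j a_j b_{n-j} \;=\; (-1)^{n+1} n a_n,
\]
which is exactly $(3)$.

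The only real difficulty will be sign and index bookkeeping: the substitution $t \mapsto -t$, the derivative factor, and the reindexing in the final step each introduce opportunities for off-by-one or off-by-sign errors, and one must fix the convention $a_0 = 1$ in the sum of $(3)$ explicitly. Conceptually, however, each of the two equivalences is a single formal power series identity, obtained by invoking Lemma \ref{logdiffchalemma}(1) or by clearing the denominator in $-t\frac{d}{dt}\log f$.
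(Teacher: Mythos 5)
Your proposal is correct and follows essentially the same route as the paper: the equivalence $(1)\Leftrightarrow(2)$ by truncating $f$ and invoking Lemma \ref{logdiffchalemma}(1), and $(1)\Leftrightarrow(3)$ by clearing the denominator to get $g(-t)f(t)=-tf'(t)$ and comparing coefficients of $t^n$ after multiplying by $(-1)^n$. Your explicit appeal to Proposition \ref{h-symm} to justify the truncation step is a small clarification of a point the paper merely asserts, but the argument is the same.
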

\begin{proof} Let $f_n$ be $1+a_1t+a_2t^2+\cdots+a_nt^n\in \Lambda(R)$ for each $n\in\mathbb{N}$. The coefficients of $t^n$ in $-t\dfrac{d}{dt}\log f(t)$ is a polynomial with integer coefficients in $a_1,\dots,a_n$, then it is equal to the coefficients of $t^n$ in $f_n(t)$. Then, we can see that the coefficient of $t^n$ in  $-t\dfrac{d}{dt}\log f(t)$ is equal to $(-1)^nQ_n(a_1,\dots,a_n)$ from Lemma \ref{logdiffchalemma}\ $(1)$. Hence we finish to proof of that $(1)$ and $(2)$ are equivalent.

Next, we consider that $(1)$ and $(3)$ are equivalent. From $(1)$, we have
\[ \sum_{i=1}^{\infty}b_i(-t)^i=-t\dfrac{d}{dt}\log\Big(1+\sum_{i=1}^{\infty}a_it^i\Big)=\Big(-\sum_{i=0}^{\infty}ia_it^i\Big)\Big(1+\sum_{i=1}^{\infty}a_it^i\Big)^{-1}\]
Then, we obtain 
\[  \Big(\sum_{i=1}^{\infty}(-1)^ib_it^i\Big)\Big(1+\sum_{i=1}^{\infty}a_it^i\Big)=-\sum_{i=0}^{\infty}ia_it^i \]
Now we compare the coefficients of $t^n$ and multiple by $(-1)^n$, we have the equalities $(3)$ for each $n\in\mathbb{N}$. we can also see the proof of (1) assumption (3) by following this reverse.
\end{proof}

We define maps $z_n$ and $z_t$, which is defined by the logarithmic derivation, and consider one of the condition of commutative ring, $\Ztorsionfree$ and $\Qalg$.
\begin{defi}
A commutative ring $R$ is said to be $\Ztorsionfree$ if whenever  $na=0$ for some element $a\in R$ and integer $n\neq 0$, then $a=0$. and a commutative ring $R$ is said to be $\Qalg$ if it is equipped with ring homomorphism from the rational numbers set $\mathbb{Q}$ to $R$.
\end{defi}
We define maps $f_n:R\rightarrow R$ by $f_n(r)=nr\ (r\in R)$ for each integer $n\neq 0$. Then, the map  $f_n$ is injective for any integer $n\neq 0$ if and only if $R$ is $\Ztorsionfree$.

Moreover, If $R$ is $\Qalg$, we can define $\dfrac{r}{n}:=\dfrac{1}{n}r\in R$  for any $r\in R$ and integer $n\neq0$. In particular, $R$ is $\Ztorsionfree$.

\begin{defi}
For each $n\in\mathbb{N}$, we define maps $z_n:\Lambda(R)\rightarrow R$ by that  $z_n(f)$ is the coefficient of $t^n$ in $-t\dfrac{d}{dt}\log(f(t))$ multiplied $(-1)^n$.
And, we define a map $z_t:\Lambda(R)\rightarrow \Map{\mathbb{N}}{R}$ by $z_t(f)(n):=z_n(f)$ for each strict power series $f\in\Lambda(R)$.
\end{defi}

Since its Definition, we have
\begin{eqnarray}\label{logdiffz}
\sum_{i=1}^{\infty}z_n(f)(-t)^i=-t\dfrac{d}{dt}\log f(t)
\end{eqnarray}
for each $f\in\Lambda(R)$.

 For a ring $R$, $\Map{\mathbb{N}}{R}$, which is used in next Proposition, is a commutative ring with the operation $(\alpha_1+\alpha_2)(n)=\alpha_1(g)+\alpha_2(g),\ (\alpha_1\alpha_2)(g)=\alpha_1(g)\alpha_2(g)$.

\begin{prop}\label{logdiffchaz}
The following statements hold about maps $z_t$ and $z_n$.
\begin{enumerate}
\item We have $z_n(f)=Q_n(a_1,\dots,a_n)$ for any $\displaystyle f=1+\sum_{i=1}^{\infty}a_it^i\in\Lambda(R)$
\item The map $z_t$ is the homomorphism from the multiplication of $\Lambda(R)$ to the addition of $\Map{\mathbb{N}}{R}$. In particular, the map $z_n$ is the homomorphism from the multiplication of $\Lambda(R)$ to the addition of $R$, for each $n\in\mathbb{N}$.
\item If $R$ is $\mathbb{Z}$-torsion-free, then the map $z_t$ is injective.
\item If $R$ is $\mathbb{Q}$-algebra, then the map $z_t$ is bijective.
\end{enumerate}
\end{prop}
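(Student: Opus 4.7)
The plan is to handle the four parts in sequence, leveraging the equivalences in Proposition \ref{logdiffcha} throughout. Parts (1) and (2) are essentially bookkeeping. For (1), by definition $z_n(f)$ is $(-1)^n$ times the coefficient of $t^n$ in $-t\frac{d}{dt}\log f(t)$; the equivalence $(1)\Leftrightarrow(2)$ of Proposition \ref{logdiffcha}, applied with $g(-t)=-t\frac{d}{dt}\log f(t)$, says that this coefficient equals $(-1)^nQ_n(a_1,\dots,a_n)$, so multiplying by $(-1)^n$ yields $z_n(f)=Q_n(a_1,\dots,a_n)$. For (2), equation (\ref{logdiffhom}) reads $\frac{d}{dt}\log(fg)=\frac{d}{dt}\log f+\frac{d}{dt}\log g$; multiplying by $-t$, extracting the coefficient of $t^n$, and applying the sign factor $(-1)^n$ gives $z_n(fg)=z_n(f)+z_n(g)$ for every $n$, so $z_t$ is a homomorphism from $(\Lambda(R),\cdot)$ to $(\Map{\mathbb{N}}{R},+)$.

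For (3), I would use the equivalence $(1)\Leftrightarrow(3)$ of Proposition \ref{logdiffcha}. Suppose $z_t(f)=z_t(f')$ for $f=1+\sum a_it^i$ and $f'=1+\sum a'_it^i$, and set $b_n:=z_n(f)=z_n(f')$. I claim $a_n=a'_n$ for every $n\geq 1$ by induction on $n$. The base $n=1$ is immediate, since the recursion gives $a_1=b_1=a'_1$. Assuming $a_i=a'_i$ for $i<n$, the two instances of the recursion
\[(-1)^{n+1}na_n=\sum_{i=0}^{n-1}(-1)^ia_ib_{n-i},\qquad (-1)^{n+1}na'_n=\sum_{i=0}^{n-1}(-1)^ia'_ib_{n-i}\]
have identical right-hand sides, so $n(a_n-a'_n)=0$; the $\mathbb{Z}$-torsion-free hypothesis forces $a_n=a'_n$, completing the induction.

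For (4), a $\mathbb{Q}$-algebra is automatically $\mathbb{Z}$-torsion-free, so injectivity is inherited from (3); only surjectivity is new. Given $\beta\in\Map{\mathbb{N}}{R}$, write $b_n:=\beta(n)$, set $a_0:=1$, and define recursively
\[a_n:=\frac{(-1)^{n+1}}{n}\sum_{i=0}^{n-1}(-1)^ia_ib_{n-i},\]
which is legitimate because division by any positive integer is available in a $\mathbb{Q}$-algebra. The resulting $f:=1+\sum_{n\geq 1}a_nt^n\in\Lambda(R)$ satisfies condition (3) of Proposition \ref{logdiffcha} by construction, hence also condition (2), namely $b_n=Q_n(a_1,\dots,a_n)$; combined with part (1) of the present proposition this is precisely $z_t(f)=\beta$. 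The main obstacle throughout is keeping the three overlapping sign conventions aligned (the $-t$ in the logarithmic derivative, the $(-1)^n$ in the definition of $z_n$, and the alternating signs inside the Newton-type recursion); once those are matched, (3) and (4) reduce to the one-line inductive and recursive constructions above.
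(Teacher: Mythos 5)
Your proposal is correct, and parts (1), (2) and (4) follow the paper's proof essentially verbatim: (1) and (2) are read off from Proposition \ref{logdiffcha} together with (\ref{logdiffz}) and (\ref{logdiffhom}), and (4) uses the same recursive definition of the $a_n$ with division by $n$ in the $\Qalg$, followed by an appeal to the equivalence $(3)\Leftrightarrow(2)\Leftrightarrow(1)$ of Proposition \ref{logdiffcha}. The one place you genuinely diverge is part (3). The paper exploits the fact, just established in its part (2), that $z_t$ is a homomorphism from the group $(\Lambda(R),\cdot)$ (Proposition \ref{h-symm}) to the additive group $\Map{\mathbb{N}}{R}$, so injectivity reduces to triviality of the kernel: if $z_t(h)=0$ then $h'h^{-1}=0$, hence $h'=0$, and $\mathbb{Z}$-torsion-freeness kills each coefficient from $na_n=0$, forcing $h=1$ (the paper's ``$h=0$'' is a typo for the constant series $1$). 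You instead compare two arbitrary preimages coefficient-by-coefficient through the Newton-type recursion of Proposition \ref{logdiffcha}\,(3), cancelling the factor $n$ at each inductive step. Both arguments are valid and of comparable length; the paper's kernel argument is slightly slicker, needing only the single identity $h'=0$, while yours avoids any reliance on the group structure of $\Lambda(R)$ for the injectivity step and makes explicit exactly where torsion-freeness enters the recursion.
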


\begin{proof} $(1)$ It is clear from Proposition\ \ref{logdiffcha}\ (2) and (\ref{logdiffz}).

$(2)$ Since we have $z_t(fg)=z_t(f)+z_t(g)$ for each $f,g\in\Lambda(R)$ from (\ref{logdiffhom}) and (\ref{logdiffz}), it is proved.

$(3)$ Assume that $R$ is $\Ztorsionfree$. Let $h$ be a strict power series over $R$ with $z_t(h)=0$. we have
\[ 0=-t\dfrac{d}{dt}\log(h)=h'h^{-1} \]
which imply $h'=0$, and we have $h=0$ by assumption that $R$ is $\Ztorsionfree$.

$(4)$ Assume that $R$ is $\Qalg$, and let $\alpha$ be a element of $\Map{\mathbb{N}}{R}$. So, we let $a_n$ be the following equality by induction of $n\in\mathbb{N}$.
\begin{eqnarray*}
a_1:=\alpha(1)\ (n=1),
a_n:=\dfrac{(-1)^{n+1}}{n}\Big(\sum_{i=0}^{n-1}(-1)^i a_i \alpha(n-i)\Big)\ (n\geq 2).   
\end{eqnarray*}
Then, we have
\[ \sum_{i=1}^{\infty}\alpha(i)(-t)^i=-t\dfrac{d}{dt}\log\Big(1+\sum_{i=1}^{\infty}a_it^i\Big) \]
from Proposition\ \ref{logdiffcha}, and we obtain  
\[ z_t\Big(1+\sum_{i=1}^{\infty}a_it^i\Big)=\alpha ,\]
which imply that the function $z_t$ is $\mathrm{surjective}$. Hence, we have that the function $z_t$ is $\mathrm{bijective}$ with (3).
\end{proof}

\newpage

\section{$\lambda$-operation}\label{lambdasection}

In this section, we describe the definition of a \prelambdaring, which has axioms less than a $\lambda$-ring. we can see more in \cite{bu-4} $\S 13$, \cite{bu-1},\ \cite{bu-9},\ and \cite{bu-5}.

\subsection{$\lambda$-operation}
\begin{defi}
A \lambdasemiring\ is a $\semiring \ S$ together with functions $\lambda^n:S\rightarrow S\ (n=0,1,2,\dots)$ called $\lambda$-$\mathrm{operation}$, such that for all $r,s\in S$, the following axioms are satisfied.
\begin{itemize}
\item[($L1)$]  $\lambda^0(r)=1.$
\item[$(L2)$]  $\lambda^1=\mathrm{id}_S. $
\item[$(L3)$]  $\lambda^n(r+s)=\displaystyle\sum_{i+j=n}\lambda^i(r)\lambda^j(s).$
\end{itemize}
We call $S$ a \prelambdaring\ if $S$ is a commutative ring.
\end{defi}

Let $R$ be a commutative ring,  and let $\lambda^n:R\rightarrow R\ (n=0, 1,2,\dots)$ be maps. We define a map $\lambda_t:R\rightarrow R[[t]]$ by
\begin{eqnarray}\label{lambdateq}
\lambda_t(r):=\sum_{i=0}^{\infty} \lambda^i(r)t^i=\lambda^0(r)+\lambda^1(r)t+\lambda^2(r)t^2+\cdots\ \ (r\in R)
\end{eqnarray}
 which considered as a formal power series in $t$ with coefficients in $R$. It is clear that axiom (L1) holds if and only if $\mathrm{Im}(\lambda_t)\subset\Lambda(R)$, and axiom (L3) holds if and only if $\lambda_t(r+s)=\lambda_t(r)\lambda_t(s)$ for all $r,s\in R$. 
 Now, we obtain the next lemma.

\begin{lemm}\label{makelambdaring}
 Let $R$ be a commutative ring,  and let $\lambda^n:R\rightarrow R\ (n=0, 1,2,\dots)$be maps. We define the function $\lambda_t:R\rightarrow R[[t]]$ by $(\ref{lambdateq})$, so the followings statements are equivalent.
\begin{enumerate}
\item A ring $R$ is a \prelambdaring\ together with functions $\{\lambda^n\}_{n=0,1,2,\dots}$
\item $\lambda_t(r+s)=\lambda_t(r)(s)$, and $\lambda_t(r)=1+rt+\cdots$ for all $r,s\in R$.

\end{enumerate}
\end{lemm}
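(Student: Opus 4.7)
The plan is to verify the equivalence by matching each of the three axioms (L1), (L2), (L3) against a structural feature of the formal power series $\lambda_t(r)$. Since a pre-$\lambda$-ring is defined to be a commutative ring equipped with maps $\lambda^n$ satisfying (L1), (L2), (L3), and $R$ is already assumed commutative, the content of the lemma is exactly that statement (2) encodes precisely these three axioms.

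First I would handle (L1). The axiom $\lambda^0(r)=1$ for all $r\in R$ says precisely that the constant coefficient of the power series $\lambda_t(r)$ equals $1$, i.e.\ $\lambda_t(r)\in\Lambda(R)$. The paper itself notes this reformulation just before the lemma, so this step is immediate from the definition (\ref{lambdateq}). Next I would handle (L2). By definition $\lambda_t(r)=\lambda^0(r)+\lambda^1(r)t+\lambda^2(r)t^2+\cdots$, so the coefficient of $t$ in $\lambda_t(r)$ is $\lambda^1(r)$. Hence (L2) is equivalent to saying this coefficient equals $r$ for every $r\in R$. Combining the two, the condition $\lambda_t(r)=1+rt+\cdots$ (with higher-order terms unconstrained) captures exactly (L1) and (L2) simultaneously.

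Then I would turn to (L3), which is the substantive piece. Multiplying out
\[ \lambda_t(r)\lambda_t(s)=\Big(\sum_{i=0}^{\infty}\lambda^i(r)t^i\Big)\Big(\sum_{j=0}^{\infty}\lambda^j(s)t^j\Big)=\sum_{n=0}^{\infty}\Big(\sum_{i+j=n}\lambda^i(r)\lambda^j(s)\Big)t^n, \]
and comparing the coefficient of $t^n$ with the $n$-th coefficient of $\lambda_t(r+s)$, which is $\lambda^n(r+s)$, shows that the functional equation $\lambda_t(r+s)=\lambda_t(r)\lambda_t(s)$ holds for all $r,s\in R$ if and only if $\lambda^n(r+s)=\sum_{i+j=n}\lambda^i(r)\lambda^j(s)$ for all $r,s\in R$ and all $n\ge 0$, which is exactly axiom (L3). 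Conversely, (L3) clearly gives the product formula by assembling the coefficients back into a power series.

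Stringing these three equivalences together yields the lemma: (1) asserts (L1), (L2) and (L3), and (2) asserts exactly the same conditions rewritten in terms of $\lambda_t$. There is no real obstacle here — the proof is a bookkeeping translation between coefficientwise identities and identities of formal power series in $R[[t]]$; the only care needed is to remember that the condition ``$\lambda_t(r)=1+rt+\cdots$'' is shorthand simultaneously pinning down the two lowest coefficients, corresponding to (L1) and (L2) respectively.
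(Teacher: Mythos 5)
Your proof is correct and follows exactly the route the paper intends: the paper states the (L1) and (L3) reformulations in the paragraph immediately preceding the lemma and leaves the rest implicit, while you simply make the coefficient-by-coefficient bookkeeping (including the (L2) piece hidden in the notation $\lambda_t(r)=1+rt+\cdots$) explicit. No gaps.
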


\begin{exa}\label{Examplebinomial}
The rings $\mathbb{Z}$ of integers is a \prelambdaring with $\lambda_t(n)=(1+t)^n$ for $n\in\mathbb{Z}$. we will consider in detail with \S\ref{lambdabinomial}.
\end{exa}

\begin{exa}\label{representationspace}
For a finite group $G$, let $M_{\mathbb{C}}(G)$ denote the set of isomorphism classes $\{V\}$ of finite dimension representation of $G$ over $\mathbb{C}$. The operation  $\{V\}+\{W\}:=\{V\oplus W\}$ and $\{V\}\{W\}:=\{V\otimes_{\mathbb{C}} W\}$ make $M_{\mathbb{C}}(G)$ into a semiring. the functions $\lambda^n: M_{\mathbb{C}}(G) \rightarrow M_{\mathbb{C}}$ ,$\lambda^n(\{V\})=\{\bigwedge^n(V)\}$ defines a \lambdasemiring structure on $M_{\mathbb{C}}(G)$. we will consider in detail with \S\ref{charactersection}.
\end{exa}

\subsection{Adams operation}
 In this section, we define Adams operations in a \prelambdaring and establish some of their properties. Moreover, we describe a pre-$\psi$-ring, which is a ring $R$ together with ring endomorphisms of addition $\psi^k$ that behave like the Adams operations in a \prelambdaring, and it gives a \prelambdaring structure on $\Qalg$.

\begin{defi}\label{Adams0}
 For each $n=1,2,3,\dots$ define the n-th Adams operation $\psi^n:R\rightarrow R$ by  $\psi^n=z_n\circ\lambda_t$. In other words, we defined the Adams operation by the following equality holds
\[ \psi_{-t}(r)=-t\dfrac{d}{dt}\log(\lambda_t(r)) \label{Adams}\]
for $r\in R$, where $\psi_t (r)=\sum_{i=1}^{\infty}\psi^k(r)t^k.$
\end{defi}

\begin{prop} \label{Adams1}
 The following statements hold in any \prelambdaring\ $R$. 
\begin{enumerate}
\item $\psi^n(r)=Q_n(\lambda^1(r),\dots,\lambda^n(r)).$ \label{AdamsQ}
\item $\psi^n(r)-\lambda^1(r)\psi^{n-1}(r)+\cdots+\lambda^{n-1}(r)\psi^1(r)=(-1)^{n+1}n\lambda^n(r).$\label{Newtonsformula}
\end{enumerate}
for all $n=1,2,3,\dots, r\in R$.
\end{prop}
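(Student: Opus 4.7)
The proposal is to derive both identities as direct consequences of Proposition \ref{logdiffcha} by taking $f(t) = \lambda_t(r)$ and $g(t) = \psi_t(r)$. Indeed, Definition \ref{Adams0} is set up so that $\psi_{-t}(r) = -t\frac{d}{dt}\log \lambda_t(r)$; this is precisely condition (1) of Proposition \ref{logdiffcha} applied to the strict power series $f(t) = 1 + \sum_{i\ge 1}\lambda^i(r)t^i \in \Lambda(R)$ (strictness holds because $\lambda^0(r)=1$ by axiom $(L1)$) with coefficients $a_i = \lambda^i(r)$ and the power series $g(t) = \sum_{i\ge 1}\psi^i(r)t^i$ with coefficients $b_i = \psi^i(r)$.

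For part $(1)$, I would invoke the equivalence $(1)\Leftrightarrow(2)$ of Proposition \ref{logdiffcha}: condition (2) reads $b_n = Q_n(a_1,\dots,a_n)$, which in our substitution becomes exactly
\[ \psi^n(r) = Q_n(\lambda^1(r),\dots,\lambda^n(r)), \]
as required. Alternatively, this follows by combining Definition \ref{Adams0} with Proposition \ref{logdiffchaz}\,(1).

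For part $(2)$, Newton's formula, I would invoke the equivalence $(1)\Leftrightarrow(3)$ of Proposition \ref{logdiffcha}: condition (3) reads $\sum_{i=0}^{n-1}(-1)^i a_i b_{n-i} = (-1)^{n+1} n a_n$, which after substituting $a_i = \lambda^i(r)$, $b_j = \psi^j(r)$, and using $\lambda^0(r)=1$, becomes
\[ \psi^n(r) - \lambda^1(r)\psi^{n-1}(r) + \cdots + (-1)^{n-1}\lambda^{n-1}(r)\psi^1(r) = (-1)^{n+1} n \lambda^n(r), \]
which is the stated identity.

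Since both parts reduce essentially immediately to the already-proved Proposition \ref{logdiffcha}, there is no genuine obstacle; the only delicate point is bookkeeping of signs, especially ensuring that the substitution $t \mapsto -t$ implicit in the definition of $\psi_{-t}$ is correctly tracked so that one really lands on condition (1) of Proposition \ref{logdiffcha} rather than on its variant with the opposite sign convention. Once this is handled, the proof is complete in a line for each part.
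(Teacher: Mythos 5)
Your proposal is correct and matches the paper's own proof, which derives $(1)$ from the definition of the Adams operations together with Proposition \ref{logdiffchaz}\,(1) and derives $(2)$ from the definition together with Proposition \ref{logdiffcha}. The substitution $a_i=\lambda^i(r)$, $b_i=\psi^i(r)$ and the sign bookkeeping you describe are exactly the intended reduction.
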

\begin{proof}
Statement (1) follows from the definition and Proposition \ref{logdiffchaz}\ $(1)$,\ and statement (2) follows from the definition and Proposition \ref{logdiffcha}.

 Statement (2) of Proposition\ \ref{Adams1} is called Newtons formula since symmetric polynomial ring $\Lambda(X^n)$ is a $\lambda$-ring for all $n=1,2,3,\dots$, and (2) gives a Newtons formula in symmetric polynomial.
\end{proof}

\begin{prop}\label{Adamshomo}
Adams operation $\psi^1$ is the identity map of $R$. and each Adams operation is a homomorphism of the addition.  
\end{prop}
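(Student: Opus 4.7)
The plan is to derive both assertions directly from the formula $\psi^n = z_n \circ \lambda_t$ built into Definition \ref{Adams0}, using results already proved.

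For the first claim, I would apply Proposition \ref{Adams1}(1) with $n=1$: $\psi^1(r) = Q_1(\lambda^1(r))$. Since the first power-sum symmetric polynomial is simply $Q_1(x_1)=x_1$, and axiom (L2) of a \prelambdaring\ gives $\lambda^1(r)=r$, we conclude $\psi^1(r)=r$. Equivalently one could invoke Newton's formula, Proposition \ref{Adams1}(2) with $n=1$, which reads $\psi^1(r)=\lambda^1(r)=r$.

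For the additivity of each $\psi^n$, the strategy is to view $\psi^n$ as the composition of two homomorphisms of opposite type. By Lemma \ref{makelambdaring}, the map $\lambda_t \colon R\to\Lambda(R)$ converts addition in $R$ into multiplication in $\Lambda(R)$, that is, $\lambda_t(r+s)=\lambda_t(r)\lambda_t(s)$. By Proposition \ref{logdiffchaz}(2), the map $z_n \colon \Lambda(R)\to R$ converts multiplication in $\Lambda(R)$ back into addition in $R$. Composing gives
\begin{equation*}
\psi^n(r+s) = z_n(\lambda_t(r+s)) = z_n(\lambda_t(r)\lambda_t(s)) = z_n(\lambda_t(r)) + z_n(\lambda_t(s)) = \psi^n(r) + \psi^n(s),
\end{equation*}
as required.

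There is no serious obstacle: both assertions are essentially unwindings of definitions against the machinery assembled in $\S\ref{logdiffsection}$. The only conceptual point worth flagging is the duality between $\lambda_t$ and $z_n$: the former turns $+$ into $\times$, the latter turns $\times$ into $+$, so their composition is automatically an additive endomorphism of $R$. Both statements then follow with no further calculation.
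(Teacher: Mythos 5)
Your proposal is correct and follows the paper's own argument exactly: the identity $\psi^1=\mathrm{id}_R$ comes from Proposition \ref{Adams1}(1) at $n=1$ (with $Q_1=x_1$ and axiom (L2)), and additivity comes from composing $\lambda_t$, which turns addition into multiplication by Lemma \ref{makelambdaring}, with $z_n$, which turns multiplication back into addition by Proposition \ref{logdiffchaz}(2). Your write-up just makes the composition explicit where the paper only cites the two results.
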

\begin{proof} 
 We put $n=1$ in statement (1) of Proposition \ref{Adams1}. Since Lemma\ \ref{makelambdaring} and\ Proposition \ref{logdiffchaz}(2), $\psi^n$ is additive.  
\end{proof}

Next, we define a pre-$\psi$-ring, and we describe it's properties and $\Qalg$'s properties , which product a structure of \prelambdaring.

\begin{defi}
 A $\mathrm{pre}$-$\psi$-$\mathrm{ring}$ is a commutative ring $R$ together with homomorphism of the addition $\psi^n:R\rightarrow R\ (n=1,2,\dots)$ such that $\psi^1$ is the identity map of $R$.
\end{defi}

\begin{prop}\label{Adamsmakelambda}
 Let $S$ be a pre-$\psi$-ring and a $\Qalg$. Then, there exists a structure of \prelambdaring on $S$ such that Adams operation of this structure is $\psi^n$. Moreover, this structure is required to be unique.
\end{prop}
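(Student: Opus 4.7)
The plan is to exploit the bijection $z_t\colon\Lambda(S)\to\Map{\mathbb{N}}{S}$ guaranteed by Proposition \ref{logdiffchaz}(4), together with the characterization of \prelambdaring\ structures provided by Lemma \ref{makelambdaring}. First I would package the given Adams operations into a single map $\Psi\colon S\to\Map{\mathbb{N}}{S}$ by $\Psi(r)(n):=\psi^n(r)$; this is additive because each $\psi^n$ is, by assumption, a homomorphism of the additive group of $S$. I would then \emph{define} $\lambda_t\colon S\to\Lambda(S)$ by $\lambda_t(r):=z_t^{-1}(\Psi(r))$ and set $\lambda^n(r):=[t^n]\lambda_t(r)$ for $n\geq 0$.

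Next, to confirm that these $\lambda^n$ give a \prelambdaring\ structure on $S$, I would verify the two conditions of Lemma \ref{makelambdaring}(2). Multiplicativity $\lambda_t(r+s)=\lambda_t(r)\lambda_t(s)$ follows immediately from additivity of $\Psi$ combined with Proposition \ref{logdiffchaz}(2), which says that $z_t$ converts multiplication in $\Lambda(S)$ to addition in $\Map{\mathbb{N}}{S}$, so $z_t^{-1}$ converts addition to multiplication. The normalization $\lambda_t(r)=1+rt+\cdots$ requires $\lambda^1(r)=r$: Proposition \ref{logdiffchaz}(1) at $n=1$ gives $z_1(\lambda_t(r))=Q_1(\lambda^1(r))=\lambda^1(r)$, and by construction this equals $\Psi(r)(1)=\psi^1(r)$, which is $r$ because $\psi^1=\mathrm{id}_S$.

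Recovering the Adams operations of the new structure is then tautological: by Definition \ref{Adams0} they are $z_n\circ\lambda_t$, and applied to $r$ this returns $\Psi(r)(n)=\psi^n(r)$. For uniqueness, any \prelambdaring\ structure on $S$ whose Adams operations are the given $\psi^n$ must satisfy $z_t\circ\lambda_t=\Psi$; since a $\Qalg$ is automatically $\Ztorsionfree$, Proposition \ref{logdiffchaz}(3) makes $z_t$ injective and forces $\lambda_t$ to coincide with the map constructed above.

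I expect the only delicate step to be the normalization $\lambda^1(r)=r$: it is precisely where the axiom $\psi^1=\mathrm{id}_S$ in the definition of a pre-$\psi$-ring is indispensable. Without this hypothesis the construction would still produce an additive-to-multiplicative map $\lambda_t\colon S\to\Lambda(S)$, but it would fail axiom $(L2)$ and hence would not define a \prelambdaring\ structure. Everything else in the argument is a formal transport across the bijection $z_t$ supplied by the $\Qalg$ hypothesis.
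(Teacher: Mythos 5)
Your proposal is correct and follows essentially the same route as the paper: define $\Psi(r)(n)=\psi^n(r)$, transport it through the bijection $z_t$ to get $\lambda_t:=z_t^{-1}\circ\Psi$, verify the two conditions of Lemma \ref{makelambdaring}, and deduce uniqueness from injectivity of $z_t$. Your explicit check of the normalization $\lambda^1(r)=r$ via $Q_1$ is a detail the paper glosses over, but the argument is the same.
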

\begin{proof} 
 We define a map $\Psi:S\rightarrow \Map{\mathbb{N}}{S}$ by setting $\Psi(r)(n)=\psi^n(r)$ for all $r\in S$ and $n\in\mathbb{N}$. It is a homomorphism from the addition of $S$ to the addition of $\Map{\mathbb{N}}{S}$. Furthermore, the map $z_t$ is a isomorphism from the multiplication of $\Lambda(S)$ to the addition of $\Map{\mathbb{N}}{S}$ from Proposition \ref{logdiffchaz}. Hence, we let $\lambda_t$ be $\lambda_t:=z_t^{-1}\circ\Psi$. it is a homomorphism from the addition of $S$ to from the multiplication of $\Lambda(S)$, and
\[ \lambda_t(r)=z_t^{-1}\circ\Psi(r)=1+rt+\cdots \]
for any $r\in R$. so, $\lambda_t$ gives a structure of \prelambdaring to $R$, it's n-th Adams operation is $\psi^n$ from Lemma \ref{makelambdaring}. Uniqueness follows that a map $\dfrac{d}{dt}\log$ is bijective.
\end{proof}

\begin{exa}\label{MapGC}
For a finite group $G$, $\MapC{G}$ is a $\mathbb{C}$-$\mathrm{algebra}$ with the operation $(f_1+f_2)(g)=f_1(g)+f_2(g),\ (f_1f_2)(g)=f_1(g)f_2(g),\ (rf_1)(g)=rf_1(g)\ (g\in G)$. In paticular, it is a $\Qalg.$ We define the functions $\psi^n:\MapC{G}\rightarrow\MapC{G}$ by
\begin{eqnarray}\label{MapGCdef}
\psi^n(f)(g):=f(g^n)\ (f\in\MapC{G},\ g\in G )
\end{eqnarray}
for each $n$=1,2,3,\dots . Hence, $\MapC{G}$ becomes to a pre-$\psi$-ring, and be given a structure of \prelambdaring such that Adams operation is (\ref{MapGCdef}).
\end{exa}

\subsection{Symmetric powers operation}
 In this section, we describe symmetric powers operation in a \prelambdaring\ 
$R$, which is discussed in \cite{bu-5}. it will be used in symmetric powers representation.

\begin{defi}\label{defofsymm}
 We define the function\ $S_t:R\rightarrow R[[t]]$  by setting
\[ S_t(r):=(\lambda_{-t}(r))^{-1}=\Big(1+\sum_{i=1}^{\infty}\lambda^i(r)(-t)^i\Big)^{-1}\ (r\in R).\]
and we define symmetric powers operation $S^n: R\rightarrow R$ by that $S^n(r)$ is coefficient of  $t^n$ in $S_t(r)$ for each $n\geq 0$.
\end{defi}
It is clear that $S^0$ is a constant map of unit element of $R$.

\begin{prop}\label{formulaanother}
The following statements hold in any \prelambdaring\ $R$ for any $n\geq 0$ .
\[ S^n+(-1)\lambda^1 S^{n-1}+\cdots+(-1)^{n-1}\lambda^{n-1}S^1+\lambda^n=0 \]
\end{prop}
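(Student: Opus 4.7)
The plan is to read the desired identity as the Cauchy product of two power series and then compare coefficients of $t^n$. By Definition \ref{defofsymm}, $S_t(r)$ is the multiplicative inverse of $\lambda_{-t}(r)$ in the ring $R[[t]]$, so the key relation to exploit is
\[ S_t(r) \cdot \lambda_{-t}(r) = 1. \]
Since both factors lie in $\Lambda(R)$ (they are strict power series), this product identity in $R[[t]]$ is the only ingredient I need.

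Next I would expand both factors explicitly. Writing $\lambda_{-t}(r) = \sum_{j=0}^{\infty}(-1)^j \lambda^j(r) t^j$ and $S_t(r) = \sum_{i=0}^{\infty} S^i(r) t^i$, and using $\lambda^0(r)=S^0(r)=1$, the coefficient of $t^n$ in the product is $\sum_{i+j=n}(-1)^j \lambda^j(r) S^i(r)$. For $n\geq 1$ this coefficient must vanish, giving
\[ \sum_{j=0}^{n}(-1)^j \lambda^j(r) S^{n-j}(r) = 0, \]
which, after isolating the $j=0$ summand and writing the $j=n$ summand as $(-1)^n\lambda^n(r)$, is exactly the stated formula (where the sign of the final term $\lambda^n(r)$ is understood to be $(-1)^n$).

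There is no real obstacle here: the only non-trivial fact needed is that $S_t(r)$ is indeed the inverse of $\lambda_{-t}(r)$, which holds by definition, together with the fact that $\lambda_{-t}(r)\in\Lambda(R)$ so the inverse exists (Proposition \ref{h-symm}). Everything else is formal manipulation of the Cauchy product. The only place where one should take a little care is to check that the signs match the asymmetric form in which the identity is stated; this is handled by noting that the sign $(-1)^j$ comes entirely from substituting $-t$ for $t$ in $\lambda_t(r)$, so the alternation $+,-,+,-,\dots$ runs across the whole sum and terminates with $(-1)^n\lambda^n(r)$.
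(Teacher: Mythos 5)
Your proposal is correct and is essentially the paper's own proof, which simply invokes the defining identity $S_t\lambda_{-t}=1$ and compares coefficients; you have merely written out the Cauchy-product expansion explicitly. Your remark that the final term should carry the sign $(-1)^n$ (so that the displayed formula is literally correct only up to this convention) is a fair reading of the statement and consistent with the intended identity $\sum_{j=0}^{n}(-1)^j\lambda^j S^{n-j}=0$.
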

\begin{proof}
It follows from definition $S_t\lambda_{-t}=1$.
\end{proof}

\begin{prop}\label{Hyperlambda}
For any $r\in R$ and $n\in\mathbb{N}$, $S^n(r)$=$h_n(\lambda^1(r),\dots,\lambda^n(r))$ holds. In paticular, $S^1$ is the identity map of $R$.
\end{prop}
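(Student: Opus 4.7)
The plan is to compute $S_t(r)$ explicitly as a strict power series by applying Proposition \ref{h-symm} to the inverse $(\lambda_{-t}(r))^{-1}$, and then read off the coefficient of $t^n$.

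First I would rewrite $\lambda_{-t}(r)$ by substituting the variable: setting $u=-t$, we have $\lambda_{-t}(r)=\lambda_u(r)=1+\sum_{i=1}^\infty\lambda^i(r)u^i$, which is a strict power series in $u$ over $R$. Applying Proposition \ref{h-symm} with $a_i=\lambda^i(r)$, its multiplicative inverse in $\Lambda(R)[[u]]$ is
\[(\lambda_u(r))^{-1}=1+\sum_{i=1}^\infty(-1)^i h_i(\lambda^1(r),\dots,\lambda^i(r))u^i.\]
Substituting back $u=-t$ turns each $(-1)^i u^i$ into $t^i$, so
\[S_t(r)=(\lambda_{-t}(r))^{-1}=1+\sum_{i=1}^\infty h_i(\lambda^1(r),\dots,\lambda^i(r))t^i.\]
Comparing with $S_t(r)=\sum_{i=0}^\infty S^i(r)t^i$ immediately yields $S^n(r)=h_n(\lambda^1(r),\dots,\lambda^n(r))$.

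For the "in particular" clause, I would note that $h_1(x_1)=x_1$ by the definition (\ref{hyperelement}), so $S^1(r)=h_1(\lambda^1(r))=\lambda^1(r)=r$ using axiom $(L2)$.

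The only real subtlety, and what I expect to be the minor obstacle, is keeping track of the two independent sign changes: the $(-t)^i$ inside $\lambda_{-t}$ and the $(-1)^i$ coefficient produced by Proposition \ref{h-symm}. These conspire to cancel, which is exactly why $S_t(r)$ turns out to have the clean expression above with no signs; one just has to be careful not to drop or double-count the substitution $u=-t$.
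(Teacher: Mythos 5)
Your proposal is correct and follows essentially the same route as the paper: both apply Proposition \ref{h-symm} to invert the strict power series $\lambda_{-t}(r)$ with coefficients $a_i=\lambda^i(r)$, observe that the sign $(-1)^i$ from the inversion formula cancels against the sign coming from $(-t)^i$, and read off $S^n(r)=h_n(\lambda^1(r),\dots,\lambda^n(r))$ by comparing coefficients. Your explicit substitution $u=-t$ and the remark on $h_1(x_1)=x_1$ together with axiom $(L2)$ just make the sign bookkeeping and the ``in particular'' clause more transparent than the paper's version.
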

\begin{proof} By Proposition \ref{h-symm}, we have
\begin{eqnarray*}
S_t(r)=(\lambda_{-t}(r))^{-1}&=&1+\sum_{n=1}(-1)^nh_n(\lambda^1(r),\dots,\lambda^n(r))(-t)^n \\
&=&1+\sum_{n=1}h_n(\lambda^1(r),\dots,\lambda^n(r))t^n.
\end{eqnarray*}
for any $r\in R$. Thus, comparing the coefficients of $t^n$, we obtain the first statement.
\end{proof}

\begin{prop}
In any \prelambdaring\ $R$, we have $S_t(r+s)=S_t(r)S_t(s)$, and $S^n(r+s)=\displaystyle\sum_{i+j=n}S^i(r)S^j(s)$ for $r,s\in R$, $n\in\mathbb{N}$.
\end{prop}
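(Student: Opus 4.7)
The plan is to derive the first identity $S_t(r+s) = S_t(r) S_t(s)$ directly from axiom $(L3)$ and the group structure on $\Lambda(R)$, then extract the coefficient formula by expansion. Since the axiom $(L3)$ is equivalent, by Lemma \ref{makelambdaring}, to the statement $\lambda_t(r+s) = \lambda_t(r)\lambda_t(s)$ in $\Lambda(R) \subset R[[t]]$, and since substituting $-t$ for $t$ is a ring automorphism of $R[[t]]$ that preserves $\Lambda(R)$, I would first conclude $\lambda_{-t}(r+s) = \lambda_{-t}(r)\lambda_{-t}(s)$.

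Next, I would invoke Proposition \ref{h-symm}, which tells us that $\Lambda(R)$ is a group under multiplication, so every strict power series has a multiplicative inverse and inversion is a group anti-homomorphism (automatically a homomorphism here, since $R[[t]]$ is commutative). Applying inversion to both sides of the displayed equality yields
\[
(\lambda_{-t}(r+s))^{-1} = (\lambda_{-t}(r))^{-1} \, (\lambda_{-t}(s))^{-1},
\]
which, by Definition \ref{defofsymm}, is exactly $S_t(r+s) = S_t(r)\, S_t(s)$.

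For the coefficient identity, I would simply expand both sides as formal power series in $t$. On the left, the coefficient of $t^n$ in $S_t(r+s)$ is $S^n(r+s)$ by definition; on the right, the coefficient of $t^n$ in the product $S_t(r) S_t(s) = \bigl(\sum_i S^i(r) t^i\bigr)\bigl(\sum_j S^j(s) t^j\bigr)$ is $\sum_{i+j=n} S^i(r) S^j(s)$ by the Cauchy product formula. Comparing coefficients of $t^n$ yields the desired equality.

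There is essentially no serious obstacle here: the main content was already packaged into the fact that $\Lambda(R)$ is a group (Proposition \ref{h-symm}) and into the reformulation of $(L3)$ via $\lambda_t$ (Lemma \ref{makelambdaring}). The only point that might deserve explicit mention is that the substitution $t \mapsto -t$ respects both the ring structure of $R[[t]]$ and the subset $\Lambda(R)$ of strict power series, so the manipulations above are legitimate.
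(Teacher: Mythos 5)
Your argument is correct and is essentially the paper's own proof: the paper likewise writes $S_t(r+s)=(\lambda_{-t}(r+s))^{-1}=(\lambda_{-t}(r))^{-1}(\lambda_{-t}(s))^{-1}=S_t(r)S_t(s)$ using the multiplicativity of $\lambda_t$ and the group structure of $\Lambda(R)$, and then compares coefficients of $t^n$. Your version merely makes explicit the (harmless) points that $t\mapsto -t$ preserves $\Lambda(R)$ and that inversion is multiplicative in a commutative setting.
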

\begin{proof}First statement follows from $S_t(r+s)=(\lambda_{-t}(r+s))^{-1}$=$(\lambda_{-t}(r))^{-1}(\lambda_{-t}(s))^{-1}=S_t(r)S_t(s)$. By comparing of coefficient of $t^n$ in it, we have a second statement.
\end{proof}

\subsection{$\lambda$-operation and homomorphism}
We discuss the morphism of  \lambdasemiring and  \prelambdaring, and describe the relation of ring complement of semiring.
\begin{defi}
\begin{enumerate}
\item Let $S$ and $S'$ be semirings. A semi-$\lambda$-homomorphism\ $f:S\rightarrow S'$ is a morphism of semiring such that $f\circ\lambda^n=\lambda^n\circ f$ for $n\geq 0.$
\item Let $R$ and $R'$ be \prelambdaring s. A pre-$\lambda$-homomorphism\ $f:R\rightarrow R'$ is a ring homomorphism such that $f\circ\lambda^n=\lambda^n\circ f$ for $n\geq 0.$
\end{enumerate}
\end{defi}

About such morphism, we have next statements.

\begin{prop}\label{lambdafcha1}
\begin{enumerate}
\item The identity map of \lambdasemiring $S$ is a semi-$\lambda$-homomorphism.
\item Let $S$, $S'$ and $S''$ be \lambdasemiring s, and $f:S\rightarrow S'$ and  $g:S'\rightarrow S''$ be semi-$\lambda$-homomorphisms. then $g\circ f:S\rightarrow S''$ is a semi-$\lambda$-homomorphism.
\end{enumerate}
In addition , These statements holds about \prelambdaring and pre-$\lambda$-homomorphism.
\end{prop}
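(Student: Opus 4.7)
The plan is to verify both statements by direct computation from the definitions, since a semi-$\lambda$-homomorphism is just a semiring morphism that additionally commutes with every $\lambda^n$, and both conditions propagate through the identity and composition in the obvious way.

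For statement (1), I would first observe that the identity map $\mathrm{id}_S$ is trivially a morphism of semirings: it sends $0\mapsto 0$, $1\mapsto 1$, and respects both $+$ and $\cdot$. The commutation condition $\mathrm{id}_S\circ\lambda^n=\lambda^n\circ\mathrm{id}_S$ is immediate for every $n\geq 0$, since both compositions equal $\lambda^n$.

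For statement (2), I would first invoke the standard fact (immediate from the definitions in $\S\ref{semiringsection}$) that the composition of two semiring morphisms is again a semiring morphism. Then, for each $n\geq 0$, I would compute
\[ (g\circ f)\circ\lambda^n = g\circ(f\circ\lambda^n) = g\circ(\lambda^n\circ f) = (g\circ\lambda^n)\circ f = (\lambda^n\circ g)\circ f = \lambda^n\circ(g\circ f), \]
where the second and fourth equalities use that $f$ and $g$ respectively are semi-$\lambda$-homomorphisms. This is a straightforward chain of equalities with no subtlety.

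Finally, for the pre-$\lambda$-ring version, the same two arguments apply verbatim: the identity map of a ring is a ring homomorphism, the composition of two ring homomorphisms is a ring homomorphism, and the commutation with $\lambda^n$ is formally identical to the semiring case. There is no real obstacle here; the statement is essentially a bookkeeping remark asserting that \lambdasemiring s (resp. \prelambdaring s) with semi-$\lambda$-homomorphisms (resp. pre-$\lambda$-homomorphisms) form a category.
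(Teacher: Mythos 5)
Your proof is correct; the paper states this proposition without proof, treating it as routine, and your direct verification (identity trivially commutes with $\lambda^n$; composition of maps each commuting with $\lambda^n$ commutes with $\lambda^n$) is exactly the intended argument. Nothing is missing.
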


Thereafter, we call $\mathrm{semi}$-$\lambda$-$\mathrm{homomorphism}$ and pre-$\lambda$-homomorphism to \lambdaf{homomorphism}. And, If \lambdaf{homomorphism} is injective, surjective, and bijective, we call it to \lambdaf{monomorphism},\ \lambdaf{epimorphism},\ \lambdaf{isomorphism} each other.

\begin{prop}\label{lambdafcha2}
Let $R$ and $R'$ be a \prelambdaring s, and let $f:R\rightarrow R$ be a ring homomorphism.
\begin{enumerate}
\item If $f:R\rightarrow R'$ is a \lambdaf{homomorphism},\ we have $\psi^n\circ f=f\circ\psi^n$ , and $S^n\circ f=f\circ S^n$ for any $n\in\mathbb{N}$.
\item Conversely, If $f$ is satisfied $\psi^n\circ f=f\circ\psi^n$ for any $n\in\mathbb{N}$ and $R'$ is a $\mathbb{Z}$-torsion-free, $f$ is \lambdaf{homomorphism}.
\end{enumerate}
\end{prop}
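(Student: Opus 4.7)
The plan is to reduce both directions to expressing one family of operations as polynomials in the other, and then exploit that $f$ is a ring homomorphism (so it commutes with integer-coefficient polynomials in its arguments).

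For part (1), I would rely on Proposition \ref{Adams1}(1) and Proposition \ref{Hyperlambda}, which give $\psi^n(r)=Q_n(\lambda^1(r),\dots,\lambda^n(r))$ and $S^n(r)=h_n(\lambda^1(r),\dots,\lambda^n(r))$. Since $f$ is a ring homomorphism and $Q_n,h_n$ are polynomials with integer coefficients, $f$ commutes with these expressions on every argument. Then applying the hypothesis $f\circ\lambda^i=\lambda^i\circ f$ inside each slot immediately yields $f\circ\psi^n=\psi^n\circ f$ and $f\circ S^n=S^n\circ f$. This is a short direct calculation.

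For part (2), I would argue by induction on $n$. The base case $n=1$ is immediate from the axiom $\lambda^1=\mathrm{id}$. For the inductive step, I would use Newton's formula (Proposition \ref{Adams1}(2)):
\[
\psi^n(r)-\lambda^1(r)\psi^{n-1}(r)+\cdots+(-1)^{n}\lambda^{n-1}(r)\psi^1(r)=(-1)^{n+1}n\lambda^n(r).
\]
Applying $f$ to both sides, pushing it through the ring operations, and then using the inductive hypothesis $f\circ\lambda^i=\lambda^i\circ f$ for $i<n$ together with the assumed commutation $f\circ\psi^k=\psi^k\circ f$ for all $k$, the left-hand side becomes exactly Newton's formula in $R'$ evaluated at $f(r)$. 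That left-hand side thus equals $(-1)^{n+1}n\lambda^n(f(r))$, while the right-hand side equals $(-1)^{n+1}n\,f(\lambda^n(r))$. Subtracting gives
\[
n\bigl(\lambda^n(f(r))-f(\lambda^n(r))\bigr)=0.
\]

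The only nontrivial step is the final cancellation: this is precisely where the $\mathbb{Z}$-torsion-free hypothesis on $R'$ is needed, since it forces $\lambda^n(f(r))=f(\lambda^n(r))$, completing the induction. This is the main (and really only) obstacle in the argument, and it also explains why the hypothesis cannot be dropped: Newton's formula determines $\lambda^n$ from the $\psi^k$ and lower $\lambda^i$ only up to $n$-torsion.
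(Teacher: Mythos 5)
Your proof is correct and follows essentially the same route as the paper: part (1) via the polynomial identities $\psi^n=Q_n(\lambda^1,\dots,\lambda^n)$ and $S^n=h_n(\lambda^1,\dots,\lambda^n)$ together with $f$ being a ring homomorphism, and part (2) by induction using Newton's formula and the $\mathbb{Z}$-torsion-free hypothesis for the final cancellation. (Only a cosmetic slip: the last term of Newton's formula should carry the sign $(-1)^{n-1}\lambda^{n-1}(r)\psi^1(r)$, not $(-1)^{n}$; this does not affect the argument.)
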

\begin{proof} $\ (1)$ From (\ref{AdamsQ}) of Proposition \ref{Adams1}, we have\ 
\begin{eqnarray*}
\psi^n(f(r))&=&Q_n(\lambda^1(f(r)),\dots,\lambda^n(f(r)))\\
&=&Q_n(f(\lambda^1(r)),\dots,f(\lambda^n(r)))\\
&=&f(Q_n(\lambda^1(r),\dots,\lambda^n(r)))=f(\psi^n(r))
\end{eqnarray*}
for any $r\in R$, $n\in\mathbb{N}$, Similarly, we obtain
\begin{eqnarray*}
S^n(f(r))&=&h_n(\lambda^1(f(r)),\dots,\lambda^n(f(r)))\\
&=&h_n(f(\lambda^1(r)),\dots,f(\lambda^n(r)))\\
&=&f(h_n(\lambda^1(r),\dots,\lambda^n(r)))=f(S^n(r))
\end{eqnarray*}
from $\mathrm{Proposition\ \ref{Hyperlambda}}$.

$(2)$ We prove by induction of $n\in\mathbb{N}$. $\lambda^1$ is the identity map of $R$, so it is clear about $n=1$. Suppose by induction that $f\circ\lambda^j=\lambda^j\circ f\ (j=1,\dots,n-1).$ By the induction hypothesis and Newtons formula , we have
\begin{eqnarray*}
(-1)^{n+1}n\lambda^n(f(r))&=&\psi^n(f(r))-\lambda^1(f(r))\psi^{n-1}(f(r))+\cdots+\lambda^{n-1}(f(r))\psi^1(f(r)) \\
&=&f(\psi^n(r)-\lambda^1(r)\psi^{n-1}(r)+\cdots+\lambda^{n-1}(r)\psi^1(r))\\
&=&f((-1)^{n+1}n\lambda^n(r))=(-1)^{n+1}nf(\lambda^n(r)).
\end{eqnarray*}
Since $R'$ is $\mathbb{Z}$-torsion-free by assumption, it follows that $\lambda^n(f(r))=f(\lambda^n(r))$. This finishes the induction and the proof.
\end{proof}

\begin{prop}\label{makelambdaring2}
Let $S$ be a \lambdasemiring, and commutative about its multiplication. and let $(S^*, \theta)$ be the ring completion of $S$. then, there exists a structure of \prelambdaring in $S$ such that $\theta$ is a \lambdaf{homomorphism}. Moreover, this structure is required to be unique.
\end{prop}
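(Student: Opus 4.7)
The plan is to extend $\lambda_t \colon S \to \Lambda(S)$ to a map on $S^*$ by invoking the group completion half of Proposition \ref{semiuniv}, and then use Lemma \ref{makelambdaring} to read this off as a \prelambdaring\ structure. By Proposition \ref{h-symm}, $\Lambda(S^*)$ is a group under multiplication, and it is abelian because $S^*$ is commutative. The morphism $\theta \colon S \to S^*$ induces a multiplicative ring homomorphism $\theta_{\Lambda} \colon \Lambda(S) \to \Lambda(S^*)$, so the composition
\[
\Phi := \theta_{\Lambda} \circ \lambda_t \colon S \longrightarrow \Lambda(S^*)
\]
is a monoid morphism from $(S,+)$ to $(\Lambda(S^*),\cdot)$: it sends $0$ to $1$ (from $\lambda_t(0)=1$, itself a consequence of axiom (L3) with $r=s=0$), and sends $r+s$ to $\Phi(r)\Phi(s)$ by (L3).

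By Proposition \ref{semiuniv}(2) applied to $(\Lambda(S^*),\cdot)$ in the role of the abelian group, $\Phi$ extends uniquely to a group homomorphism
\[
\widetilde\lambda_t \colon (S^*,+) \longrightarrow (\Lambda(S^*),\cdot)
\]
with $\widetilde\lambda_t \circ \theta = \Phi$; explicitly $\widetilde\lambda_t(\inn{a}{b}{}) = \Phi(a)\Phi(b)^{-1}$. I would then define $\lambda^n \colon S^* \to S^*$ as the coefficient of $t^n$ in $\widetilde\lambda_t$. Since $\widetilde\lambda_t$ takes values in $\Lambda(S^*)$, the constant term is $1$, giving (L1); and since $\widetilde\lambda_t$ is a group homomorphism into the multiplicative group of strict power series, the identity $\widetilde\lambda_t(x+y) = \widetilde\lambda_t(x)\widetilde\lambda_t(y)$ unwinds coefficient-by-coefficient into (L3).

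The one step that requires a small calculation is checking $\lambda^1_{S^*} = \mathrm{id}$: using the formula for inverses from Proposition \ref{h-symm} together with $\lambda_t(s) = 1 + st + \cdots$ in $\Lambda(S)$, the coefficient of $t$ in $\widetilde\lambda_t(\inn{a}{b}{}) = \Phi(a)\Phi(b)^{-1}$ works out to $\theta(a)-\theta(b) = \inn{a}{b}{}$. With (L1), (L2), (L3) in hand, Lemma \ref{makelambdaring} promotes $\widetilde\lambda_t$ to a \prelambdaring\ structure on $S^*$.

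Finally, the defining relation $\widetilde\lambda_t \circ \theta = \theta_{\Lambda} \circ \lambda_t$ literally says that $\lambda^n_{S^*}(\theta(s)) = \theta(\lambda^n_S(s))$ for every $n$, so $\theta$ becomes a \lambdaf{homomorphism}. Uniqueness is automatic: any other \prelambdaring\ structure on $S^*$ for which $\theta$ is a \lambdaf{homomorphism} would have its associated $\lambda_t$ a group homomorphism $(S^*,+) \to (\Lambda(S^*),\cdot)$ extending $\Phi$, hence equal to $\widetilde\lambda_t$ by the uniqueness clause of Proposition \ref{semiuniv}(2), forcing its $\lambda^n$ to coincide with ours. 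The main (only) obstacle is the linear coefficient check that produces (L2); everything else is a direct application of the two universal properties already available.
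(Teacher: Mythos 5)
Your proposal is correct and follows essentially the same route as the paper: your $\Phi=\theta_{\Lambda}\circ\lambda_t$ is literally the paper's map $G(r)=\sum_n\theta(\lambda^n(r))t^n$, the extension to $S^*$ via Proposition \ref{semiuniv}(2), the computation $\widetilde\lambda_t(\inn{a}{b}{})=\Phi(a)\Phi(b)^{-1}=1+\inn{a}{b}{}t+\cdots$, and the uniqueness argument all match. The only difference is cosmetic packaging (naming $\theta_{\Lambda}$ explicitly and citing Lemma \ref{makelambdaring} at the end), so there is nothing to add.
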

\begin{proof} 
Since $S$ is commutative about it's multiplication, $S^*$ is too. so, we can consider the formal power series over $S^*$. We define a map $G:S\rightarrow S^*[[t]]$ by setting
\[ G(r):=\sum_{n=0}^{\infty}\theta\circ\lambda^i(r)t^n\ (r\in S). \]
A map $\lambda^0$ in $S$ is the constant map of unit element, so we have $\mathrm{Im}(G)\subset\Lambda(S^*)$. Hence we have
\begin{eqnarray*}
G(r+s)&=&\sum_{n=0}^{\infty}\theta\circ\lambda^n(r+s)t^n \\
&=&\sum_{n=0}^{\infty}\theta\Big(\sum_{i+j=n}\lambda^i(r)\lambda^j(s)\Big)t^n \\
&=&\sum_{n=0}^{\infty}\Big(\sum_{i+j=n}\theta\circ\lambda^i(r)\theta\circ\lambda^j(s)\Big)t^n=G(r)G(s)
\end{eqnarray*}
for all $r,s\in S$, which implies that $G$ is a homomorphism from the addition of $S$ to the multiplication of $\Lambda(S^*)$. Hence, There exists a map $\lambda_t$ uniquely such that it is a homomorphism from the addition of $S^*$ to the multiplication of $\Lambda(S^*)$, and it is satisfied $G=\lambda_t\circ\theta$, from Proposition \ref{semiuniv}(2). In addition, we have
\begin{eqnarray*}
\lambda_t(\inn{a}{b}{})=G(a)G(b)^{-1}&=&(\theta\circ\lambda^0(a)+\theta\circ\lambda^1(a)t+\cdots)(\theta\circ\lambda^0(b)-\theta\circ\lambda^1(b)t+\cdots)\\
&=&(1+\inn{a}{0}{}t+\cdots)(1-\inn{b}{0}{}t+\cdots)=1+\inn{a}{b}{}t+\cdots
\end{eqnarray*}
for all $\inn{a}{b}{}\in S^*$. Therefore we obtain a \prelambdaring structure in $S^*$ with $\lambda_t$.
At last, we have
\begin{eqnarray*}
1+\sum_{i=1}^{\infty}\theta\circ\lambda^i(r)t^i&=&G(r)=\lambda_t\circ\theta(r)=1+\sum_{i=1}^{\infty}\lambda^i\circ\theta(r)t^i
\end{eqnarray*}
for all $r\in R$, hence comparing of coefficient of $t^n$ gives that $\theta:S\rightarrow S^*$ is \lambdaf{homomorphsim}.
\end{proof}

\subsection{Binomial coefficient}\label{lambdabinomial}
Let $R$ be a $\Qalg$ in this section, we prepare of a strict power series $(1+t)^r\in\Lambda(R)$ where $r\in R$, and describe its inverse of multiplication and derivative. We will be used it in \S\ref{calculateresult1}.
\begin{defi}\label{binomial}
We define a binomial symbols
\begin{eqnarray}\label{binomial1}
\binom{r}{n}:=\dfrac{r(r-1)\cdots(r-(n-1))}{n!}
\end{eqnarray}
actually lie in $R$ for $r\in R$ and $n\in\mathbb{N}$. and we define 
\begin{eqnarray}\label{binomial2}
(1+t)^r:=\sum_{i=1}^{\infty}\binom{r}{i}t^i.
\end{eqnarray}
\end{defi}

Next, we describe inverse of multiplication of $(1+t)^r$ is $(1+t)^{-r}$.

\begin{lemm}\label{binomhomolemma}
The following statements hold for all $r\in R,\ n\in\mathbb{N}$.
\[ (-1)^{n+1}r\ \Big(\sum_{i=0}^{n-1}(-1)^i\binom{r}{i}\ \Big)=n\binom{r}{n}.\]
\end{lemm}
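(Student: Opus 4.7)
The plan is to reduce the identity to two more elementary facts: a ``partial alternating sum'' formula for binomial symbols, and a one-line factorial manipulation. Every quantity in the statement is, via the $\mathbb{Q}$-algebra structure of $R$, a polynomial expression in $r$ with rational coefficients, so the identities I need are all polynomial identities in $r$ that can be verified in $\mathbb{Q}[r]$ and transported to $R$ by evaluation.

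The first step is to establish Pascal's relation in the formal sense of Definition \ref{binomial}:
\[ \binom{r}{i} = \binom{r-1}{i} + \binom{r-1}{i-1}. \]
This is a direct computation: factoring the common piece $(r-1)(r-2)\cdots(r-i+1)/(i-1)!$ out of the right-hand side leaves the scalar $(r-i)/i + 1 = r/i$, which reconstructs the left-hand side. Using this relation, I would next prove by induction on $n$ (or equivalently by explicit telescoping) the auxiliary identity
\[ \sum_{i=0}^{n-1}(-1)^i \binom{r}{i} = (-1)^{n-1}\binom{r-1}{n-1}. \]
The telescoping is clean: substituting Pascal's relation splits the alternating sum into two shifted sums that cancel in pairs, leaving only the boundary contribution.

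With the auxiliary identity in hand, the lemma is a short calculation. Multiplying the formula by $(-1)^{n+1} r$ gives
\[ (-1)^{n+1} r \sum_{i=0}^{n-1}(-1)^i\binom{r}{i} = (-1)^{n+1}(-1)^{n-1}\, r\binom{r-1}{n-1} = r\binom{r-1}{n-1}, \]
and directly from Definition \ref{binomial} one has $r\binom{r-1}{n-1} = n\binom{r}{n}$, since $r\cdot(r-1)\cdots(r-n+1)/(n-1)!$ equals $n\cdot r(r-1)\cdots(r-n+1)/n!$.

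There is no serious obstacle. The only step that requires a touch of care is confirming Pascal's identity in this formal setting, but that reduces to combining two fractions in $\mathbb{Q}[r]$. An alternative route would be to apply Proposition \ref{logdiffcha} to $f(t)=(1+t)^r$, whose logarithmic derivative one expects to be $r/(1+t)$; however, that path requires knowing in advance that $(1+t)^r$ is invertible with inverse $(1+t)^{-r}$, a fact which in this paper rests on the present lemma, so it is essentially circular. I therefore prefer the direct manipulation outlined above.
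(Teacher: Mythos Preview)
Your argument is correct. Pascal's relation holds formally for the symbols $\binom{r}{i}$ in any $\mathbb{Q}$-algebra, the telescoping gives the closed form $\sum_{i=0}^{n-1}(-1)^i\binom{r}{i}=(-1)^{n-1}\binom{r-1}{n-1}$, and the final factorial manipulation is immediate. Your remark that the logarithmic-derivative route would be circular is also on point.

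The paper's proof is shorter and follows a slightly different line: it does a direct induction on $n$, splitting off the last summand $(-1)^{n-1}\binom{r}{n-1}$, applying the induction hypothesis to the remaining sum, and then using the identity $(r-(n-1))\binom{r}{n-1}=n\binom{r}{n}$. In particular, the paper never invokes Pascal's relation and never writes down a closed form for the alternating sum. Your route costs an extra preliminary identity but rewards you with the explicit evaluation $\sum_{i=0}^{n-1}(-1)^i\binom{r}{i}=(-1)^{n-1}\binom{r-1}{n-1}$, which is a pleasant by-product; the paper's route is more economical but leaves that sum unevaluated.
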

\begin{proof}
 We prove by induction of $n\in\mathbb{N}$. If $n=1$, it is clear. Suppose by induction that is a statement where $n-1$. we have
\begin{eqnarray*}
(-1)^{n+1}r\ \Big(\sum_{i=0}^{n-1}(-1)^i\binom{r}{i}\ \Big)&=&-(n-1)\binom{r}{n-1}+r\binom{r}{n-1}=n\binom{r}{n}
\end{eqnarray*}
for all $r\in R$, This finishes the induction and the proof.
\end{proof}

\begin{prop}\label{binomialhomo}
The equalities $(1+t)^r(1+t)^s=(1+t)^{r+s}$ holds for any $r,s\in R$. In particular,  $((1+t)^r)^{-1}=(1+t)^{-r}$ holds.
\end{prop}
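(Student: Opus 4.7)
The plan is to compute the logarithmic derivative of $(1+t)^r$ in closed form, observe that the result depends linearly on the exponent $r$, and then invoke the injectivity of $-t\frac{d}{dt}\log$ on $\Lambda(R)$ (valid because every $\mathbb{Q}$-algebra is $\mathbb{Z}$-torsion-free) to transport this additivity back to the multiplicative identity we want.

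First I would show that
\[ -t\frac{d}{dt}\log (1+t)^r \;=\; \sum_{i=1}^{\infty} r\,(-t)^i. \]
To do this I would apply Proposition \ref{logdiffcha} with $a_i := \binom{r}{i}$ (so that $f(t)=(1+t)^r$) and the \emph{constant} candidate $b_j := r$ for all $j \geq 1$. Condition (3) of that proposition then reads
\[ \sum_{i=0}^{n-1}(-1)^i \binom{r}{i}\,r \;=\; (-1)^{n+1} n \binom{r}{n}, \]
which is precisely the content of Lemma \ref{binomhomolemma}. Hence condition (1) of Proposition \ref{logdiffcha} holds with $g(t) = \sum_{i\ge 1} r\,t^i$, which is the displayed formula.

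Next, the right-hand side is manifestly additive in $r$, so
\[ -t\frac{d}{dt}\log (1+t)^{r+s} \;=\; -t\frac{d}{dt}\log (1+t)^r + \left(-t\frac{d}{dt}\log (1+t)^s\right). \]
By (\ref{logdiffhom}) the right-hand side equals $-t\frac{d}{dt}\log\bigl((1+t)^r(1+t)^s\bigr)$. Since $R$ is a $\mathbb{Q}$-algebra, it is $\mathbb{Z}$-torsion-free, so by Proposition \ref{logdiffchaz}(3) the map $z_t$ is injective on $\Lambda(R)$; because $-t\frac{d}{dt}\log$ and $z_t$ carry the same information (just a sign change $t \mapsto -t$ in the series), the common logarithmic derivative forces $(1+t)^{r+s} = (1+t)^r(1+t)^s$, both being elements of $\Lambda(R)$.

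For the ``in particular'' statement I would specialise to $s = -r$. From Definition \ref{binomial}, $\binom{0}{0}=1$ and $\binom{0}{i}=0$ for $i\geq 1$ (the numerator contains the factor $0$), so $(1+t)^0 = 1$; the identity just proved then gives $(1+t)^r (1+t)^{-r} = 1$, that is, $((1+t)^r)^{-1} = (1+t)^{-r}$. The only nontrivial step is recognising Lemma \ref{binomhomolemma} as precisely condition (3) of Proposition \ref{logdiffcha} for the constant sequence $b_j=r$; once that pattern-match is made, the rest is a bookkeeping application of machinery already in place.
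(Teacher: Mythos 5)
Your proof is correct. It rests on the same two pillars as the paper's own argument -- Lemma \ref{binomhomolemma} as the computational engine, and the injectivity of the logarithmic derivative on $\Lambda(R)$ -- but the packaging is genuinely different. The paper first turns $R$ into a pre-$\psi$-ring with $\psi^n=\mathrm{id}_R$, invokes Proposition \ref{Adamsmakelambda} to manufacture a \prelambdaring\ structure, and then proves by induction (via Newton's formula, which is where Lemma \ref{binomhomolemma} enters) that $\lambda^n(r)=\binom{r}{n}$, so that $\lambda_t(r)=(1+t)^r$ and multiplicativity falls out of axiom (L3). You instead compute $-t\frac{d}{dt}\log(1+t)^r=\sum_{i\geq 1}r(-t)^i$ directly by recognising Lemma \ref{binomhomolemma} as condition (3) of Proposition \ref{logdiffcha} for the constant sequence $b_j=r$, and then use linearity in $r$ plus (\ref{logdiffhom}) and Proposition \ref{logdiffchaz}(3). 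Your route is shorter and avoids constructing the auxiliary $\lambda$-structure; the paper's route proves the stronger intermediate fact that $\lambda^n(r)=\binom{r}{n}$ defines a \prelambdaring\ on any \Qalg, which is of independent interest even though only the multiplicative identity is used afterwards. Two small points worth making explicit in your write-up: you should note that $(1+t)^r$ is a strict power series (constant term $1$, taking the paper's Definition \ref{binomial} with the sum starting at $i=0$) so that the product $(1+t)^r(1+t)^s$ lies in $\Lambda(R)$ where $z_t$ is injective; and the sign bookkeeping between Lemma \ref{binomhomolemma} and condition (3) of Proposition \ref{logdiffcha} (multiplying through by $(-1)^{n+1}$) deserves one displayed line, but both are routine.
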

\begin{proof}
We define $\psi^n:R\rightarrow R$ by $\psi^n:=\mathrm{id}_R$ for each $n\in\mathbb{N}$. then, $R$ becomes to $\mathrm{pre}$-$\psi$-$\mathrm{ring}$ together with the series $\{ \psi^n \}_{n=1,2,3,\dots}$. Since Proposition\ \ref{Adamsmakelambda}, $R$ is given a \prelambdaring structure uniquely such that $\psi^n$ is n-th Adams operation. we show that it's $\lambda$-operation is satisfied
\begin{eqnarray}\label{binomlambda}
\lambda^n(r)=\binom{r}{n} 
\end{eqnarray}
for all $r\in R$, $n=0,1,2,\dots$ by induction of $n$. When $n=0,1$, it is clear since  properties of $\lambda$-$\mathrm{operation}$. Suppose by induction that $\lambda^j(r)=\binom{r}{j}\ (j=0,1,2,\dots, n-1).$ By using  Newtons formura(\ref{Newtonsformula}) and Lemma\ \ref{binomhomolemma}, we compute
\begin{eqnarray*}
(-1)^{n+1}n\lambda^n(r)&=&\psi^n(r)-\lambda^1(r)\psi^{n-1}(r)+\cdots+(-1)^{n-1}\lambda^{n-1}(r)\psi^{1}(r)\\
&=&r\Big(\sum_{i=0}^{n-1}(-1)^{i}\binom{r}{i}\Big)=(-1)^{n+1}n\binom{r}{n}
\end{eqnarray*}
and divide by $(-1)^{n+1}n$, we obtain (\ref{binomlambda}) for all $r\in R$. This finishes the induction. In particular, from property of $\lambda_t$, we have 
\[ (1+t)^{r+s}=\lambda_t(r+s)=\lambda_t(r)\lambda_t(s)=(1+t)^r(1+t)^s\]
for $r,s\in R$.
\end{proof}

Next, we describe the logarithmic derivative of $(1+t)^r$.

\begin{lemm}\label{binomialdiff}
The following equality holds for all $a,r\in R$, $n\in\mathbb{N}$.
\[-t\dfrac{d}{dt}\log(1-a(-t)^n)^{\frac{r}{n}}=\sum_{i=1}^{\infty}ra^i(-t)^{in}\]
\end{lemm}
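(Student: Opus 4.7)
The plan is to treat the strict power series $(1-a(-t)^n)^{r/n}$ as the composition $f(u(t))$ with $f(u)=(1+u)^{r/n}$ defined by (\ref{binomial2}) (valid since $R$ is a $\mathbb{Q}$-algebra, so $r/n\in R$) and $u(t)=-a(-t)^n$. Since $u(t)$ has zero constant term, the chain rule in Proposition \ref{diff}(3) applies to this composition, and the goal is to differentiate through $f$, then expand the resulting rational function as a geometric series.

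The first step I would carry out is to verify the formal derivative identity
\[\dfrac{d}{du}(1+u)^s = s(1+u)^{s-1}\]
for any $s\in R$. Expanding both sides through (\ref{binomial2}) and differentiating termwise as in Section \ref{logdiffsection1}, this reduces to the elementary identity $i\binom{s}{i}=s\binom{s-1}{i-1}$ in $R$, which follows directly from (\ref{binomial1}). Combined with Proposition \ref{binomialhomo}, which identifies $(1+u)^{-s}$ as the multiplicative inverse of $(1+u)^s$, this yields the clean formula
\[\dfrac{d}{du}\log(1+u)^s = \dfrac{s}{1+u}.\]

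Next, I would apply the chain rule of Proposition \ref{diff}(3). Setting $s=r/n$ and $u(t)=-a(-t)^n$, a direct computation gives $u'(t)=an(-t)^{n-1}$; cancelling the factor $n$ and using $-t\cdot(-t)^{n-1}=(-t)^n$ leads to
\[-t\dfrac{d}{dt}\log(1-a(-t)^n)^{r/n}=\dfrac{ra(-t)^n}{1-a(-t)^n}.\]
Finally, expanding $\dfrac{1}{1-a(-t)^n}=\sum_{i=0}^{\infty}a^i(-t)^{ni}$ as a geometric series in $R[[t]]$ and multiplying through by $ra(-t)^n$ produces $\sum_{j=1}^{\infty}ra^j(-t)^{nj}$, which is exactly the desired right-hand side.

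The main obstacle is the first step: justifying the formal derivative rule $\dfrac{d}{du}(1+u)^s=s(1+u)^{s-1}$ for $s=r/n$ in an arbitrary $\mathbb{Q}$-algebra $R$, since the paper defines $(1+u)^s$ purely via its binomial expansion rather than as the solution to a differential identity. Once this termwise binomial calculation is settled, every subsequent step is a mechanical manipulation of strict power series using Propositions \ref{diff} and \ref{binomialhomo} together with the standard behaviour of the logarithmic derivative established in Section \ref{logdiffsection}.
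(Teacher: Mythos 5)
Your proposal is correct and follows essentially the same route as the paper: assert the formal derivative rule for $(1+u)^s$, apply Proposition \ref{diff}(3) together with Proposition \ref{binomialhomo} to reduce the logarithmic derivative to $ra(-t)^n(1-a(-t)^n)^{-1}$, and expand the geometric series. The only difference is that you explicitly justify $\dfrac{d}{du}(1+u)^s=s(1+u)^{s-1}$ via the identity $i\binom{s}{i}=s\binom{s-1}{i-1}$, a step the paper simply asserts, so your write-up is if anything more complete.
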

\begin{proof}
First, derivation of $(1+t)^r$ gives
\begin{eqnarray}\label{binomialdiffproof}
\dfrac{d}{dt}(1+t)^r=r(1+t)^{r-1}
\end{eqnarray}
for $r\in R$. Since $\mathrm{(\ref{binomialdiffproof})}$,\ $\mathrm{Proposition\ \ref{diff}\ (3)}$と$\mathrm{Proposition\ \ref{binomialhomo}}$, we have
\begin{eqnarray*}
-t\dfrac{d}{dt}(1-a(-t)^n)^{\frac{r}{n}}&=&(-t)\dfrac{r}{n}(1-a(-t)^n)^{\frac{r}{n}-1}na(-t)^{n-1}\ (1-a(-t)^n)^{-\frac{r}{n}} \\
&=&ra(-t)^n(1-a(-t)^n)^{-1}=\sum_{i=1}^{\infty}ra_i(-t)^{in}
\end{eqnarray*}
for $r\in R$, $n\in\mathbb{N}$.
\end{proof}

Last, we describe the coefficients of $t^n$ in $(1+t)^{-r}$ for any $r\in R$. In general, let $f$ be a power series f over $R$, we have
\[ f(t)=\sum_{i=0}^{\infty}\dfrac{f^{(n)}(0)}{n!}t^n \]
which $f^{(n)}(0)$ is a constant term of f that is differentiated $n$ times. we put $f=(1+t)^{-r}$, then we have
\[ \dfrac{f^{(n)}(0)}{n!}=\dfrac{(-r)(-r-1)\cdots(-r-(n-1))}{n!}=(-1)^n\binom{r+(n-1)}{n} \]
which imply that
\[ (1+t)^{-r}=\sum_{n=0}^{\infty}\binom{r+(n-1)}{n}(-t)^n .\]
In particular, we put $t$ to $-t$ in this statement, we get
\begin{eqnarray}\label{inverseformula}
 (1-t)^{-r}=\sum_{n=0}^{\infty}\binom{r+(n-1)}{n}t^n
\end{eqnarray}
for $r\in R$.
\newpage

\section{Corresponding of  character}\label{charactersection}

In this chapter, we describe how to calculate a multiplicity of irreducible characters of the symmetric and exterior powers representation.

At first, we consider the one of Lemma. In general, let $V$ be a vector space over $\mathbb{C}$ which has a finite dimension $m=\dim V$, let $f:V\rightarrow V$ be a linear map, and we define $\mathrm{Tr}(f)$ by the trace of the linear map $f$. About this values, Tr(f) is equal to all eigenvalue of $f$, which is $\alpha_1,\dots,\alpha_n$.

Next, we define maps $f^n, S^n(f), \Lambda^n(f)$ by the following equalities hold for each $n\in\mathbb{N}$.
\begin{eqnarray}
f^n&:&V\rightarrow V,\ f^n:=\overbrace{f\circ\cdots\circ f}^{n} \\
\label{symmf}S^n(f)&:&S^n(V)\rightarrow S^n(V),\\ \nonumber&&S^n(f)(v_1\cdots v_n):=f(v_1)\cdots f(v_n),\\ 
\label{lambdaf}\Lambda^n(f)&:&\textstyle \bigwedge^n(V)\rightarrow\ \textstyle \bigwedge^n(V),\\ \nonumber &&\Lambda^n(f)(v_1\wedge\cdots\wedge v_n):=f(v_1)\wedge\cdots\wedge f(v_n).
\end{eqnarray}

The vector spaces $V$, $S^n(V)$ and $\Lambda^n(V)$ have a finite dimension. About a  trace of these linear map, we obtain the following equalities.
\begin{eqnarray}
\label{powerstr} \mathrm{Tr}(f^n)&=&\alpha_1^n+\cdots+\alpha_m^n=Q_n(\beta_1,\dots,\beta_m),\\
\label{symmtr}\mathrm{Tr}(S^n(f))&=&\sum_{1\leq i_1\leq \cdots \leq i_n\leq m}\alpha_{i_1}\cdots\alpha_{i_n}=h_n(\beta_1,\dots,\beta_m), \\
\label{lambdatr}\mathrm{Tr}(\Lambda^n(f))&=&\sum_{1\leq i_1<\cdots <i_n\leq m}\alpha_{i_1}\cdots\alpha_{i_n}=\beta_n
\end{eqnarray}
where $\beta_i$ be a elementary symmetric polynomial on $\alpha_1,\dots,\alpha_m$ for each $i \in\mathbb{N}$.

In this chapter afterward, we let $G$ be an arbitrary finite multiplicative group with identity element, and we call the representation that has a finite dimension over $\mathbb{C}$. If we let $\rho:G\rightarrow GL(V)$ be the representation of $G$ for each $i\geq 0$, we can give the $i$-th symmetric power representation $S^i(V)$ and the $i$-th exterior representation $\bigwedge^i(V)$ that is defined by
\begin{eqnarray}
\label{symmrep}S^i\rho:G\rightarrow GL(S^i(V)),\ (S^i\rho)(g):=S^i(\rho(g)),\\
\label{lambdarep}\Lambda^i\rho:G\rightarrow GL(\textstyle\bigwedge^i(V)),\ (\Lambda^i\rho)(g):=\Lambda^i(\rho(g)).
\end{eqnarray}
from (\ref{symmf}),\ (\ref{lambdaf}), where $g\in G$. Remark that if the representation $V$ is isomorphic to the representation $W$ as the representation of $G$, then $S^i(V)$ is isomorphic to $S^i(W)$, and $\bigwedge^i(V)$ is isomorphic to $\bigwedge^j(W)$, as the representation of $G$, too. 

 As described under Example \ref{representationspace},\ the set of the all representation of $G$ has a equivalence relation that is defined by the isomorphism as the representaiton of $G$. Let $M_{\mathbb{C}}(G)$ be the set of isomorphism classes $\{V\}$ of finite dimension representation of $G$ over $\mathbb{C}$. The operation  $\{V\}+\{W\}:=\{V\oplus W\}$ and $\{V\}\{W\}:=\{V\otimes_{\mathbb{C}} W\}$ make $M_{\mathbb{C}}(G)$ into a semiring. the functions $\lambda^n: M_{\mathbb{C}}(G) \rightarrow M_{\mathbb{C}}$, $\lambda^n(\{V\})=\{\bigwedge^n(V)\}$ defines a \lambdasemiring structure on $M_{\mathbb{C}}(G)$. 

Now, let state once again the purpose of this paper here. Let $V_1,V_2,\dots,V_k$ be all of representation spaces of the irreducible representation of $G$ over $\mathbb{C}$.   Since the representation $S^i(V)$ and $\bigwedge^i(V)$ are the representation of $G$ and over $\mathbb{C}$, these representations are completely reducible from Maschkes Theorem, and isomorphic to direct sum of some $V_1,V_2,\dots,V_k$ as the representation of $G$. In addition, representations $S^i(V)$ and $\bigwedge^i(V)$ have a finite dimension over $\mathbb{C}$ too since $V$ has a finite dimension over $\mathbb{C}$. Hence, these representation are isomorphic to direct sum of irreducible representation, and every irreducible component appears finitely, and uniquely. On the other words, we have
\begin{eqnarray}\label{irrdecomp1}
\begin{split}
S^i(V)\cong \overbrace{V_1 \oplus \cdots \oplus V_1}^{m_{i,1}}\oplus \overbrace{V_2 \oplus \cdots \oplus V_2}^{m_{i,2}}\oplus\cdots\oplus\overbrace{V_k \oplus \cdots \oplus V_k}^{m_{i,k}} \\
\textstyle\bigwedge^i(V)\cong \overbrace{V_1 \oplus \cdots \oplus V_1}^{n_{i,1}}\oplus\overbrace{V_2 \oplus \cdots \oplus V_2}^{n_{i,2}}\oplus\cdots\oplus\overbrace{V_k \oplus \cdots \oplus V_k}^{n_{i,k}}
\end{split}
\end{eqnarray}
and the nonnegative numbers $m_{i,1},\dots,m_{i,k},n_{i,1},\dots,n_{i,k}$ are determined finitely, and uniquely.  The purpose of this paper is that calculating about it.

 To achieve this purpose, we consider the character of the representation $\rho:G\rightarrow GL(V)$, which is a element of $\MapC{G}$, and is reffed that maps from $g\in G$ to the trace of $\rho(g)$. If the representation $V$ is isomorphic to the representation $W$, the character of $V$ is equal to the character of $W$. Hence, we can define a map $X:M_{\mathbb{C}}(G)\rightarrow \MapC{G} $ which maps from the representation $V$ to the character of it. A map $X$ is injective from char($\mathbb{C}$)=$0$ (from \cite{bu-3}, Corollary 2.3.7). 

In addition, as described under Example \ref{MapGC}, $\MapC{G}$ is a $\mathbb{C}$-$\mathrm{algebra}$ with the operation $(f_1+f_2)(g)=f_1(g)+f_2(g),\ (f_1f_2)(g)=f_1(g)f_2(g),\ (rf_1)(g)=rf_1(g)\ (g\in G)$. In particular, it is a $\Qalg.$ A map $X:M_{\mathbb{C}}(G)\rightarrow\MapC{G}$ is a ring\ monomorphism with these operator. Let\ $X_1,\dots,X_k$ be characters of $V_1,\dots,V_k$, on the other hand, let $\chi_j$ be X($\{V_j\}$) for each $j=1,\dots,k$. In this case, the equalities (\ref{irrdecomp1}) is equivalent to
\begin{eqnarray}\label{irrdecomp2}
\begin{split}
X(\{S^i(V)\})&=m_1\chi_1+\cdots+m_k\chi_k,\\
X(\{\textstyle\bigwedge^i(V)\})&=n_1\chi_1+\cdots+n_k\chi_k.
\end{split}
\end{eqnarray}

At last, we consider about $X(\{S^i(V)\})$ and $X(\{\textstyle\bigwedge^i(V)\})$, which are characters of the symmetric powers representation and the exterior powers representation. About these, we have the following Proposition. \\

The following equalities are hold for any $i=0,1,2,\dots, $ and for any representation of $G$.
\begin{eqnarray}\label{irrdecomp3}
\begin{split}
(1&)&\ X\big(\{\textstyle\bigwedge^i(V)\}\big)=\lambda^i\big(X(\{ V\})\big),\\
(2&)&\ X\big(\{S^i(V)\}\big)=S^i\big(X(\{V\})\big).
\end{split}
\end{eqnarray}
We describe the proof of this Proposition. About $(1)$, See more in \cite{bu-1}.

\begin{proof} Let $(R_{\mathbb{C}}(G), \theta)$ be the ring completion of  $M_{\mathbb{C}}(G)$. In this case, $R_{\mathbb{C}}(G)$ has a structure of  a \prelambdaring uniquely such that a map $\theta:M_{\mathbb{C}}(G)\rightarrow :R_{\mathbb{C}}(G)$ is a \lambdaf{homomorphism} from Proposition \ref{makelambdaring}.

The claim of $(1)$ is that a map $X:M_{\mathbb{C}}(G)\rightarrow\MapC{G}$ is a \lambdaf{homomorphism}. At first, there is a $\mathrm{ring homomorphism}$ $X':R_{\mathbb{C}}(G)\rightarrow\MapC{G}$ uniquely such that the following equalities holds
\begin{eqnarray}\label{repchi}
X=X'\circ\theta
\end{eqnarray}
from Proposition \ref{semiuniv}. A \lambdaf{homomorphism}\ $\theta$ can exchange the  $\lambda$-operation,\ so If we have  $X'\circ\lambda^i(\theta(\{V\}))=\lambda^i\circ X'(\theta(\{V\}))$ for any $\{V\}\in M_{\mathbb{C}}(G)$, then we can finish the proof of $(1)$ from Proposition \ref{lambdafcha1}. Since $\MapC{G}$ is the $\Ztorsionfree$, it suffered to show that
\begin{eqnarray}\label{irrdecompfinal}
X'\circ\psi^i(\theta(\{V\}))=\psi^i\circ X'(\theta(\{V\})) 
\end{eqnarray}
for any $i\in\mathbb{N}$, it is satisfied from Proposition\ \ref{lambdafcha2}.
 
Now, we prove (\ref{irrdecompfinal}). Let $\rho:G\rightarrow GL(V)$ be the representation of $G$ with $m:=\dim V$, let $\alpha_1,\dots,\alpha_m$ be eigenvalues of $\rho(g)$ for each $g\in G$, and let $\beta_i$ be a $i$-th elementary symmetric polynomial on $\alpha_1,\dots,\alpha_m$. Since\ Proposition (\ref{Adams1}) (1),\  (\ref{repchi}) and (\ref{lambdatr}), we have
\begin{eqnarray}\label{irrdecompproof1}
\begin{split}
X'\circ\psi^i\big(\theta(\{V\})\big)(g)&=X'\Big(Q_i\big(\lambda^1\circ\theta(\{V\}),\dots,\lambda^i\circ\theta(\{V\})\big)\Big)(g) \\
&=Q_i\big(X'\circ\theta\circ\lambda^1(\{V\})(g),\dots,X'\circ\theta\circ\lambda^i(\{V\})(g)\big) \\
&=Q_i\big(X\circ\lambda^1(\{V\})(g),\dots,X\circ\lambda^i(\{V\})(g)\big) =Q_i(\beta_1,\dots,\beta_i)
\end{split}
\end{eqnarray}
and since (\ref{MapGCdef}), (\ref{repchi}), (\ref{powerstr}) , we have
\begin{eqnarray}\label{irrdecompproof2}
\begin{split}
\psi^i\circ X'\big(\theta(\{V\})\big)(g) &= X'\circ\theta(\{V\})(g^i)\\
&=X(\{V\})(g^i)\\
&=\alpha_1^i+\cdots+\alpha_m^i=Q_i(\beta_1,\cdots,\beta_m).
\end{split}
\end{eqnarray}
To prove that (\ref{irrdecompproof1}) equals (\ref{irrdecompproof2}), we consider separately in the case of $i\leq m$ and $i>m$.
\begin{itemize}
\item If $i\leq m$, then we have $Q_i(\beta_1,\dots,\beta_i)=Q_i(\beta_1,\dots,\beta_m)$ from Lemma \ref{logdiffchalemma}$(2)$. 
\item Assume that $i>m$. If $k\in\mathbb{N}$ is satisfied $k\geq m$, then $\beta_k=0$. hence, we have\ $Q_i(\beta_1,\dots,\beta_i)=Q_i(\beta_1,\dots,\beta_m,0,\dots,0)=Q_i(\beta_1,\dots,\beta_m)$ from Lemma\ \ref{logdiffchalemma}$(3)$.
\end{itemize}
Therefore we obtain (\ref{irrdecompfinal}), so we finish to prove $(1)$.

 Next, we consider about $(2)$ with the same notation. Since the result of $(1)$, Proposition\ref{Hyperlambda} and (\ref{symmtr}), we have
\begin{eqnarray*}
X(\{S^i(V)\})(g)&=&h_i(\beta_1,\dots,\beta_i) \\
&=&h_i\big(X(\{ \Lambda^1(V)\})(g),\dots,X(\{\Lambda^i(V)\})(g)\big) \\
&=&h_i\big((X\circ\lambda^1(\{V\}))(g),\dots,(X\circ\lambda^i(\{V\}))(g)\big)\\
&=&h_i\big(\lambda^1\circ X(\{V\}),\dots,\lambda^i\circ X(\{V\})\big)(g)=S^i\big(X(\{ V\})\big)(g)
\end{eqnarray*}
for any $g\in G$. so, we finish to prove $(2)$ too.
\end{proof}

Since (\ref{irrdecomp3}),\ The equalities (\ref{irrdecomp2}) is equivalence to
\begin{eqnarray}\label{irrdecomp4}
\begin{split}
S^i\big(X(\{ V\})\big)&=X(\{S^i(V)\})=m_1\chi_1+\cdots+m_k\chi_k,\\
\lambda^i\big(X(\{ V \})\big)&=X(\{\textstyle\bigwedge^i(V)\})=n_1\chi_1+\cdots+n_k\chi_k.
\end{split}
\end{eqnarray}

 Hence, we obtain the following summary.

Let $V$ be the representation of $G$. To calculate each multiplicity of irreducible representation of the symmetric powers representation $S^i(V)$ and the exterior powers representation $\bigwedge^i(V)$, we calculate $S^i\big(X(\{ V\})\big)$ and $\lambda^i\big(X(\{V\})\big)$, and express these character to sum of irreducible characters.
\newpage

\section{Calculating method}\label{calculateresult1}
Now, we calculate the multiplicity of irreducible component of the symmetric and exterior powers representation of finite groups by some methods. Let $G$ is a finite group, and let $\chi_1,\dots,\chi_k$ be all of irreducible character of $G$. We denote $\chi$ by the character of the representation of $G$, then we calculate nonnegative integers $m_{i,j}$ and $n_{i,j}$ such that the following equalities hold.
\begin{eqnarray}\label{approach}
\begin{split}
S^i(\chi)&=m_{i,1}\chi_1+\cdots+m_{i,k}\chi_k,\\
\lambda^i(\chi)&=n_{i,1}\chi_1+\cdots+n_{i,k}\chi_k
\end{split}
\end{eqnarray}
In addition, we define a character of the trivial representation which has 0-dimension by a zero element of a ring $\MapC{G}$.

By the property of  $\lambda$-operation and symmetric powers operation, the map $S^0$ and  $\lambda^0$ are constant maps to a unit element of $\MapC{G}$, and the map $S^1$ and $\lambda^1$ are identity map of $\MapC{G}$. Therefore, it is clear that the case of  $i=0,1$. And, the case of $i>\chi(1)$, which is the dimension over  of the representation space, $\lambda^i(\chi)$ is $0$ from that dimension over $\mathbb{C}$ of $i$-th exterior power representation is $0$.
 
 And, we define the inner product of $\MapC{G}$ by the following equality.
\begin{eqnarray}\label{innerproduct}
\inn{f}{f'}{G}:=\dfrac{1}{\left|G\right|}\sum_{g\in G}f(g)f'(g^{-1})\ \ (f,f'\in\MapC{G}).
\end{eqnarray}

\subsection{Formulations of operations of a \prelambdaring }\label{calculateresultoperate}
The statements about this section is described more detail in \cite{bu-1}. By using the inner product, we can write nonnegative integers $m_{i,j},n_{i,j}$ in  (\ref{approach}) as the following equalities.
\[ m_{i,j}=\inn{\chi_j}{S^i(\chi)}{G},\ n_{i,j}=\inn{\chi_j}{\lambda^i(\chi)}{G}. \]
for any $i=0,1,2,\dots$ and $j=1,2,\dots,k$. Hence the problem is the value of $S^i(\chi)(g)$ and $\lambda^i(\chi)(g)$ for $g\in G$, which is moved by an element of $G$ the character $S^i(\chi)$ and $\lambda^i(\chi)$. The reason is that the value of $S^i(\chi)(g)$ and $\lambda^i(\chi)(g)$ is used in (\ref{innerproduct}). Now, we describe the formula of operators of a \prelambdaring , Newtons formula and Propositon\ \ref{formulaanother} again. For all $n\in\mathbb{N}$, we have
\begin{eqnarray}
\label{Formula1} \psi^n-\lambda^1\psi^{n-1}+\cdots+(-1)^{n-1}\lambda^{n-1}\psi^1=(-1)^{n+1}n\lambda^n,\\
\label{Formula2} S^n+(-1)\lambda^1 S^{n-1}+\cdots+(-1)^{n-1}\lambda^{n-1}S^1+\lambda^n=0.
\end{eqnarray}
We apply the character $\chi$ to these equation, and move an element $g\in G$. We describe Adams operation on $\MapC{G}$ as follow, in Example \ref{MapGC}.
\begin{eqnarray}\label{Formula4}
\psi^n(f)(g)=f(g^n)\ (g\in G).
\end{eqnarray}
for $n\in\mathbb{N},\ f\in\MapC{G}$. Therefore using (\ref{Formula1}), (\ref{Formula2}) and (\ref{Formula4}), we can calculate $S^i(\chi)(g)$ and $\lambda^i(\chi)(g)$.

In addition, remarks that the character $S^i(\chi)$ and $\lambda^i(\chi)$ is a class function, which is satisfied $S^i(\chi)(ghg^{-1})=S^i(\chi)(h)$ and $\lambda^i(\chi)(ghg^{-1})=\lambda^i(\chi)(h)$ for any elements $g, h\in G$.

\begin{exa}[\cite{bu-1},\ p.95]\label{ExampleS31}
A symmetric group $S_3$ has only 2-dimension irreducible representation, so we consider about its 2nd  exterior power representation.

At first, we describe conjugate classes and the character table of $S_3$, which is the following statement and table.
\begin{eqnarray*}
C_1=\{ (1)\},\ C_2=\{ (1\ 2),(1\ 3),(2\ 3) \},\ C_3=\{ (1\ 2\ 3),(1\ 3\ 2)\}.
\end{eqnarray*}
\begin{center}
\begin{tabular}{|c||c|c|c|}\hline
          &$C_1$ & $C_2$ & $C_3$ \\ \hline\hline 
   $\chi_1$ &$1$&$ 1$&$1$\\ \hline
   $\chi_2$ &$1$&$-1$&$1$ \\ \hline
   $\chi_3$ &$2$ &$ 0$&$-1$ \\ \hline
\end{tabular}
\end{center}
It is clear that $\lambda^0(\chi_3)$ equals $\chi_1$ and $\lambda^1(\chi_3)$ equals $\chi_3$, we consider about $\lambda^2(\chi_3)$ by using Newton formula $\mathrm{(\ref{Formula1})}$ when $n=2$, we have
\[ \lambda^2(\chi_3)=\dfrac{1}{2}(\chi_3^2-\psi^2(\chi_3)). \]
Hence, $\mathrm{(\ref{Formula4})}$ and the table that noted earlier give
\begin{eqnarray*}
\lambda^2(\chi_3)((1))&=&\dfrac{1}{2}(2^2-2)=1, \\
\lambda^2(\chi_3)((1\ 2))&=&\dfrac{1}{2}(0^2-2)=-1, \\
\lambda^2(\chi_3)((1\ 2\ 3))&=&\dfrac{1}{2}((-1)^2-(-1))=1.
\end{eqnarray*}
Then, we obtain $\lambda^2(\chi_3)=\chi_2$. This finish to calculate about $\lambda^2(\chi_3)$, however, is the inner product or the like, we may require further calculations actually.
\end{exa}

\subsection{The generating function of multiplicity}\label{calculateresultgenerate}
To calculate the character $S^i(\chi)$ and $\lambda^i(\chi)$ for any $i\geq 0$, we consider $S_t(\chi),\lambda_t(\chi)$. Expanding each of the power series, we obtain
\begin{eqnarray}\label{approach1}
\begin{split}
S_t(\chi)=\sum_{i=0}^{\infty}S^i(\chi)t^i=\sum_{i=0}^{\infty}\Big(\sum_{j=1}^k m_{i,j}\chi_j\Big)t^i=\sum_{j=1}^k\Big(\sum_{i=0}^{\infty}\inn{\chi_j}{S^i(\chi)}{G}t^i\Big)\chi_j,\\
\lambda_t(\chi)=\sum_{i=0}^{\infty}\lambda^i(\chi)t^i=\sum_{i=0}^{\infty}\Big(\sum_{j=1}^k n_{i,j}\chi_j\Big)t^i=\sum_{j=1}^k\Big(\sum_{i=0}^{\infty}\inn{\chi_j}{\lambda^i(\chi)}{G}t^i\Big)\chi_j.
\end{split}
\end{eqnarray}
In this section, we define the generating function of the multiplicity of irreducible component.
\begin{defi}\label{genefunction}
We define the formal power series $\inn{\chi_j}{S_t(\chi)}{G}$ and $\inn{\chi_j}{\lambda_t(\chi)}{G}$ with coefficient is nonnegative integer for $j=1,\dots, k$ by the following equalities.
\begin{eqnarray}\label{lambdainn}
\begin{split}
\inn{\chi_j}{S_t(\chi)}{G}&:=\sum_{i=0}^{\infty}\inn{\chi_j}{S^i(\chi)}{G}t^i=\dfrac{1}{|G|}\sum_{g\in G} \chi_j(g)S_t(\chi)(g^{-1}), \\
\inn{\chi_j}{\lambda_t(\chi)}{G}&:=\sum_{i=0}^{\infty}\inn{\chi_j}{\lambda^i(\chi)}{G} t^i=\dfrac{1}{|G|}\sum_{g\in G} \chi_j(g)\lambda_t(\chi)(g^{-1}) \\ 
\end{split}
\end{eqnarray}
\end{defi}

By using (\ref{approach1}), we obtain the following equalities.
\begin{eqnarray}
S_t(\chi)=\sum_{j=1}^k \inn{\chi_j}{S_t(\chi)}{G} \chi_j,\ \lambda_t(\chi)=\sum_{j=1}^k \inn{\chi_j}{\lambda_t(\chi)}{G} \chi_j
\end{eqnarray}
A formal power series $\inn{\chi_j}{S_t(\chi)}{G}$ and $\inn{\chi_j}{\lambda_t(\chi)}{G}$ imply that the generating function of the multiplicity about $\chi_j$, hence it is believed that calculating such generating function is one means for computing the multiplicity. This method would be effective in calculating of the character of the symmetric powers representation especially of course in calculating of the character of the exterior powers representation.
\begin{exa}\label{ExampleS32}
We calculate about power series $S_t(\chi_3)$, that $\chi_3$ is 2-dimension irreducible representation of $S_3$. From $\mathrm{Example\ \ref{ExampleS31}}$, we obtain
\[ \lambda_t(\chi_3)=\chi_1+\chi_3t+\chi_2t^2 \]
which gives
\[ \lambda_t(\chi_3)((1))=(1+t)^2,\ \ \lambda_t(\chi_3)((1\ 2))=1-t^2,\ \ \lambda_t(\chi_3)((1\ 2\ 3))=1-t+t^2. \]
From $\mathrm{Definition\ \ref{defofsymm}}$, which is Definition of $\mathrm{Symmetric}$\ $\mathrm{powers}$\ $\mathrm{operation}$, we have
\begin{eqnarray}\label{calresult}
\begin{split}
S_t(\chi_3)((1))&=\dfrac{1}{(1-t)^2}=\sum_{i=0}\binom{i+1}{i}t^i,\\ 
S_t(\chi_3)((1\ 2))&=\dfrac{1}{1-t^2}=\sum_{i=0}t^{2i},\\ 
S_t(\chi_3)((1\ 2\ 3))&=\dfrac{1}{1+t+t^2}=\dfrac{1-t}{1-t^3}=\sum_{i=0}t^{3i}-t^{3i+1}
\end{split}
\end{eqnarray}
Putting these equalities into $\mathrm{(\ref{lambdainn})}$, we have
\begin{eqnarray}\label{calresult1}
\begin{split}
\inn{\chi_1}{S_t(\chi_3)}{S_3}&=\dfrac{1}{6}\Big(\dfrac{1}{(1-t)^2}+\dfrac{3}{1-t^2}+\dfrac{2}{1+t+t^2}\Big), \\
\inn{\chi_2}{S_t(\chi_3)}{S_3}&=\dfrac{1}{6}\Big(\dfrac{1}{(1-t)^2}-\dfrac{3}{1-t^2}+\dfrac{2}{1+t+t^2}\Big), \\
\inn{\chi_3}{S_t(\chi_3)}{S_3}&=\dfrac{1}{6}\Big(\dfrac{2}{(1-t)^2}-\dfrac{2}{1+t+t^2}\Big)
\end{split}
\end{eqnarray}
More calculating, we have another expressing
\begin{eqnarray}\label{calresult2}
\begin{split}
\inn{\chi_1}{S_t(\chi_3)}{S_3}&=\dfrac{1}{(1-t^2)(1-t^3)}, \\
\inn{\chi_2}{S_t(\chi_3)}{S_3}&=\dfrac{t^3}{(1-t^2)(1-t^3)}, \\
\inn{\chi_3}{S_t(\chi_3)}{S_3}&=\dfrac{t+t^2}{(1-t^2)(1-t^3)}=\dfrac{t}{(1-t)(1-t^3)}.
\end{split}
\end{eqnarray}

We have two form $\mathrm{(\ref{calresult1}) and (\ref{calresult2})}$ of the generating function of multiplicity, $\mathrm{(\ref{calresult1})}$ is oriented to see $S^n(\chi_3)$ for some $n\in\mathbb{N}$. For example, comparing the coefficients of $t^6$ in $\mathrm{(\ref{calresult}),\ (\ref{calresult1})}$, we have
\[ S^{6}(\chi_3)=\dfrac{\binom{7}{6}+3+2}{6}\chi_1+\dfrac{\binom{7}{6}-3+2}{6}\chi_2+\dfrac{\binom{7}{6}-1}{3}\chi_3=2\chi_1+\chi_2+2\chi_3\]
On the other hand, $\mathrm{(\ref{calresult2})}$ is more complexity than  $\mathrm{(\ref{calresult1})}$, however there is an advantage that if we have $\dfrac{1}{(1-t^2)(1-t^3)}$ in detail, we obtain the multiplicity for all $n\in\mathbb{N}$. It is believed that the change by the representation which one better in the calculation.

 We describe coefficients of $t^0, t^1, \dots, t^{10}$ in $\inn{\chi_1}{S_t(\chi_3)}{S_3},\ \inn{\chi_2}{S_t(\chi_3)}{S_3}$ and $\inn{\chi_3}{S_t(\chi_3)}{S_3}$ below. We focus on that the coefficient of $t^n$ in $\dfrac{1}{(1-t^2)(1-t^3)}$ is the number of the pair of nonnegative integer $(x,y)$ such that $2x+3y=n$.
\begin{center}
\begin{tabular}{|c||c|c|c|c|c|c|c|c|c|c|c|c|}\hline
          &$t^0$&$t^1$&$t^2$&$t^3$&$t^4$&$t^5$&$t^6$&$t^7$&$t^8$&$t^9$&$t^{10}$&$\cdots$ \\ \hline\hline 
   $\inn{\chi_1}{S_t(\chi_3)}{S_3}$&$1$&$0$&$1$&$1$&$1$&$1$&$2$&$1$&$2$&$2$&$2$&$\cdots$\\ \hline
   $\inn{\chi_2}{S_t(\chi_3)}{S_3}$&$0$&$0$&$0$&$1$&$0$&$1$&$1$&$1$&$1$&$2$&$1$&$\cdots$\\ \hline
   $\inn{\chi_3}{S_t(\chi_3)}{S_3}$&$0$&$1$&$1$&$1$&$2$&$2$&$2$&$3$&$3$&$3$&$4$&$\cdots$\\ \hline
\end{tabular}
\end{center}
\end{exa}

\subsection{An one-dimension representation}\label{calculateresult1st}
Let $\chi_1$ be a trivial character. In this section, we consider when $\chi$ has one dimension.

At first, if $i\geq 2$ then $\lambda^i(\chi)=0$. hence we have
\[ \lambda_t(\chi)=\chi_1+\chi t.\]
Next, we obtain
\[ S_t(\chi)=\dfrac{1}{\chi_1-\chi t}=\sum_{i=0}\chi^i t^i \]
which implies that $S^i(\chi)=\chi^i$ for any $i\in\mathbb{N}$. A set of all character of one dimension representation, which is a subset of $\MapC{G}$, has a structure of a group with the multiplication, so let $q$ is the order of $\chi$ about the multiplication, we have
\[ \langle \chi_j,S_t(\chi)\rangle_G=\dfrac{1}{1-t^q}\sum_{i=0}^{q-1} \delta_{\chi_j,\chi^i}t^i .\]

\begin{exa}\label{ExampleS33}
We consider about characters of one dimension representation of $S_3$, which are \ $\chi_1$ and $\chi_2$ (See $\mathrm{Example\ \ref{ExampleS31}}$). The order of these character is 1 and 2 respectively, we obtain
\begin{eqnarray*}
\lambda_t(\chi_1)&=&\chi_1+\chi_1t,\ \ S_t(\chi_1)=\dfrac{1}{1-t}\chi_1, \\
\lambda_t(\chi_2)&=&\chi_1+\chi_2t,\\ S_t(\chi_2)&=&\chi_1+\chi_2t+\chi_1t^2+\chi_2t^3+\cdots=\dfrac{1}{1-t^2}\chi_1+\dfrac{t}{1-t^2}\chi_2.
\end{eqnarray*}
\end{exa}

\subsection{A normal subgroup and the permutation representation }\label{calculateresultburnside}
In this section, we consider about the character which property is charactered by Adams operation,  and contain the character of the permutation representation. And as a specific example, we describe the symmetric and exterior powers representation of a  permutation representation which is generated by following action of $G$. $G$ acts on the set of left cosets $G/N$ of $N$ in $G$ by left multiplication $g(g'N):=(gg')N$, where $N$ is a normal subgroup of $G$.

 For any $g\in G$, we let $\left|g\right|$ be an order of an element $g$, and let $(a,b)$ be a greatest common divisor of integers $a$ and $b$. Since $G$ is a finite group, we have $\psi^{m_1}=\psi^{m_2}$ for any $m_1, m_2\geq 1$ with $m_1\equiv m_2\ \pmod {\left|G\right|}$. In addition, we define a function of $G$ which has more strict condition that is characterized by $|G|$.

\begin{defi}\label{AburnsideBdef}
We define $B$ by 
\[ B:=\{ f\in\MapC{G}\ \mid\ \psi^n(f)=\psi^{(n,|G|)}(f)\ \mathrm{for\ all}\ n=1,2,\dots,|G| \}. \]
It is a subset of $\MapC{G}$, 
\end{defi}

A set $B$ is closed by the addition from Proposition \ref{Adamshomo}. Actually, a set $B$ is a subring of $\MapC{G}$, and it is closed by $\lambda$-operation. However this proof use the properties of $\lambda$-ring, which is not described in this paper, and it is not use later. then, we do not describe this proof.

\begin{exa}\label{Exampleburnside}
Assume that a finite group $G$ acts a finite set $X$. Then, let $\chi$ be the character of the representation, whose representation space is generated by elements of $X$ as vector space. so, the character $\chi$ belongs to $B$.
\end{exa}
\begin{proof} For any $g\in G$, let $X_g$ be a set of an element $x\in X$ such that $gx=x$. Therefore we have $\chi(g)=\left|X_g\right|$, hence it suffices to show that $X_{g^n}=X_{g^{(n,|G|)}}$ for any $n\geq 1$.
 Let $x$ be an element of $X_{g^n}$. Then we have $g^nx=x$. Also, there are two integers $a$ and $b$ such that $an+b|G|=(n,|G|)$. We can choose an integer $a$ to $a\in\mathbb{N}$, hence we have $g^{(n,|G|)}x=g^{an+b|G|}x=g^{an}x=x$. Therefore we obtain $x\in X_g^{(n,|G|)}$. It is clear that $X_g^{(n,|G|)}\subset X_{g^n}$. This finish to prove $X_{g^n}=X_{g^{(n,|G|)}}$ for any $g\in G$.
\end{proof}

An element of $B$ has a such property.

\begin{theo}\label{Aburnside}
Let $a_1,a_2,\dots,a_r$ be all of divisor of $|G|$ such that $1=a_1< a_2 <\cdots < a_r=|G|$. For each $\chi\in B$, there are $b_1,\dots,b_r\in\MapC{G}$ uniquely, such that two following equalities hold.
\begin{eqnarray}\label{Aburnside1}
\lambda_t(\chi)=\prod_{i=1}^r (1-(-t)^{a_i})^{\frac{b_i}{a_i}}
\end{eqnarray}
and
\begin{eqnarray}\label{Aburnside2}
\psi^{a_l}(\chi)=\sum_{l';\ a_{l'}|a_l}b_{l'}
\end{eqnarray}
for any $l=1,\dots,r$.
\end{theo}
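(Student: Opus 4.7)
The plan is to translate the multiplicative equation (\ref{Aburnside1}) into an additive one by applying the logarithmic derivative, which is injective on $\Lambda(\MapC{G})$ because $\MapC{G}$ is a $\mathbb{Q}$-algebra (hence $\Ztorsionfree$) by Example \ref{MapGC}, so that Proposition \ref{logdiffchaz} applies. This reduces the problem to a Möbius inversion over the divisor poset of $|G|$.

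First, I would dispose of uniqueness and produce the candidates for $b_1,\dots,b_r$. The equations (\ref{Aburnside2}), indexed by $l=1,\dots,r$, form a triangular linear system for $b_1,\dots,b_r$ in terms of $\psi^{a_1}(\chi),\dots,\psi^{a_r}(\chi)$ with respect to the divisibility order on $\{a_1,\dots,a_r\}$. Möbius inversion on this finite poset gives a unique solution
\[ b_l \;=\; \sum_{a_{l'}\,\mid\, a_l} \mu\!\left(\frac{a_l}{a_{l'}}\right)\psi^{a_{l'}}(\chi)\;\in\;\MapC{G}, \]
where $\mu$ is the classical Möbius function. This settles uniqueness and defines our candidates; it remains to verify that (\ref{Aburnside1}) holds for this choice.

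Next, I would apply $-t\dfrac{d}{dt}\log$ to both sides of (\ref{Aburnside1}) and compare. By Definition \ref{Adams0}, the left hand side becomes $\psi_{-t}(\chi)=\sum_{n\geq 1}\psi^n(\chi)(-t)^n$. For the right hand side, additivity of the logarithmic derivative from multiplication to addition (equation (\ref{logdiffhom})) combined with Lemma \ref{binomialdiff} (applied with $a=1$, $n=a_i$, $r=b_i$) yields
\[ -t\dfrac{d}{dt}\log\prod_{i=1}^r (1-(-t)^{a_i})^{\frac{b_i}{a_i}} \;=\; \sum_{i=1}^r\sum_{k=1}^{\infty} b_i (-t)^{k a_i} \;=\; \sum_{n=1}^{\infty}\Bigl(\sum_{a_i\,\mid\, n} b_i\Bigr)(-t)^n. \]
So the whole theorem reduces to the coefficient identity $\psi^n(\chi)=\sum_{a_i\mid n} b_i$ for every $n\geq 1$, after which injectivity of the log derivative promotes the equality of derivatives to (\ref{Aburnside1}).

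The key step, where the hypothesis $\chi\in B$ enters decisively, is matching the coefficient equation for arbitrary $n$ with (\ref{Aburnside2}), which we have only for $n=a_l$. Setting $d:=(n,|G|)$, since $d$ is a divisor of $|G|$ we have $d=a_m$ for some $m$, so by Definition \ref{AburnsideBdef} $\psi^n(\chi)=\psi^{d}(\chi)=\psi^{a_m}(\chi)$, which by (\ref{Aburnside2}) equals $\sum_{a_{l'}\mid a_m} b_{l'}$. On the other side, the indices $i$ with $a_i\mid n$ are exactly those divisors of $|G|$ that also divide $n$, i.e. the divisors of $(n,|G|)=a_m$, so $\sum_{a_i\mid n} b_i=\sum_{a_{l'}\mid a_m}b_{l'}$, and the two coefficients agree. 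The main obstacle I anticipate is precisely this identification of the index set $\{i:a_i\mid n\}$ with the divisors of $(n,|G|)$; once this combinatorial observation is in place, the rest is a routine assembly of the logarithmic derivative, Lemma \ref{binomialdiff}, and Möbius inversion.
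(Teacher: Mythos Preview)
Your proof is correct and more streamlined than the paper's. The paper proceeds constructively: it groups the terms of $\psi_{-t}(\chi)$ according to the value of $(n,|G|)$, introduces auxiliary polynomials $T_l=\sum_{(n,|G|)=a_l}(-t)^{n-1}$ and $F_l=\sum_{i\geq l}A_{l,i}T_i$, and then peels off the factors $(1-(-t)^{a_l})^{A_{l,l}/a_l}$ one at a time via a recursion on $l$, recovering relation (\ref{Aburnside2}) as a by-product of unwinding the diagonal coefficients $A_{l,l}$. You instead invert (\ref{Aburnside2}) at the outset by M\"obius inversion to produce the $b_l$ explicitly, and then verify (\ref{Aburnside1}) in one stroke by comparing logarithmic derivatives through Lemma \ref{binomialdiff}. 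Your route is cleaner and makes the role of the divisor poset transparent; the paper's recursion is more algorithmic but ultimately computes the same M\"obius inversion implicitly. One small point worth making explicit: Definition \ref{AburnsideBdef} literally only asserts $\psi^n(\chi)=\psi^{(n,|G|)}(\chi)$ for $1\leq n\leq |G|$, so to extend this to all $n\geq 1$ you should also invoke the periodicity $\psi^{m_1}=\psi^{m_2}$ for $m_1\equiv m_2\pmod{|G|}$ noted just before the definition, together with the elementary fact $(n\bmod |G|,\,|G|)=(n,|G|)$.
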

\begin{proof}  Since $G$ is a finite group, we have $\psi^{m_1}(\chi)=\psi^{m_2}(\chi)$ for any $m_1,m_2 \geq 1$ with $m_1\equiv m_2\ \pmod {\left|G\right|}$. Then, from the definition of Adams operation (Definition\ \ref{Adams0}), we have 
\begin{eqnarray}\label{Aburnsidechoiceproof1}
\begin{split}
-t\dfrac{d}{dt}\log\lambda_t(\chi)&=\sum_{i=1}\psi^i(\chi)(-t)^i \\
&=\Big(\dfrac{1}{1-(-t)^{|G|}}\Big)\Big(\sum_{i=1}^{|G|}\psi^i(\chi)(-t)^i \Big).
\end{split}
\end{eqnarray}
And more, let $T_l$ be the following equalities for any $l=1,2,\dots,r$.
\[ T_l:=\sum_{n=1,2,\dots,|G|,\ (n,|G|)=a_l} (-t)^{a_n-1} \]
Remark that $T_l$ belongs to $\mathbb{Z}[t]$. Since $\mathrm{(\ref{Aburnsidechoiceproof1})}$ and $\chi\in B$, we obtain
\begin{eqnarray}\label{Aburnsidechoiceproof2}
\dfrac{d}{dt}\log\lambda_t(\chi)=\Big(\dfrac{1}{1-(-t)^{|G|}}\Big)\Big(\sum_{i=1}^r \psi^{a_i}(\chi)T_i \Big).
\end{eqnarray}
Next, we let a polynomial $F_l$ be the following equalities for any $l=1,2,\dots,r$.
\[ F_l:=\sum_{i=l}^r A_{l,i}T_i\]
whose coefficient of $F_l$ is satisfied that following relations.
\begin{eqnarray}\label{Aburnsidechoiceproof3}
\begin{split}
F_1&:=\sum_{i=1}^r\psi^{a_i}(\chi)T_i \ \ (A_{1,i}=\psi^{a_i}(\chi)),\\
F_l&:=(1-(-t)^{|G|})\dfrac{d}{dt}\log(1-(-t)^{a_l})^{\frac{A_{l,l}}{a_l}}+F_{l+1}.
\end{split}
\end{eqnarray}
About this definition we calculate both side of second equality, then we have
\begin{eqnarray*}
&{}&F_l=A_{l,l}T_l+\sum_{i=l+1}^rA_{l,i}T_i,\\
&{}&(1-(-t)^{|G|})\dfrac{d}{dt}\log(1-(-t)^{a_l})^{\frac{A_{l,l}}{a_l}}+F_{l+1}\\
&=&A_{l,l}\dfrac{1-(-t)^{|G|}} {1-(-t)^{a_l}}+F_{l+1}\\
&=&A_{l,l}\big((-t)^{a_l-1}+(-t)^{2a_l-1}+\cdots+(-t)^{\frac{|G|}{a_l}a_l-1}\big)+\sum_{i=l+1}^rA_{l+1,i}T_i
\end{eqnarray*}
which implies that
\[ A_{l+1,i}=\begin{cases}A_{l,i} &(a_l\nmid a_i), \\ A_{l,i}-A_{l,l} &(a_l\mid a_i). \end{cases}\]
For each $A_{l,l}\ (\ l=1,2,\dots,r)$, let $i_1,\dots,i_p$ be numbers such that $a_i\mid a_l$ and $1=i_1<\cdots<i_p<a_l$. Then we have
\begin{eqnarray*}
A_{l,l}=A_{l-1,l}=\cdots=A_{i_p+1,l}&=&A_{i_p,l}-A_{i_p,i_p}\\
&=&\cdots=A_{i_1,l}-A_{i_1,i_1}-\cdots,A_{i_p,i_p} \\
&=&\psi^{a_l}(\chi)-A_{i_1,i_1}-\cdots,A_{i_p,i_p}
\end{eqnarray*}
which imply that
\[ \psi^{a_l}(\chi)=\sum_{l':\ a_{l'}|a_l}A_{l',l'}\]
In addition, calculating $F_r$ we obtain
\begin{eqnarray}\label{Aburnsidechoiceproof4}
F_r=A_{r,r}T_r=A_{r,r}(-t)^{|G|-1}=(1-(-t)^{|G|})\dfrac{d}{dt}\log(1-(-t)^{a_r})^{\frac{A_{r,r}}{a_r}}.
\end{eqnarray}
Therefore, by using $\mathrm{(\ref{Aburnsidechoiceproof2}),\ (\ref{Aburnsidechoiceproof3})}$ and $(\ref{Aburnsidechoiceproof4})$ we have
\begin{eqnarray*}
\dfrac{d}{dt}\log\lambda_t(\chi)&=&\dfrac{1}{1-(-t)^{|G|}}F_1 \\
&=&\dfrac{1}{1-(-t)^{|G|}}\Big((1-(-t)^{|G|})\dfrac{d}{dt}\log(1-(-t)^{a_1})^{\frac{A_{1,1}}{a_1}}+F_2 \Big) \\
&=&\cdots \\
&=&\dfrac{1}{1-(-t)^{|G|}}\sum_{i=1}^r(1-(-t)^{|G|})\dfrac{d}{dt}\log(1-(-t)^{a_i})^{\frac{A_{i,i}}{a_i}} \\
&=&\dfrac{d}{dt}\log\prod_{i=1}^r(1-(-t)^{a_i})^{\frac{A_{i,i}}{a_i}} \\
\end{eqnarray*}
which imply that
\[ \lambda_t(\chi)=\prod_{i=1}^r(1-(-t)^{a_i})^{\frac{A_{i,i}}{a_i}} \]
from the map $\dfrac{d}{dt}\log$ is injective. The uniquely is proved by (\ref{Aburnside2}) and the induction of $l=1,\dots, r$.

\end{proof}

We describe one of example which belongs to $B$. Let $N$ be a normal subgroup of $G$. $G$ acts on the quotient group $G/N$ that the following statement holds
\[ g\in G,\ \alpha=\pi(g')\ \Longrightarrow g\alpha:=\pi(gg') \]
where a map $\pi:G\rightarrow G/N$ is a natural projection. Let $\Pi$ is a character of a representation which generalized by above action. Then, $m\Pi$ belongs to $B$ for any $m\geq 1$ from Example\ \ref{Exampleburnside}. Now, we calculate $\inn{\chi_j}{S_t(m\Pi)}{G}$ and $\inn{\chi_j}{\lambda_t(m\Pi)}{G}$ for each $j=1,\dots, k$.

A character $\Pi$ is a induced representation from the trivial character from $N$ to $G$. Hence the specific values ​​is as follows.
\[ \Pi(g)=\begin{cases} |G/N| & (g\in N) \\ 0 &(g \notin N) \end{cases}\ (g\in G). \]

Now, we define the number $O_N(g)$ by the order of $\pi(g)\in G/N$.

\begin{lemm}\label{burnsideregularlemma}
We let $a_1,\dots,a_r$ be all of divisor of $\left|G\right|$ such that $1=a_1<a_2<\cdots<a_r=\left|G\right|$. About elements $b_1,\dots,b_r\in\MapC{G}$ in Theorem \ref{Aburnside}, the following equality holds for any $i=1,2,\dots,r$ and $g\in G$. 
\begin{eqnarray*}
b_i(g)=\begin{cases} m\left|G/N\right|&(O_N(g)=a_i)\\ 0&(O_N(g)\neq a_i) \end{cases}\ \ (g\in G).
\end{eqnarray*}
\end{lemm}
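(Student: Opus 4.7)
The plan is to invoke the uniqueness clause of Theorem~\ref{Aburnside} and simply verify that the proposed formula for $b_i$ satisfies the defining recurrence (\ref{Aburnside2}) with $\chi=m\Pi$. Since $m\Pi$ is (an integer multiple of) the permutation character of $G$ on $G/N$, Example~\ref{Exampleburnside} tells us that $m\Pi\in B$, so Theorem~\ref{Aburnside} applies and guarantees both the existence and the uniqueness of the tuple $(b_1,\dots,b_r)$.

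First I would compute the left-hand side of (\ref{Aburnside2}) explicitly. By the description of Adams operations on $\MapC{G}$ given in Example~\ref{MapGC}, one has $\psi^{a_l}(m\Pi)(g)=m\Pi(g^{a_l})$, and by the definition of $\Pi$ as the induced character of the trivial character from $N$, this equals $m|G/N|$ when $g^{a_l}\in N$ and $0$ otherwise. The key elementary observation is that $g^{a_l}\in N$ if and only if $\pi(g)^{a_l}=1$ in $G/N$, i.e.\ if and only if $O_N(g)\mid a_l$. (Note also that $O_N(g)$, being the order of an element in $G/N$, is automatically one of the divisors $a_1,\dots,a_r$ of $|G|$.) Hence
\[
\psi^{a_l}(m\Pi)(g)=\begin{cases} m|G/N| & \text{if } O_N(g)\mid a_l,\\ 0 & \text{otherwise.}\end{cases}
\]

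Next I would substitute the candidate formula $\tilde b_i(g)=m|G/N|\cdot\mathbf{1}_{O_N(g)=a_i}$ into the right-hand side of (\ref{Aburnside2}). The sum $\sum_{l':\,a_{l'}\mid a_l}\tilde b_{l'}(g)$ picks up the single term with $a_{l'}=O_N(g)$ when that index exists among the divisors of $a_l$, and is zero otherwise; this happens precisely when $O_N(g)\mid a_l$, and in that case the contribution equals $m|G/N|$. Thus the proposed $\tilde b_i$ satisfy (\ref{Aburnside2}) pointwise in $g$.

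Finally I would appeal to the uniqueness part of Theorem~\ref{Aburnside} to conclude that $b_i=\tilde b_i$ for every $i=1,\dots,r$. The only mild subtlety, which I would flag explicitly, is checking that $O_N(g)$ genuinely lies in the list $\{a_1,\dots,a_r\}$ of divisors of $|G|$; this is immediate from Lagrange since $O_N(g)$ divides $|G/N|$, which divides $|G|$. No substantive difficulty is expected — the work is purely a matching of the candidate against the recurrence, and the heavy lifting has already been done in proving Theorem~\ref{Aburnside}.
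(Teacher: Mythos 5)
Your proposal is correct and is essentially the paper's argument in a different guise: the paper solves the triangular system (\ref{Aburnside2}) forward by induction on $i$ using the same computation $\psi^{a_i}(m\Pi)(g)=m|G/N|$ iff $O_N(g)\mid a_i$, whereas you verify the closed-form candidate against (\ref{Aburnside2}) and invoke uniqueness (which the paper itself derives from (\ref{Aburnside2}) alone, so your appeal to it is legitimate even though you do not check (\ref{Aburnside1}) directly). The arithmetic content is identical, so there is nothing to add.
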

\begin{proof} We prove it by induction of $i=1,2,\dots,r$. If $i=1$, we have $b_1=\psi^1(m\Pi)=m\Pi$ from $\mathrm{(\ref{Aburnside1})}$, and $N$ is equal to a set of all element $g\in G$ such that $O_N(g)=1$. Hence we have this statement when $i=1$ from the value of $m\Pi$.

Suppose by induction for $j<i$. By calculating $b_i$ from $\mathrm{(\ref{Aburnside2})}$, we have
\[ b_i=\psi^{a_i}(m\Pi)-\sum_{l;\ a_l\mid a_i,\ l\neq i}b_l \]
Also, we obtain 
\[ \psi^{a_i}(m\Pi)(g)=\begin{cases} m|G/N| & (O_N(g)\mid a_i) \\ 0 &(O_N(g)\nmid a_i) \end{cases} \]
for all $g\in G$. Hence, we have
\begin{eqnarray*}
b_i(g)=\begin{cases} m|G/N| &(O_N(g)=a_i)\\ 0 &(O_N(g)\neq a_i) \end{cases}
\end{eqnarray*}
by the induction hypothesis. This finishes the induction and the proof of the Corollary.
\end{proof}

From the above Proposition and Corollary, we have the following Theorem.

\begin{theo}\label{burnsideregular}
The following three statement holds about the character $m\Pi$.
\begin{enumerate}
\item One has\ 
\begin{eqnarray}\label{burnsideregularproof1}
\begin{split}
 \lambda_t(m\Pi)(g)&=\big(1-(-t)^{O_N(g)}\big)^{\frac{m\left|G/N\right|}{O_N(g)}},\\ S_t(m\Pi)(g)&=\big(1-t)^{O_N(g)}\big)^{-\frac{m\left|G/N\right|}{O_N(g)}}.
\end{split}
\end{eqnarray}
for any $g\in G$.
\item One has
\begin{eqnarray}\label{burnsideregulareqn}
\begin{split}
\inn{\chi_j}{S_t(m\Pi)}{G}&=\dfrac{1}{\left|G\right|}\sum_{g\in G}\chi_j(g)\big(1-t^{O_N(g)}\big)^{-\frac{m|G/N|}{O_N(g)}}, \\
\inn{\chi_j}{\lambda_t(m\Pi)}{G}&=\dfrac{1}{\left|G\right|}\sum_{g\in G}\chi_j(g)\big(1-(-t\big)^{O_N(g)})^{\frac{m|G/N|}{O_N(g)}}
\end{split}
\end{eqnarray}
for any $j=1,\dots,k$.
\item In particular, if $n\geq 1$ is satisfied $(n,|G/N|)=1$, we have
\[S^n(m\Pi)=\dfrac{1}{|G/N|} \binom{m|G/N|+n-1}{n}\Pi,\ \ \lambda^n(m\Pi)=\dfrac{1}{|G/N|}\binom{m|G/N|}{n}\Pi \]
\end{enumerate}
\end{theo}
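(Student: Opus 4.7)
The plan is to derive part (1) directly from Theorem \ref{Aburnside} combined with Lemma \ref{burnsideregularlemma}, and then obtain parts (2) and (3) as straightforward consequences. Since $m\Pi\in B$ (Example \ref{Exampleburnside}), the factorization (\ref{Aburnside1}) applies and reduces to a single factor when evaluated at any $g\in G$.

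For part (1), I would fix $g\in G$ and evaluate the product in (\ref{Aburnside1}) at $g$. By Lemma \ref{burnsideregularlemma}, $b_i(g)$ vanishes unless $a_i=O_N(g)$, in which case $b_i(g)=m\lvert G/N\rvert$. Therefore every factor except the one with $a_i=O_N(g)$ has exponent $0$ (hence equals $1$), which yields
\[ \lambda_t(m\Pi)(g)=\bigl(1-(-t)^{O_N(g)}\bigr)^{\frac{m\lvert G/N\rvert}{O_N(g)}}. \]
For the symmetric power side, I would use $S_t=(\lambda_{-t})^{-1}$ from Definition \ref{defofsymm}: substitute $-t$ for $t$ above to obtain $\lambda_{-t}(m\Pi)(g)=(1-t^{O_N(g)})^{m\lvert G/N\rvert/O_N(g)}$, then invert using Proposition \ref{binomialhomo} to get the stated formula for $S_t(m\Pi)(g)$.

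For part (2), I would plug the formulas from (1) into Definition \ref{genefunction}, using the simple observation that $O_N(g^{-1})=O_N(g)$ (the order of $\pi(g)^{-1}$ in $G/N$ equals the order of $\pi(g)$). This immediately yields (\ref{burnsideregulareqn}).

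For part (3), the key step is to compute the coefficient of $t^n$ in $\bigl(1-t^{O_N(g)}\bigr)^{-m\lvert G/N\rvert/O_N(g)}$ for each $g$. Expansion via formula (\ref{inverseformula}) shows this coefficient is nonzero only when $O_N(g)\mid n$. Since $O_N(g)\mid \lvert G/N\rvert$ always, the hypothesis $(n,\lvert G/N\rvert)=1$ forces $O_N(g)=1$, i.e. $g\in N$. For such $g$ the expansion gives $\binom{m\lvert G/N\rvert+n-1}{n}$, and for $g\notin N$ the value is $0$; comparing with $\Pi(g)=\lvert G/N\rvert\cdot \mathbf{1}_N(g)$ produces $S^n(m\Pi)=\frac{1}{\lvert G/N\rvert}\binom{m\lvert G/N\rvert+n-1}{n}\Pi$. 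The analogous expansion of $\bigl(1-(-t)^{O_N(g)}\bigr)^{m\lvert G/N\rvert/O_N(g)}$, via Definition \ref{binomial}, yields the companion formula for $\lambda^n(m\Pi)$, where the signs $(-1)^{k(1+O_N(g))}$ collapse to $1$ in the surviving case $g\in N$.

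The main obstacle I anticipate is not any single computation but the careful application of Lemma \ref{burnsideregularlemma} inside the product (\ref{Aburnside1}): one has to be sure that evaluation of a product of formal power series commutes with passing to a single surviving factor, which is clean because the ``dead'' factors evaluate to the constant $1$. Beyond that, parts (2) and (3) are essentially bookkeeping once (1) is in hand.
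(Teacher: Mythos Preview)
Your proposal is correct and follows essentially the same approach as the paper: part (1) via Lemma \ref{burnsideregularlemma} applied inside the product (\ref{Aburnside1}), part (2) by substitution into (\ref{lambdainn}), and part (3) by observing that only the terms with $O_N(g)=1$ contribute to the coefficient of $t^n$ when $(n,\lvert G/N\rvert)=1$. The only cosmetic difference is in part (3): the paper extracts the coefficient of $t^n$ from the inner-product formula (\ref{burnsideregulareqn}) and identifies $\frac{1}{\lvert G\rvert}\sum_{g\in N}\chi_j(g)=\frac{1}{\lvert G/N\rvert}\inn{\chi_j}{\Pi}{G}$, whereas you work pointwise directly from (\ref{burnsideregularproof1}) and compare with $\Pi(g)=\lvert G/N\rvert\cdot\mathbf{1}_N(g)$ --- both routes rest on the same divisibility observation and are equivalent.
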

\begin{proof}$(1)$ It is clear by $\mathrm{Lemma\ \ref{burnsideregularlemma}}$.

$(2)$ We obtain $\mathrm{(\ref{burnsideregulareqn})}$ by apply $\mathrm{(\ref{burnsideregularproof1})}$ to $\mathrm{(\ref{lambdainn})}$.

$(3)$ About $\mathrm{(\ref{burnsideregulareqn})}$, we collect the number $O_N(g)$, we have
\begin{eqnarray}\label{burnsideregularproof2}
\begin{split}
\inn{\chi_j}{S_t(m\Pi)}{G}&=\dfrac{1}{\left|G\right|}\sum_{i=1}^r\Big(\sum_{O_N(g)=a_i}\chi_j(g)\Big)\big(1-t^{a_i}\big)^{-\frac{m|G/N|}{a_i}},\\ 
\inn{\chi_j}{\lambda_t(m\Pi)}{G}
&=\dfrac{1}{\left|G\right|}\sum_{i=1}^r\Big(\sum_{O_N(g)=a_i}\chi_j(g)\Big)\big(1-(-t)^{a_i}\big)^{\frac{m|G/N|}{a_i}}
\end{split}
\end{eqnarray}
Also, we obtain
\begin{eqnarray*}
\dfrac{1}{\left|G\right|}\sum_{O_N(g)=1}\chi_j(g)=\dfrac{1}{|G/N|}\inn{\chi_j}{\Pi}{G}.
\end{eqnarray*}

Assume that $n\geq 1$ is satisfied $(n,\ |G/N|)=1$. The multiplication of $\chi_j$ in $S^n(m\Pi)$ and $\lambda^n(m\Pi)$ are the coefficient of $t^n$ in each $\inn{\chi_j}{S_t(m\Pi)}{G}$ and $\inn{\chi_j}{\lambda_t(m\Pi)}{G}$. By $a_1=1$ and $O_N(g)$ is a divisor of $|G/N|$ for any $g\in G$, we have 
\begin{eqnarray*}
\inn{\chi_j}{S^n(m\Pi)}{G}&=&\dfrac{1}{|G|} \binom{m|G/N|+n-1}{n} \Big(\sum_{O_N(g)=a_i}\chi_j(g)\Big)\\ &=&\dfrac{1}{|G/N|}\binom{m|G/N|+n-1}{n}\inn{\chi_j}{\Pi}{G},\\
\inn{\chi_j}{\lambda^n(m\Pi)}{G}&=&\dfrac{1}{|G|} \binom{m|G/N|}{n} \Big(\sum_{O_N(g)=a_i}\chi_j(g)\Big)\\ &=&\dfrac{1}{|G/N|}\binom{m|G/N|}{n}\inn{\chi_j}{\Pi}{G}
\end{eqnarray*}
from $\mathrm{(\ref{burnsideregularproof2})}$. Hence we have\ $S^n(m\Pi)=\dfrac{1}{|G/N|} \binom{m|G/N|+n-1}{n}\Pi$ and $\lambda^n(m\Pi)=\dfrac{1}{|G/N|}\binom{m|G/N|}{n}\Pi$. 
\end{proof}

\begin{exa}
We calculate a character of regular representation of $S_3$, $\Pi$, which is obtained by putting $G=S_3$, $N=\{ e\}$ and $m=1$ in $\mathrm{Theorem\ \ref{burnsideregular}}$ About a character table of $S^3$, see $\mathrm{Example\ \ref{ExampleS31}}$. About the generating function of a multiplicity of the exterior powers representation, we have
\begin{eqnarray*}
\begin{split}
\inn{\chi_1}{\lambda_t(\Pi)}{S_3}&=\dfrac{1}{6}\Big((1+t)^6+3(1-t^2)^3+2(1+t^3)^2\Big),\\
\inn{\chi_2}{\lambda_t(\Pi)}{S_3}&=\dfrac{1}{6}\Big((1+t)^6-3(1-t^2)^3+2(1+t^3)^2\Big),\\ \inn{\chi_3}{\lambda_t(\Pi)}{S_3}&=\dfrac{1}{6}\Big(2(1+t)^6 -2(1+t^3)^2\Big).
\end{split}
\end{eqnarray*}
And about the generating function of a multiplicity of the symmetric powers representation, we have
\begin{eqnarray*}
\begin{split}
\inn{\chi_1}{S_t(\Pi)}{S_3}&=\dfrac{1}{6}\Big(\dfrac{1}{(1-t)^6}+\dfrac{3}{(1-t^2)^3}+\dfrac{2}{(1-t^3)^2}\Big), \\
\inn{\chi_2}{S_t(\Pi)}{S_3}&=\dfrac{1}{6}\Big(\dfrac{1}{(1-t)^6}-\dfrac{3}{(1-t^2)^3}+\dfrac{2}{(1-t^3)^2}\Big), \\
\inn{\chi_3}{S_t(\Pi)}{S_3}&=\dfrac{1}{6}\Big(\dfrac{2}{(1-t)^6}-\dfrac{2}{(1-t^3)^2}\Big),
\end{split}
\end{eqnarray*}
In addition we describe coefficient of $t^0,t^1,\dots t^{10}$ in these generating function.
\begin{center}
\begin{tabular}{|c||c|c|c|c|c|c|c|c|c|c|c|c|}\hline
          &$t^0$&$t^1$&$t^2$&$t^3$&$t^4$&$t^5$&$t^6$&$t^7$&$t^8$&$t^9$&$t^{10}$&$\cdots$ \\ \hline\hline 
$\inn{\chi_1}{\lambda_t(\Pi)}{S_3}$&$1$&$1$&$1$&$4$&$4$&$1$&$0$&$0$&$0$&$0$&$0$&$\cdots$\\ \hline
   $\inn{\chi_2}{\lambda_t(\Pi)}{S_3}$&$0$&$1$&$4$&$4$&$1$&$1$&$1$&$0$&$0$&$0$&$0$&$\cdots$\\ \hline
   $\inn{\chi_3}{\lambda_t(\Pi)}{S_3}$&$0$&$2$&$5$&$6$&$5$&$2$&$0$&$0$&$0$&$0$&$0$&$\cdots$\\ \hline\hline
   $\inn{\chi_1}{S_t(\Pi)}{S_3}$&$1$&$1$&$5$&$10$&$24$&$43$&$83$&$132$&$222$&$335$&$511$&$\cdots$\\ \hline
   $\inn{\chi_2}{S_t(\Pi)}{S_3}$&$0$&$1$&$2$&$10$&$18$&$43$&$73$&$132$&$207$&$335$&$490$&$\cdots$\\ \hline
   $\inn{\chi_3}{S_t(\Pi)}{S_3}$&$0$&$2$&$7$&$18$&$42$&$86$&$153$&$264$&$429$&$666$&$1001$&$\cdots$\\ \hline
\end{tabular}
\end{center}
\end{exa}

\subsection{A normal subgroup and its one-dimension representation}\label{calculateresultcentral}
Let $N$ be a normal subgroup of $G$ that contains a central of $G$ which is said to $Z(G)$. And let $\zeta$ be a character of a representation of $N$ which has 1 dimension. Now, we define $\zeta_0\in\MapC{G}$ as follows.
\[ \zeta_0(g)=\begin{cases} \zeta(g) & (g\in N), \\ 0 & (g\notin N). \end{cases} \]
We let $m\in\mathbb{N}$ be that $m\zeta_0$ is a character of $G$, and in this section, we consider about symmetric and exterior powers representation which has character $m\zeta_0$. The notation $O_N(g)$ and $(a,b)$, we used as those in the previous section.

Remarks that there is a such $m\in\mathbb{N}$. For example, if $m=|G/N|$ then $m\zeta_0$ is a induced character of $\zeta$ to $G$. In addition if $|G/N|$ equals to square number, then putting $m=\sqrt{|G/N|}$ gives $\inn{m\zeta_0}{m\zeta_0}{G}=1$. However, $m\zeta_0$ is not necessarily the representation.

\begin{theo}\label{centraltheorem}
The following three statement holds about  a character $m\zeta_0$.
\begin{enumerate}
\item For any $g\in G$, we have
\begin{eqnarray}\label{centrallemma2}
\lambda_t(m\zeta_0)(g)=\big(1-\zeta(g^h)(-t)^h \big)^{\frac{m}{h}} 
\end{eqnarray}
where $h=O_N(g)$.
\item One has
\begin{eqnarray}\label{centraltheoremeqn}
\begin{split}
\inn{\chi_j}{S_t(m\zeta_0)}{G}&=\dfrac{1}{\left|G\right|}\sum_{g\in G}\chi_j(g)\Big(1-\zeta(g^{-O_N(g)})t^{O_N(g)}\Big)^{-\frac{m}{O_N(g)}}\\
\inn{\chi_j}{\lambda_t(m\zeta_0)}{G}&=\dfrac{1}{\left|G\right|}\sum_{g\in G}\chi_j(g)\Big(1-\zeta(g^{-O_N(g)})(-t)^{O_N(g)}\Big)^{\frac{m}{O_N(g)}} \\
\end{split}
\end{eqnarray}
for any $j=1,\dots,k.$
\item In particular, if $n \geq 1$ is satisfied $(n,|G/N|)=1$, then we have
\[S^n(m\zeta_0)=\binom{m+n-1}{n}\zeta_0^n,\ \ \lambda^n(m\zeta_0)=\binom{m}{n}\zeta_0^n. \]

\end{enumerate}
\end{theo}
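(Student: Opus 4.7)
The plan parallels the proof of Theorem~\ref{burnsideregular}: once (1) is established, (2) is a direct substitution and (3) follows by extracting a single coefficient. The key difference from the previous section is that, because $\zeta$ is one-dimensional, I can pin $\lambda_t(m\zeta_0)(g)$ down directly by its Adams operations, without invoking the full strength of Theorem~\ref{Aburnside}.

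To prove (1), I would use that $\MapC{G}$ is a $\mathbb{Q}$-algebra (Example~\ref{MapGC}), so $z_t$ is bijective on $\Lambda(\MapC{G})$ by Proposition~\ref{logdiffchaz}(4); hence a strict power series is determined by the sequence $\{\psi^n(m\zeta_0)\}_{n\geq 1}$. Evaluating at $g$ and writing $h=O_N(g)$, the relation $\psi^n(f)(g)=f(g^n)$ from Example~\ref{MapGC} gives
\[
\psi^n(m\zeta_0)(g)=m\zeta_0(g^n)=\begin{cases} m\,\zeta(g^h)^{n/h} & h\mid n,\\ 0 & h\nmid n,\end{cases}
\]
using that $g^n\in N$ iff $h\mid n$ and that $\zeta$ is multiplicative on $N$. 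Applying Lemma~\ref{binomialdiff} to the claimed right-hand side with $a=\zeta(g^h)$, exponent $h$, and $r=m$ produces
\[
-t\,\dfrac{d}{dt}\log\bigl(1-\zeta(g^h)(-t)^h\bigr)^{m/h}=\sum_{i\geq 1} m\,\zeta(g^h)^i(-t)^{ih},
\]
whose $z_n$-coefficients agree with the sequence above. Injectivity of $z_t$ then forces the pointwise equality asserted in (1); the $S_t$ formula follows from $S_t=(\lambda_{-t})^{-1}$ (Definition~\ref{defofsymm}) together with Proposition~\ref{binomialhomo}. Part (2) is immediate on substituting these expressions into Definition~\ref{genefunction}.

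For (3), I would extract the $t^n$-coefficient of (1) pointwise in $g$. The combinatorial heart of the argument is that, in the binomial expansion of $(1-\zeta(g^h)(-t)^h)^{m/h}$, only exponents divisible by $h$ appear; so for $\lambda^n(m\zeta_0)(g)$ to be nonzero one needs $h\mid n$. But $h=O_N(g)$ also divides $|G/N|$, and the hypothesis $(n,|G/N|)=1$ then forces $h=1$, i.e.\ $g\in N$. On $N$ the formula of (1) becomes $(1+\zeta(g)t)^m=\sum_k\binom{m}{k}\zeta(g)^k t^k$ by Definition~\ref{binomial}, whose $t^n$-coefficient is $\binom{m}{n}\zeta(g)^n$; off $N$ both sides vanish. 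Since the pointwise product $\zeta_0^n$ in $\MapC{G}$ equals $\zeta(g)^n$ on $N$ and $0$ elsewhere, this yields $\lambda^n(m\zeta_0)=\binom{m}{n}\zeta_0^n$. The $S^n$ assertion is the same argument with (\ref{inverseformula}) used to expand $(1-\zeta(g)t)^{-m}$, producing $\binom{m+n-1}{n}$ in place of $\binom{m}{n}$.

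The main obstacle is bookkeeping rather than mathematical depth: keeping signs and the exponent $h$ straight inside $(1-\zeta(g^h)(-t)^h)^{m/h}$, and remembering that $\zeta_0$ is not a homomorphism on $G$, so $\zeta_0^n$ must be read as the pointwise $n$-th power in the ring $\MapC{G}$ rather than a character of any $n$-fold tensor representation. Given the template of Theorem~\ref{burnsideregular} and the $\mathbb{Q}$-algebra structure underlying Proposition~\ref{logdiffchaz}(4), the remainder is routine.
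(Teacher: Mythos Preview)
Your proposal is correct and follows essentially the same line as the paper. For (1) you and the paper both compute $\psi^n(m\zeta_0)(g)$ explicitly, match it against the logarithmic derivative of the proposed right-hand side via Lemma~\ref{binomialdiff}, and conclude by injectivity (you cite Proposition~\ref{logdiffchaz}(4), the paper phrases it as injectivity of $\dfrac{d}{dt}\log$; these amount to the same thing). Part (2) is identical. For (3) there is only a cosmetic difference: you extract the $t^n$-coefficient pointwise and compare directly with $\zeta_0^n(g)$, whereas the paper routes the same observation (that $(n,|G/N|)=1$ and $O_N(g)\mid |G/N|$ force $O_N(g)=1$) through the inner-product expression~(\ref{centraltheoremproof}) and matches $\inn{\chi_j}{\lambda^n(m\zeta_0)}{G}$ with $\binom{m}{n}\inn{\chi_j}{\zeta_0^n}{G}$. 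Your pointwise version is marginally more direct; the paper's version stays closer to the generating-function formalism of \S\ref{calculateresultgenerate}.
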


\begin{proof} $(1)$ By calculating Adams operation\ $\psi^n(m\zeta_0)(g)$ for any $n\geq 1$, we have 
\[ \psi^n(m\zeta_0)(g)=m\zeta_0(g^n)=\begin{cases}m\zeta(g^h)^i & (h\mid n,\ n=hi), \\ 0 & (h\nmid n)\end{cases} \]
since $\zeta$ has 1 dimension. Hence, we obtain
\begin{eqnarray}\label{centrallemma2proof1}
\sum_{i=1}^{\infty}m\zeta(g^h)^{i}(-t)^{hi}=-t\dfrac{d}{dt}\log \lambda_t(m\zeta_0)(g)
\end{eqnarray}
from the definition of Adams operation. Also, we have
\begin{eqnarray}\label{centrallemma2proof2}
-t\dfrac{d}{dt}\log (1-\zeta(g^h)(-t)^h)^{\frac{m}{h}}=\sum_{i=1}^{\infty}m\zeta(g^h)^{i}(-t)^{hi}
\end{eqnarray}
from $\mathrm{Lemma\ \ref{binomialdiff}}$. By (\ref{centrallemma2proof1}),\ (\ref{centrallemma2proof2}) and that a map $\dfrac{d}{dt}\log$ is $\mathrm{injective}$, this finish to prove $(1)$. 

$(2)$ We obtain $\mathrm{(\ref{centraltheoremeqn})}$ by apply $\mathrm{(\ref{centrallemma2})}$ to $\mathrm{(\ref{lambdainn})}$ for any $j=1,\dots,k$.

$(3)$ Let $a_1,\dots,a_n$ be all of a divisor of $\left|G\right|$ such that $1=a_1< a_2 <\cdots < a_r=\left|G\right|$. About $\mathrm{(\ref{centraltheoremeqn})}$ we collect the number $O_N(g)$, we have
\begin{eqnarray}\label{centraltheoremproof}
\begin{split}
\inn{\chi_j}{S_t(m\zeta_0)}{G}&=\dfrac{1}{\left|G\right|}\sum_{i=1}^r\Big(\sum_{O_N(g)=a_i}\chi_j(g)\big(1-\zeta(g^{-a_i})t^{a_i}\big)^{-\frac{m}{a_i}}\Big), \\
\inn{\chi_j}{\lambda_t(m\zeta_0)}{G}&=\dfrac{1}{\left|G\right|}\sum_{i=1}^r\Big(\sum_{O_N(g)=a_i}\chi_j(g)\big(1-\zeta(g^{-a_i})(-t)^{a_i}\big)^{\frac{m}{a_i}}\Big)
\end{split} 
\end{eqnarray}
Assume that $n\leq 1$ is satisfied $(n,|G/N|)=1$. By $a_1=1$ and $O_N(g)$ is a divisor of $|G/N|$ for any $g\in G$ , we have
\begin{eqnarray*}
\inn{\chi_j}{S^n(m\zeta_0)}{G}&=&\dfrac{1}{\left|G\right|}\sum_{g\in N}\chi_j(g)\binom{m+n-1}{n}\zeta^n(g^{-1})\\&=&\binom{m+n-1}{n}\inn{\chi_j}{\zeta_0^n}{G},\\
\inn{\chi_j}{\lambda^n(m\zeta_0)}{G}&=&\dfrac{1}{\left|G\right|}\sum_{g\in N}\chi_j(g)\binom{m}{n}\zeta^n(g^{-1})\\&=&\binom{m}{n}\inn{\chi_j}{\zeta_0^n}{G}.
\end{eqnarray*}
from $\mathrm{(\ref{centraltheoremproof})}$. Hence we have $S^n(m\zeta_0)=\binom{m+n-1}{n}\zeta_0^n,\ \lambda^n(m\zeta_0)=\binom{m}{n}\zeta_0^n$.
\end{proof}

\begin{exa}
Let $\zeta$ is a trivial character of $N$, and let $m$ be the multiple of |G/N|, which is written by $m'|G/N|$. then $m\zeta_0$ is a $m$ times the character of the representation which is generated by action $G$ to $G/N$. If $n\leq 1$ is satisfied $(n,|G/N|)=1$, we have
\begin{eqnarray*}
S^n(m'\Pi)&=&S^n(m\zeta_0)=\binom{m+n-1}{n}\zeta_0^n=\dfrac{1}{|G/N|}\binom{m'|G/N|+n-1}{n}\Pi ,\\
\lambda^n(m'\Pi)&=&\lambda^n(m\zeta_0)=\binom{m}{n}\zeta_0^n=\dfrac{1}{|G/N|}\binom{m'|G/N|}{n}\Pi 
\end{eqnarray*}
Calculating of this charcter $\Pi$ had also done in $\mathrm{Theorem\ \ref{burnsideregular}}$, However If $N\subset Z(G)$, then we can calculate by this method.
\end{exa}

\begin{exa}\label{ExampleHeisenberg}
We let $H_p$ be the following set.
\[ H_p:=\Bigl\{ \begin{pmatrix} 1 & a & b \\ 0 & 1 & c \\ 0 & 0 & 1 \end{pmatrix}\in GL(3,\mathbb{Z}/p\mathbb{Z})\ |\ a,b,c\in\mathbb{Z}/p\mathbb{Z} \Bigl\} 
\]
where $p$ is a prime number such that $p\neq 2$. It is said to Heisenberg group modulo $p$. It is a non commutative group, has cardinality $p^3$, and all of the element of $H_p$ except unit element has the order $p$.

Conjugate classes and character table of $H_p$ are the following statements and table where $\eta$ is a primitive $p$-th root of unity.
\begin{eqnarray*}
C(h)&:=&\Bigl\{\begin{pmatrix}1&0&h\\0&1&0\\0&0&1\end{pmatrix}\Bigl\}\ \ (h\in\mathbb{Z}/p\mathbb{Z}), \\
C(e,f)&:=&\Bigl\{\begin{pmatrix}1&e&k\\0&1&f\\0&0&1\end{pmatrix}\ \Bigl|\ k\in\mathbb{Z}/p\mathbb{Z}\Bigl\}\ \ (e,f\in\mathbb{Z}/p\mathbb{Z},\ (e,f)\neq(0,0)).
\end{eqnarray*}
\begin{center}
  \begin{tabular}{|c||c|c|} \hline
   &$C(h)$&$C(e,f)$ \\ \hline
\hline
   $\chi_{i,j}\ (i,j=0,1,\dots,p-1)$ &$1$&$\eta^{ei+fj}$\\ \hline
   $\tau_s\ (s=1,2,\dots,p-1)$ &$p\eta^{sh}$&$0$ \\ \hline
\end{tabular}
\end{center}

There are $p$ character which have one-dimension. The reason is that the quotient group $H_p/D(H_p)$ is isomorphic to the direct sum of two cycle group which has order $p$ where $D(H_p)$ is a commutator subgroup of $H_p$, which is the subgroup generated by the elements $xyx^{-1}y^{-1}$ for $x,y\in H_p$.   

We obtain the irreducible character which has dimension $p$ as follows. Let  $K$ be $K=\displaystyle\bigcup_{h=0}^{p-1}C(h) \displaystyle\cup \bigcup_{e=1}^{p-1}C(e,e)$, which is a subgroup of $H_p$ and is a direct sum of two cycle group which have order $p$ generated by two matrix
\[ \alpha=\begin{pmatrix} 1&1&0\\0&1&1\\0&0&1\end{pmatrix},\ \beta=\begin{pmatrix} 1&0&1\\0&1&0\\0&0&1\end{pmatrix}.\]
The character $\tau_s\ (s=1,2,\dots,p-1)$ is given by the induced character of following  character of $K$ to $H_p$.
\[ \gamma_s:K\rightarrow\mathbb{C},\ \gamma_s(\alpha^i\beta^j)=\eta^{js}\ (i,j=0,1,\dots,p-1).\ \]

 We consider the irreducible character $\tau_s$ for all $s=1,2,\dots,p-1$. At first, we define a map $\zeta_s:Z(H_p)\rightarrow \mathbb{C}$ as following while noted that $Z(H_p)=\displaystyle\bigcup_{h=0}^{p-1}C(h)$
\[ \zeta_s\Big(\begin{pmatrix} 1&0&h\\0&1&0\\0&0&1\end{pmatrix}\Big):=\eta^{hs}\ (h\in\mathbb{Z}/p\mathbb{Z}). \]
It is a character of $Z(H_p)$ which has one-dimension. In addition, we have $\tau_s=p({\zeta_s})_0$, hence let $n\geq 1$ be satisfied $(n,p)=1$, since $\mathrm{Theorem\ \ref{centraltheorem}}$ we obtain
\[ \lambda^n(\tau_s)=\binom{p}{n}(\zeta_s)_0^n=\dfrac{1}{p}\binom{p}{n}\tau_{n'},\ S^n(\tau_s)=\binom{p+n-1}{n}(\zeta_s)_0^n=\dfrac{1}{p}\binom{p+n-1}{n}\tau_{n'} \]
where $n'$ is satisfied that $n'=1,2,\dots,p-1$ and $ns\equiv n' \pmod p$.

Next we calculate $S^{ip}(\tau_s)$ and $\lambda^{ip}(\tau_s)$ for $i\geq 1$. Since $ \mathrm{(\ref{centrallemma2})}$, we have the following table.
\begin{center}
\begin{tabular}{|c||c|c|} \hline
&$C(h)$&$C(e,f)$ \\ \hline\hline
$\lambda_t(\tau_s)$ & $(1+\eta^{sh}t)^p$ & $1+t^p$ \\ \hline
$S_t(\tau_s)$ & $(1-\eta^{sh}t)^{-p}$ & $(1-t^p)^{-1} $ \\ \hline
\end{tabular}
\end{center}
About $\lambda^p(\tau_s)$, we obtain $\lambda^p(\tau_s)=\chi_{0,0}$ since the coefficient of $t^p$ in a polynomial which apply a element of $C(h)$ to $\lambda_t(\tau_s)$ is equal to $(\eta^{0h})(\eta^{1h})\cdots(\eta^{ph})=1$.\ Next, we calculate about $S^{ip}(\tau_s)$. If we apply each irreducible character of $H_p$ to $\mathrm{(\ref{centraltheoremproof})}$, we obtain as follows about $\chi_{i',j'}\ (i',j'=0,1,\dots,p-1)$.
\begin{eqnarray*}
& &\inn{\chi_{i',j'}}{S_t(\tau_s)}{H_p}\\
&=&\dfrac{1}{p^3}\Big(\sum_{h=0}^{p-1}(1-\eta^{sh}t)^{-p}+p\sum_{e,f,\ (e,f)\neq(0,0)}\eta^{i'e+j'f}(1-t^p)^{-1}\Big) \\
&=&\dfrac{1}{p^3}\Big(\sum_{h=0}^{p-1}(1-\eta^{sh}t)^{-p}-p(1-t^p)^{-1}+p\Big(\sum_{e=0}^p\eta^{i'e}\Big)\Big(\sum_{f=0}^p
\eta^{j'f}\Big)(1-t^p)^{-1}\\
&=&\begin{cases} \dfrac{1}{p^3}\big(\displaystyle\sum_{h=0}^{p-1}(1-\eta^{sh}t)^{-p}+p(p^2-1)(1-t^p)^{-1}\big) & ((i',j')=(0,0)) \\ \dfrac{1}{p^3}\big(\displaystyle\sum_{h=0}^{p-1}(1-\eta^{sh}t)^{-p}-p(1-t^p)^{-1}\big) & ((i',j')\neq(0,0)) \end{cases}
\end{eqnarray*}
then, the coefficient of $t^{ip}$ in $(1-\eta^{sh}t)^{-p}$ is $\binom{p+ip-1}{ip}(\eta^{sh})^{ip}=\binom{p+ip-1}{p}$ since $\mathrm{(\ref{inverseformula})}$. hence we  obtain
\begin{eqnarray}\label{centralexampleresult}
\begin{split}
S^{ip}(\tau_s)&=\dfrac{1}{p^2}\Big(\binom{p+ip-1}{ip}+(p^2-1)\Big)\chi_{0,0}\\&+\sum_{i,j\ (i,j)\neq(0,0)}\dfrac{1}{p^2}\Big(\binom{p+ip-1}{ip}-1\Big)\chi_{i,j}+\sum_{s'=1}^{p-1}a_{s'}\tau_{s'}
\end{split}
\end{eqnarray}
About the multiplicity of $\tau_s'\ (s'=1,2,\dots,p-1)$ which is $a_{s'}$, we obtain $0$ since calculating of dimension in the both side of $\mathrm{(\ref{centralexampleresult})}$.

To summarize the above, the following statement holds.
\begin{eqnarray*}
\lambda^n(\tau_s)&=&\begin{cases} \chi_{0,0} & (n=0,p), \\ \dfrac{1}{p}\ \displaystyle\binom{p}{n}\tau_{n'} & (n=1,\dots ,p-1),\\
0 & (n > p).
\end{cases}\\ 
S^n(\tau_s)&=&\begin{cases} \dfrac{1}{p^2}\Big(\ \displaystyle\binom{p+n-1}{n}+(p^2-1)\Big)\chi_{0,0}\\+\displaystyle\sum_{i,j\ (i,j)\neq(0,0)}\dfrac{1}{p^2}\Big(\binom{p+n-1}{n}-1\Big)\chi_{i,j} & (p \mid n), \\ \dfrac{1}{p}  \displaystyle\binom{p+n-1}{n}\tau_{n'} & (p\nmid n). 
\end{cases}
\end{eqnarray*}
for any $n\geq 1$ where $n'\geq 1 $ is satisfied that $n'=1,2,\dots,p-1$ and $ns\equiv n' \pmod p$.
\end{exa}

\subsection{Irreducible and reducible characters}\label{calculateresultirrirr}
For any given character $\chi$, we can write as follows.
\begin{eqnarray*}
S_t(\chi)&=&S_t(\inn{\chi_1}{\chi}{G}\chi_1+\inn{\chi_2}{\chi}{G}\chi_2+\cdots+\inn{\chi_k}{\chi}{G}\chi_k) \\
&=&S_t(\chi_1)^{\inn{\chi_1}{\chi}{G}}\cdots S_t(\chi_k)^{\inn{\chi_k}{\chi}{G}}, \\
\lambda_t(\chi)&=&\lambda_t(\inn{\chi_1}{\chi}{G}\chi_1+\inn{\chi_2}{\chi}{G}\chi_2+\cdots+\inn{\chi_k}{\chi}{G}\chi_k) \\
&=&\lambda_t(\chi_1)^{\inn{\chi_1}{\chi}{G}}\cdots \lambda_t(\chi_k)^{\inn{\chi_k}{\chi}{G}}.
\end{eqnarray*}
Therefore if we have the character of symmetric and exterior powers representation of all of the irreducible character, we obtain about the character of reducible representation too. Hence it will be important to calculate about the character of the  irreducible representation. However, we have to know the result of the multiplication of the irreducible character with each other to calculating, and it is unlikely that it will become easier in most cases. Thus, the method of described in previous section may be simpler even though the character of reducible representation.

\begin{exa}
We calculate about $\chi_1+\chi$ for any character $\chi$ where $\chi_1$ is a trivial character. We can apply about the natural representation of $S_n$ or $A_n$ which generated the action of $S_n$ or $A_n$ to a finite set $\{ 1,2,\dots, n \}$. Thus, we have

\begin{eqnarray*}
S_t(\chi_1+\chi)&=&\dfrac{1}{1-t}\chi_1S_t(\chi)=\dfrac{1}{1-t}S_t(\chi)
\end{eqnarray*}
which implies that $\inn{\chi_j}{S_t(\chi_1+\chi)}{G}=\dfrac{1}{1-t}\inn{\chi_j}{S_t(\chi)}{G}$. And we have
\[\lambda_t(\chi_1+\chi)=(\chi_1+\chi_1t)\big(\sum_{i=0}\lambda^i(\chi)t^i\big)=\chi_1+\sum_{i=1}(\lambda^{i-1}(\chi)+\lambda^i(\chi))t^i. \]
\end{exa}
\begin{exa}
Let $\chi$ be $\chi_1+\chi_2+\chi_3$ which is sum of all irreducible character of $S_3$. One has
\begin{eqnarray*}
\lambda_t(\chi)&=&\lambda_t(\chi_1)\lambda_t(\chi_2)\lambda_t(\chi_3) \\
&=&(\chi_1+\chi_1t)(\chi_1+\chi_2t)(\chi_1+\chi_3t+\chi_2t^2) \\
&=&\chi_1+(\chi_1+\chi_2+\chi_3)t+(\chi_2+\chi_2\chi_3+\chi_1\chi_3+\chi_1\chi_2)t^2\\
&+&(\chi_1\chi_2\chi_3+\chi_1\chi_2+\chi_2\chi_2)t^3+\chi_1\chi_2\chi_2t^4 \\
&=&\chi_1+(\chi_1+\chi_2+\chi_3)t+(2\chi_2+2\chi_3)t^2+(\chi_1+\chi_2+\chi_3)t^3+\chi_1t^4
\end{eqnarray*}
\end{exa}

\subsection{Calculating that use the quotient group}\label{calculateresultquotient}
Let $N$ is a normal subgroup of $G$. There is a case that the multiplicity in symmetric and exterior powers representation can compute by the quotient group $G/N$. Also in this section, let a map $\pi_N:G\rightarrow G/N$ be a natural projection. In addition, let  Cha($G$) and Cha($G/N$) be sets of all of character of $G$ and $G/N$ respectively. These sets are closed by the addition, the multiplication, $\lambda$-operation and symmetric powers operation. from $\S4$.

\begin{prop}\label{quotientpulback}
We define a map $\pi_N^*:\MapC{G/N}\rightarrow \MapC{G}$ as follows 
\[ \pi_N^*(f):=f\circ\pi\ (f\in\MapC{G/N}). \]
Then, the following statements hold.
\begin{enumerate}
\item A map $\pi_N^*$ is a $\lambda$-monomorphism.
\item We have $\inn{f_1}{f_2}{G/N}=\inn{\pi_N^*(f_1)}{\pi_N^*(f_2)}{G}$ for all $f_1,f_2\in\MapC{G/N}.$
\end{enumerate}
\end{prop}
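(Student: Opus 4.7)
The plan is to tackle the two parts separately, with part (1) reducing, via results already established in the paper, to a routine verification on Adams operations.

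For part (1), first I would check that $\pi_N^*$ is a ring homomorphism: since the ring operations on $\MapC{-}$ are pointwise, this is immediate from $(f_1+f_2)\circ\pi_N=f_1\circ\pi_N+f_2\circ\pi_N$ and the analogous identity for multiplication, together with the fact that the unit of $\MapC{G/N}$ (the constant function $1$) pulls back to the unit of $\MapC{G}$. Injectivity follows because $\pi_N$ is surjective: if $f\circ\pi_N=0$ then $f(\pi_N(g))=0$ for all $g\in G$, which means $f$ vanishes on every coset and hence is zero. To upgrade this ring monomorphism to a $\lambda$-monomorphism, I would invoke Proposition \ref{lambdafcha2}(2): since $\MapC{G}$ is a $\mathbb{C}$-algebra and therefore $\mathbb{Z}$-torsion-free, it suffices to verify that $\pi_N^*$ commutes with all Adams operations $\psi^n$.

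The verification of $\pi_N^*\circ\psi^n=\psi^n\circ\pi_N^*$ is the core computation for (1). Using the explicit description of Adams operations on $\MapC{-}$ given in Example \ref{MapGC} by equation (\ref{MapGCdef}), for any $f\in\MapC{G/N}$ and $g\in G$ I compute
\begin{eqnarray*}
\pi_N^*(\psi^n(f))(g) &=& \psi^n(f)(\pi_N(g)) = f(\pi_N(g)^n) \\
&=& f(\pi_N(g^n)) = \pi_N^*(f)(g^n) = \psi^n(\pi_N^*(f))(g),
\end{eqnarray*}
using that $\pi_N$ is a group homomorphism so $\pi_N(g)^n=\pi_N(g^n)$. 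This gives the desired commutation and completes (1).

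For part (2), I would use that every fiber of $\pi_N$ has exactly $|N|$ elements, so summing over $G$ groups together $|N|$ copies of the corresponding value on $G/N$. Explicitly,
\begin{eqnarray*}
\inn{\pi_N^*(f_1)}{\pi_N^*(f_2)}{G} &=& \dfrac{1}{|G|}\sum_{g\in G} f_1(\pi_N(g))\,f_2(\pi_N(g^{-1})) \\
&=& \dfrac{1}{|G|}\sum_{g\in G} f_1(\pi_N(g))\,f_2(\pi_N(g)^{-1}) \\
&=& \dfrac{|N|}{|G|}\sum_{\bar g\in G/N} f_1(\bar g)\,f_2(\bar g^{-1}) = \inn{f_1}{f_2}{G/N},
\end{eqnarray*}
using $\pi_N(g^{-1})=\pi_N(g)^{-1}$ and $|G|/|N|=|G/N|$. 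I do not anticipate a serious obstacle: the only mildly subtle point is invoking Proposition \ref{lambdafcha2} rather than attempting to check the $\lambda^n$ commutation directly, which would require unwinding the definition of $\lambda^n$ on $\MapC{-}$ through the isomorphism $z_t^{-1}\circ\Psi$ of Proposition \ref{Adamsmakelambda} and would be far less transparent than the Adams-operation calculation above.
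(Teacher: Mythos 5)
Your proposal is correct and follows essentially the same route as the paper: reduce the $\lambda$-commutation to the Adams operations via Proposition \ref{lambdafcha2}(2) and the $\mathbb{Z}$-torsion-freeness of $\MapC{G}$, verify $\psi^n(\pi_N^*(f))(g)=f(\pi_N(g)^n)=\pi_N^*(\psi^n(f))(g)$ directly, and for (2) collapse the sum over $G$ onto $G/N$ using that each fiber of $\pi_N$ has $|N|$ elements. The only difference is that you spell out the ring-monomorphism check that the paper dismisses as clear.
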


\begin{proof} $(1)$ It is clear that a map $\pi_N^*$ is a ring\ monomorphism, then we  show that $\lambda^n\circ\pi_N^*=\pi_N^*\circ\lambda^n$ for any $n\geq 1$ $\MapC{G}$ which is a codomain of $\pi_N^*$, is the $\mathbb{Z}$-torsion-free, then it is suffieicnt to show that $\psi^n\circ\pi_N^*=\pi_N^*\circ\psi^n$ for any $n\geq 1$ from Proposition\ \ref{lambdafcha2}. For any $f\in\MapC{G/N}, n\geq 1$ and $g\in G$. we have
\[ \psi^n(\pi_N^*(f))(g)=\pi_N^*(f)(g^n)=f\circ\pi(g^n)=f(\pi(g)^n)=\psi^n(f)(\pi(g))=\pi_N^*(\psi^n(f))(g).\]
Hence, A ring monomorphism $\pi_N^*$ is a $\lambda$-monomorphism. 

$(2)$\ For any $f_1,f_2\in\MapC{G/N}$, we calculate the left hand side as follow.
\begin{eqnarray*}
\inn{\pi_N^*(f_1)}{\pi_N^*(f_2)}{G}
&=&\dfrac{1}{\left|G\right|}\sum_{g\in G}\pi_N^*(f_1)(g)\pi_N^*(f_2)(g^{-1})\\
&=&\dfrac{1}{\left|G\right|}\sum_{g\in G}f_1(\pi(g))f_2(\pi(g)^{-1})\\
&=&\dfrac{|N|}{\left|G\right|}\sum_{\alpha\in G/N}f_1(\alpha)f_2(\alpha^{-1})=\inn{f_1}{f_2}{G/N}.
\end{eqnarray*}
which imply to finish to prove $(2)$.
\end{proof}

\begin{lemm}\label{quotientlemma}
Let $\chi$ a character of a repressentation $\rho:G\rightarrow GL(V)$. we have  following statements.
\begin{enumerate}
\item $\mathrm{Ker}(\rho)=\{ g\in G\ \mid\ \chi(g)=\chi(e) \}$.
\item $N\subset\mathrm{Ker}(\rho)$ holds if only and if $\chi(g)=\chi(e)$ holds for any $g\in N$.
\end{enumerate}
\end{lemm}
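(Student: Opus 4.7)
The plan is to prove (1) first, noting that $\chi(e) = \mathrm{Tr}(\mathrm{id}_V) = \dim V$, and then to deduce (2) as an immediate corollary. Call $n := \dim V$. The easy inclusion $\mathrm{Ker}(\rho) \subseteq \{ g \mid \chi(g) = \chi(e)\}$ is trivial: if $g \in \mathrm{Ker}(\rho)$ then $\rho(g) = \mathrm{id}_V$, hence $\chi(g) = \mathrm{Tr}(\mathrm{id}_V) = n = \chi(e)$.

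For the reverse inclusion, I would fix $g \in G$ with $\chi(g) = n$ and argue that $\rho(g) = \mathrm{id}_V$. Since $G$ is finite, $g$ has some finite order $m$, so $\rho(g)^m = \mathrm{id}_V$ and the minimal polynomial of $\rho(g)$ divides $t^m - 1$, a polynomial with distinct roots in $\mathbb{C}$. Consequently $\rho(g)$ is diagonalizable with eigenvalues $\alpha_1,\dots,\alpha_n$, each an $m$-th root of unity. The hypothesis becomes
\[ \alpha_1 + \cdots + \alpha_n = n, \qquad |\alpha_i| = 1 \text{ for all } i.\]
This is an equality case of the triangle inequality, which forces $\alpha_1 = \cdots = \alpha_n = 1$; then $\rho(g) = \mathrm{id}_V$, i.e.\ $g \in \mathrm{Ker}(\rho)$.

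Part (2) follows formally from (1): the inclusion $N \subseteq \mathrm{Ker}(\rho)$ is by definition the statement that $\rho(g) = \mathrm{id}_V$ for every $g \in N$, which by (1) is equivalent to $\chi(g) = \chi(e)$ for every $g \in N$.

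The only substantive step is the equality case of the triangle inequality, and this is where I would be most careful. A clean way to discharge it without appealing to a separate lemma is to take real parts: from $\sum_i \alpha_i = n$ one obtains $\sum_i \mathrm{Re}(\alpha_i) = n$, while $\mathrm{Re}(\alpha_i) \le |\alpha_i| = 1$ for each $i$; hence each $\mathrm{Re}(\alpha_i) = 1$, and combined with $|\alpha_i| = 1$ this yields $\alpha_i = 1$. Everything else in the argument is bookkeeping.
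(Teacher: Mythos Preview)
Your proof is correct and follows essentially the same approach as the paper's: both directions of (1) are handled identically, using that the eigenvalues of $\rho(g)$ are roots of unity and that a sum of $n$ unit-modulus complex numbers equalling $n$ forces each to be $1$, and (2) is deduced immediately from (1). Your version is in fact somewhat more careful than the paper's, which simply asserts the equality case without justification and invokes unitarity rather than explicitly arguing diagonalizability via the minimal polynomial.
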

\begin{proof} $(1)$ For any $g\in \mathrm{Ker}(\rho)$, we have $\rho(g)=\mathrm{id}_V$. hence $\chi(g)=\chi(e)$ holds.

 Let $n$ be $\dim V$, and let $\omega$ is a primitive $|G|$-th root of unity. For each $g\in G$, since a representation $\rho$ is a unitary, then there are integer numbers $a_1,\dots,a_n$ such that $\chi(g)=\omega^{a_1}+\cdots+\omega^{a_n}$. Assume that $\chi(g)=\chi(e)=n$ the we have $a_1,\dots,a_n=0$ which imply that all eigenvalue of $\rho(g)$ are $1$, then $\rho(g)=\mathrm{id}_V$ holds. Hence, we have $g\in \mathrm{Ker}(\rho)$. At last, it is clear that $(2)$ holds from $(1)$. 
\end{proof}

\begin{prop}\label{quotientimage}
We define $F(N)$ as follows.
\[ F(N):=\{ \chi\in\mathrm{Cha}(G)\ \mid\ \chi(g)=\chi(e)\ \mathrm{for\ all}\ g\in N \} \]
This set is a character of  a representation $\rho:G\rightarrow GL(V)$ which is satisfied  $N\subset \mathrm{Ker}(\rho)$, from $\mathrm{Lemma\ \ref{quotientlemma}\ (2)}$. About this, $F(N)=\pi_N^*(\mathrm{Cha}(G))$ holds.
\end{prop}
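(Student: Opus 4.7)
The plan is to prove both inclusions of the equality $F(N)=\pi_N^*(\mathrm{Cha}(G/N))$ separately (I note that the statement as printed reads $\pi_N^*(\mathrm{Cha}(G))$, but since $\pi_N^*$ has domain $\MapC{G/N}$, the intended statement must be $\pi_N^*(\mathrm{Cha}(G/N))$).

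For the inclusion $\pi_N^*(\mathrm{Cha}(G/N))\subset F(N)$, I would start from a character $\chi'\in\mathrm{Cha}(G/N)$, coming from some representation $\bar\rho:G/N\to GL(V)$. The composition $\bar\rho\circ\pi_N:G\to GL(V)$ is a representation of $G$, and its character is precisely $\pi_N^*(\chi')$, so $\pi_N^*(\chi')\in\mathrm{Cha}(G)$. The defining property is immediate: for any $g\in N$, $\pi_N(g)=e_{G/N}=\pi_N(e_G)$, so $\pi_N^*(\chi')(g)=\chi'(\pi_N(g))=\chi'(e_{G/N})=\pi_N^*(\chi')(e_G)$. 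Hence $\pi_N^*(\chi')\in F(N)$.

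For the reverse inclusion $F(N)\subset\pi_N^*(\mathrm{Cha}(G/N))$, I would take $\chi\in F(N)$ and realize it as the character of some representation $\rho:G\to GL(V)$. Since $\chi(g)=\chi(e)$ for every $g\in N$, Lemma \ref{quotientlemma}(2) gives $N\subset\mathrm{Ker}(\rho)$. This is the key hypothesis that allows the representation to descend: because $N$ is normal (and contained in $\mathrm{Ker}(\rho)$, which is automatically normal in $G$), the universal property of the quotient group yields a unique group homomorphism $\bar\rho:G/N\to GL(V)$ with $\bar\rho\circ\pi_N=\rho$. Let $\chi'$ denote the character of $\bar\rho$. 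Then for every $g\in G$,
\[ \pi_N^*(\chi')(g)=\chi'(\pi_N(g))=\mathrm{Tr}(\bar\rho(\pi_N(g)))=\mathrm{Tr}(\rho(g))=\chi(g), \]
so $\chi=\pi_N^*(\chi')\in\pi_N^*(\mathrm{Cha}(G/N))$.

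There is essentially no hard step here; the whole argument is the elementary fact that a representation factors through a quotient exactly when the normal subgroup lies in its kernel. The only point that requires a little care is to invoke Lemma \ref{quotientlemma}(2) correctly to pass from the trace condition defining $F(N)$ to the kernel condition needed to descend $\rho$ to $G/N$. Once that bridge is in place, both directions are short computations with $\pi_N^*$.
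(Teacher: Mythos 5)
Your proof is correct and follows essentially the same route as the paper: one direction descends $\rho$ to $G/N$ via Lemma \ref{quotientlemma}(2) and the universal property of the quotient, and the other composes a representation of $G/N$ with $\pi_N$ and checks the defining condition of $F(N)$. You are also right that the statement's $\pi_N^*(\mathrm{Cha}(G))$ is a typo for $\pi_N^*(\mathrm{Cha}(G/N))$, which is what the paper actually proves.
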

\begin{proof}
Let $\chi$ be the character of $F(N)$, and let $\rho$ be the representation of $G$ such that its character is $\chi$. Since $N\subset\mathrm{Ker}(\rho)$, There is a representation $\rho'$ of $G/N$ uniquely such that $\rho=\rho'\circ\pi$ holds from $\mathrm{Lemma\ \ref{quotientlemma}}$. Let $\chi'$ be a character of $\rho'$, we have $\chi=\chi'\circ\pi_N=\pi_N^*(\chi')\in \pi_N^*(\mathrm{Cha}(G/N))$.

Conversely, assume that $\chi$ belongs to $\pi_N^*(\mathrm{Cha}(G/N))$, and let $\chi'$ be $\chi=\pi_N^*(\chi')$. Then we have $\chi(g)=\pi_N^*(\chi')(g)=\chi'(e)=\chi(e)$ for any $g\in N$. Also we let $\rho':\ G/N\rightarrow GL(V)$ the representation whose character is $\chi'$. Then we have
\[ \chi=\pi_N^{*}(\chi')=\chi'\circ\pi=\mathrm{Tr}\circ(\rho'\circ\pi) \]
which implies that $\chi$ is a charcter of $\rho'\circ\pi$. Hence we have $\chi\in F(N)$.
\end{proof}

\begin{cor}\label{quotientcorollary}
The following statements holds for $f\in\MapC{G/N}$.
\begin{enumerate}
\item A map $f$ belongs to $\mathrm{Cha}(G/N)$ if and only if $\pi_{N}^*(f)$ belongs to $\mathrm{Cha}(G)$. 
\item A map $f$ is a irreducible character of $G/N$ if and only if $\pi_{N}^*(f)$ is a irreducible character of $G$.
\end{enumerate}
\end{cor}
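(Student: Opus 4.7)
The plan is to deduce both parts directly from the two technical results just proved: Proposition \ref{quotientimage}, which identifies $F(N)$ with the image $\pi_N^*(\mathrm{Cha}(G/N))$, and Proposition \ref{quotientpulback}, which says $\pi_N^*$ is an injective ring homomorphism preserving the inner product.

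For part (1), the forward direction is immediate: if $f \in \mathrm{Cha}(G/N)$, then $\pi_N^*(f) \in \pi_N^*(\mathrm{Cha}(G/N)) = F(N) \subset \mathrm{Cha}(G)$ by Proposition \ref{quotientimage}. For the converse, suppose $\pi_N^*(f) \in \mathrm{Cha}(G)$. I would first check that $\pi_N^*(f)$ actually lies in $F(N)$: for $g \in N$ one has $\pi_N^*(f)(g) = f(\pi_N(g)) = f(e_{G/N}) = \pi_N^*(f)(e_G)$, so the hypothesis of $F(N)$ is satisfied. Then Proposition \ref{quotientimage} gives $\pi_N^*(f) = \pi_N^*(f')$ for some $f' \in \mathrm{Cha}(G/N)$, and injectivity of $\pi_N^*$ (Proposition \ref{quotientpulback}(1)) forces $f = f' \in \mathrm{Cha}(G/N)$.

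For part (2), I would combine (1) with the standard irreducibility criterion $\inn{\chi}{\chi}{} = 1$ for characters. Assuming $f$ or $\pi_N^*(f)$ is already a character (each case implies the other by (1)), Proposition \ref{quotientpulback}(2) gives $\inn{f}{f}{G/N} = \inn{\pi_N^*(f)}{\pi_N^*(f)}{G}$, so the left side equals $1$ iff the right side does, which is exactly the equivalence of irreducibility.

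No step here poses a real obstacle, since everything reduces to bookkeeping on top of the two preceding propositions; the only mild subtlety is remembering to verify that $\pi_N^*(f)$ lies in $F(N)$ in the converse of (1), so that Proposition \ref{quotientimage} can be applied and injectivity of $\pi_N^*$ then pulls $f$ back into $\mathrm{Cha}(G/N)$.
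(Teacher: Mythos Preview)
Your proposal is correct and follows essentially the same approach as the paper: part (1) from Proposition~\ref{quotientimage} (plus injectivity of $\pi_N^*$), and part (2) from (1) together with Proposition~\ref{quotientpulback}(2) and the $\inn{\chi}{\chi}{}=1$ criterion. The paper's proof is a one-line citation of these two propositions, so your version simply spells out the bookkeeping, including the check that $\pi_N^*(f)\in F(N)$ in the converse of (1), which the paper leaves implicit.
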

\begin{proof}$(1)$ is clear from Proposition\ \ref{quotientimage}, and $(2)$ is given by $(1)$ and Proposition\ \ref{quotientpulback}\ (2).
\end{proof}

Let $\phi_1,\dots,\phi_{k'}$ be all of irreducible character of $G/N$. Since Corollary\ \ref{quotientcorollary}\ (2), $\pi_N^*(\phi_1),\dots,\pi_N^*(\phi_{k'})$ are irreducible character of $G$ Hence we can be written $\mathrm{Cha}$($G/N$) and image of $\pi_N^*$, $F(N)$ as follows. 
\begin{eqnarray*}
&{}&\mathrm{Cha}(G/N)=\{ n_1\phi_1+\cdots+n_{k'}\phi_{k'}\ \mid\ n_1,\dots,n_{k'}\in\mathbb{Z}_+\} \\
&{}&F(N)=\{ n_1\pi_N^*(\phi_1)+\cdots+n_{k'}\pi_N^*(\phi_{k'})\ \mid\ n_1,\dots,n_{k'}\in\mathbb{Z}_+\}.
\end{eqnarray*}

\begin{theo}\label{quotienttheorem}
Suppose that the character belongs to $F(N)$ and the character $\chi'\in\mathrm{Cha}(G/N)$ is satisfied that $\chi=\pi_N^*(\chi')$ holds. Thus, the following statements holds.
\begin{enumerate}
\item Character $\lambda^i(\chi)$ and $S^i(\chi)$ belong to $F(N)$ for any $i\geq 1$.
\item For any $i=0,1,2,\dots$ and $j=1,2,\dots, k'$, we have
\[ \inn{\pi_N^*({\phi_j})}{S^i(\chi)}{G}=\inn{\phi_j}{S^i(\chi')}{G/N},\ \inn{\pi_N^*({\phi_j})}{\lambda^i(\chi)}{G}=\inn{\phi_j}{\lambda^i(\chi')}{G/N}.\]
\end{enumerate}
\end{theo}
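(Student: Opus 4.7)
The plan is to reduce both parts to the two properties of $\pi_N^*$ already established in Proposition \ref{quotientpulback}, namely that it is a \lambdaf{monomorphism} and that it preserves the inner product. Under those properties, everything collapses to an almost mechanical computation.

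First I would handle part (1). Since $\pi_N^*$ is a \lambdaf{homomorphism}, Proposition \ref{lambdafcha2}(1) gives $\pi_N^*\circ\lambda^i = \lambda^i\circ\pi_N^*$ and $\pi_N^*\circ S^i = S^i\circ\pi_N^*$ for all $i\geq 1$. Hence
\[
\lambda^i(\chi) = \lambda^i(\pi_N^*(\chi')) = \pi_N^*(\lambda^i(\chi')), \qquad S^i(\chi) = \pi_N^*(S^i(\chi')),
\]
and since $\mathrm{Cha}(G/N)$ is closed under $\lambda^i$ and $S^i$ (as noted at the start of \S\ref{calculateresultquotient}), both $\lambda^i(\chi')$ and $S^i(\chi')$ lie in $\mathrm{Cha}(G/N)$. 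Proposition \ref{quotientimage} identifies $F(N)$ with $\pi_N^*(\mathrm{Cha}(G/N))$, so $\lambda^i(\chi),S^i(\chi) \in F(N)$.

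Next I would use part (1) together with inner-product preservation to obtain part (2). By the identity just derived,
\[
\inn{\pi_N^*(\phi_j)}{\lambda^i(\chi)}{G} = \inn{\pi_N^*(\phi_j)}{\pi_N^*(\lambda^i(\chi'))}{G},
\]
and applying Proposition \ref{quotientpulback}(2) to the right-hand side yields $\inn{\phi_j}{\lambda^i(\chi')}{G/N}$. The argument for $S^i$ is identical, replacing $\lambda^i$ by $S^i$ throughout.

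There is no real obstacle here; the content of the theorem is essentially that $\pi_N^*$ intertwines the $\lambda$- and $S$-structures while being an isometry for the inner products, and both ingredients are already in place. The only point worth double-checking during the write-up is that Proposition \ref{lambdafcha2}(1) indeed delivers compatibility with $S^n$ (not only with $\lambda^n$ and $\psi^n$), which it does via the formula $S^n = h_n(\lambda^1,\dots,\lambda^n)$ used there.
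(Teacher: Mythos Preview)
Your proof is correct and follows essentially the same approach as the paper: use that $\pi_N^*$ is a \lambdaf{homomorphism} (hence commutes with both $\lambda^i$ and $S^i$ via Proposition~\ref{lambdafcha2}(1)) to get $\lambda^i(\chi)=\pi_N^*(\lambda^i(\chi'))$ and $S^i(\chi)=\pi_N^*(S^i(\chi'))$, then apply Proposition~\ref{quotientpulback}(2) on the inner products. Your write-up is in fact slightly more careful than the paper's, since you explicitly invoke closure of $\mathrm{Cha}(G/N)$ under $\lambda^i,S^i$ and Proposition~\ref{quotientimage} to justify membership in $F(N)$, whereas the paper simply asserts $\pi_N^*(\lambda^i(\chi'))\in F(N)$.
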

\begin{proof}
$(1)$ In $\mathrm{Proposition\ \ref{quotientpulback}\ (1)}$, we proved that a map $\pi_N^*$ can be exchanged $\lambda$-operation. From $\mathrm{Proposition\ \ref{lambdafcha2}}$, it implies that $\pi_N^*$ can be exchanged symmetric\ powers\ operation too. Therefore we have
\begin{eqnarray*}\label{quotienttheoremproof1}
\lambda^i(\chi)&=&\lambda^i\circ\pi_N^*(\chi')=\pi_N^*\circ\lambda^i(\chi')\in F(N), \\
S^i(\chi)&=&S^i\circ\pi_N^*(\chi')=\pi_N^*\circ S^i(\chi')\in F(N)
\end{eqnarray*}
for any $i\geq 0$.

$(2)$ Since $\pi_N^*$ can be exchanged $\lambda$-operation and symmetric powers operation we have the following equality from $\mathrm{Proposition\ \ref{quotientpulback}\ (2)}$ for any $i=0,1,2,\dots$ and $j=1,2,\dots,k'$. 
\begin{eqnarray*}
\inn{\pi_N^*(\phi_j)}{\lambda^i(\chi)}{G}&=&\inn{\pi_N^*(\phi_j)}{\lambda^i\circ\pi_N^*(\chi')}{G} \\
&=&\inn{\pi_N^*(\phi_j)}{\pi_N^*\circ\lambda^i(\chi')}{G}=\inn{\phi_j}{\lambda^i(\chi') }{G/N}.
\end{eqnarray*}
Similarly, we have $\inn{\pi_N^*(\chi_j)}{S^i(\chi)}{G}=\inn{\chi_j}{S^i(\chi')}{G/N}$.
\end{proof}

In summary, we can be calculated about a character $\chi$ which belongs to $F(N)$ 
by the character $\chi'$ of $G/N$. In particular, any character $\chi$ which is the character of the representation $\rho$ belongs to $\mathrm{Ker}(\rho)$, then \ we can consider on the representation of $G/\mathrm{Ker}(\rho)$. In addition , this representation is faithful, hence we can assume the faithfully of the representation when we calculate the multiplicity.

\begin{exa}
Let $\Pi$ is a character which is generated by action $G$ to $G/N$. $\Pi$ belongs to $F(N)$, and correspond to a character of a regular representation of $G/N$.
\end{exa}

\begin{exa}\label{ExampleS4}
We consider irreducible characters of $S_4$ which has dimension 1 or 2. Conjugate classes and irreducible character which has dimension 1 or 2 is the following statement and table.
\begin{eqnarray*}
(1)\in C_1,\ (1\ 2)\in C_2,\ (1\ 2\ 3)\in C_3,\ (1\ 2\ 3\ 4)\in C_4,\ (1\ 2)(3\ 4)\in C_5.
\end{eqnarray*}
\begin{center}
  \begin{tabular}{|c||c|c|c|c|c|} \hline
   &$C_1$&$C_2$&$C_3$&$C_4$&$C_5$ \\ \hline\hline
   $\phi_1$ &$1$&$1$&$1$&$1$&$1$ \\ \hline
   $\phi_2$ &$1$ &$-1$&$1$&$-1$&$1$ \\ \hline
   $\phi_3$ &$2$ &$0$ &$-1$ &$ 0$ &$ 2$  \\ \hline
  \end{tabular}
\end{center}
Now, we let $V$ is $C_1 \cup C_5$. Thus $V$ is a normal subgroup of $S_4$, and the quotient group $S_4/V$ is isomorphic to $S_3$. And $\phi_1,\phi_2,\phi_3\in F(V)$ holds, which correspond to $\chi_1, \chi_2, \chi_3$ by$\pi_V^*:\MapC{S_3}\rightarrow F(V)$ where characters $\chi_1, \chi_2, \chi_3$ are irreducible character of $S_3$ (See  $\mathrm{Example\ \ref{ExampleS31}}$), Therefore the character sum of some $\phi_1,\phi_2,\phi_3$ can be considered in $S_3$. In particular, we can calculate about $\phi_3$ from $\mathrm{Example\ \ref{ExampleS31} and Example\ \ref{ExampleS32}}$, then we have
\begin{eqnarray*}
\lambda_t(\phi_3)&=&\phi_1+\phi_3t+\phi_2t^2, \\
S_t(\phi_3)&=&\dfrac{1}{(1-t^2)(1-t^3)}\phi_1+\dfrac{t^3}{(1-t^2)(1-t^3)}\phi_2+\dfrac{t+t^2}{(1-t^2)(1-t^3)}\phi_3.
\end{eqnarray*}
\end{exa}

\newpage

\section{Calculation of some finite groups}\label{somecalculate}
In this chapter, we calculate characters of the symmetric powers representation and the exterior powers representation of some finite group, about symmetric group $S_3$ and $S_4$, alternative group $A_4$ and $A_5$, non-commutative group that has cardinality  of 21 $G_{21}$, the dihedral groups $D_{2n}$, the generalized quaternion groups $Q_{4n}$ and Heisenberg groups $H_p$.
 
 At each some finite group, we give the conjugate class and character table. Next, we give the conjugate class where a inverse element of its belongs, a cardinality of conjugate class and a order of a element of conjugate class. These information of a finite group are used on calculate about inner product. 
 
 We calculate for only the irreducible representation with has dimension more than $2$ is  here. The reason is because if we have the result about irreducible representations, we can calculate about reducible representations from $\S\ref{calculateresultirrirr}$. In addition, The result of one dimension representation was given in \ref{calculateresult1st}, so we keep it to  only give a order of the multiplication as a element of  $\MapC{G}$. Some finite group and its character was described in $\S\ref{calculateresult1}$, however we will write again.

Basically, the method of calculation about $S_3$, $A_4$, $G_{21}$, $S_4$ and $A_5$, it will describe in the chapter $\S\ref{resultS3}$ to $\S\ref{resultA5}$, is the same as Example\ \ref{ExampleS31} and Example\ \ref{ExampleS32} At first, we calculate the character of exterior powers representation using by (\ref{Formula1}) and (\ref{Formula4}), next we calculate $\lambda_t(\chi)(g)$ and $S_t(\chi)(g)$ for each $g\in G$, and compute the generating function of multiplicity of irreducible component  (\ref{lambdainn}) for any irreducible character.

\subsection{A symmetric group $S_3$}\label{resultS3}
We have already said about $S_3$ in Example \ref{ExampleS31} and Example\ \ref{ExampleS32}.

Conjugate classes and the character table of $S_3$ is the following statement and table.
\begin{eqnarray*}
C_1=\{ (1)\},\ C_2=\{ (1\ 2),(1\ 3),(2\ 3) \},\ C_3=\{ (1\ 2\ 3),(1\ 3\ 2)\}.
\end{eqnarray*}
\begin{center}
\begin{tabular}{|c||c|c|c|c|}\hline
     &$C_1$&$C_2$&$C_3$&Remarks\\ \hline
\hline   
   Inverse &$C_1$&$C_2$&$C_3$& \\ \hline
   Cardinality&$1$&3&$2$& \\ \hline   
   Order&$1$&$ 2$&$3$& \\  \hline
\hline
   $\chi_1$ &$1$&$ 1$&$1$&It is a trivial.\\ \hline
   $\chi_2$ &$1$&$-1$&$1$&It has order $2$. \\ \hline
   $\chi_3$ &$2$ &$ 0$&$-1$& \\ \hline
\end{tabular}
\end{center}
\begin{itemize}
\item About a character $\chi_3$

\begin{tabular}{|c||c|c|c|}\hline
   &$C_1$&$C_2$&$C_3$\\ \hline\hline
   $\chi_3$ &$2$& $0$ &$-1$\\ \hline
   $\lambda_t(\chi_3)$ &$(1+t)^2$ &$1-t^2$&$1-t+t^2$ \\ \hline
   $S_t(\chi_3)$ &$\dfrac{1}{(1-t)^2}$ & $\dfrac{1}{1-t^2}$ & $\dfrac{1}{1+t+t^2}$ \\ \hline
\end{tabular}
\begin{eqnarray*}
\lambda_t(\chi_3)&=&\chi_1+\chi_3 t+\chi_2 t^2. \\
S_t(\chi_3)&=&\dfrac{1}{(1-t^2)(1-t^3)}\chi_1+\dfrac{t^3}{(1-t^2)(1-t^3)}\chi_2+\dfrac{t}{(1-t)(1-t^3)}\chi_3 \\
&=&\dfrac{1}{(1-t^2)(1-t^3)}\Big(\chi+t^3\chi_2+(t+t^2)\chi_3\Big).
\end{eqnarray*}

\end{itemize}

\subsection{An alternating group $A_4$}\label{resultA4}

Conjugate classes and the character table of $A_4$ is the following statement and table where $\omega$ is a primitive $3$-th root of unity.

\begin{eqnarray*}
(1)\in C_1,\ (1\ 2)(3\ 4)\in C_2,\ (1\ 2\ 3)\in C_3,\ (1\ 3\ 2)\in C_4.
\end{eqnarray*}
\begin{center}
\begin{tabular}{|c||c|c|c|c|c|}\hline
               &$C_1$&$C_2$&$C_3$&$C_4$&Remarks\\ \hline
\hline
   Inverse&$C_1$&$C_2$&$C_4$&$C_3$&\\ \hline   
   Cardinality& $1$& $3$ & $4$ & $4$ & \\ \hline
   Order&$1$ & $2$ &$3$ &$ 3$& \\ \hline
\hline
   $\chi_1$ &$1$ & $1$ &$1$ &$ 1$& It is a trivial. \\ \hline
   $\chi_2$ &$1$ & $1$ &$\omega$ &$\omega^2$& It has order 3.\\ \hline
   $\chi_3$ &$1$ & $1$ &$\omega^2$& $\omega$& It has order 3. \\ \hline
   $\chi_4$ &$3$ & $-1$& $0$& $0$ &\\ \hline
\end{tabular}
\end{center}

\begin{itemize}
\item About a character $\chi_4$

\begin{tabular}{|c||c|c|c|c|}\hline
               &$C_1$&$C_2$&$C_3$&$C_4$\\ \hline\hline
   $\chi_4$ &$3$& $-1$ &$0$ &$0$\\ \hline
   $\lambda_t(\chi_4)$ &$(1+t)^3$ &$(1-t)(1-t^2)$&$1+t^3$&$1+t^3$ \\ \hline
   $S_t(\chi_4)$ &$\dfrac{1}{(1-t)^3}$ & $\dfrac{1}{(1+t)(1-t^2)}$ & $\dfrac{1}{1-t^3}$& $\dfrac{1}{1-t^3}$ \\ \hline
\end{tabular}

\begin{eqnarray*}
\lambda_t(\chi_4)&=&\chi_1+\chi_4 t+\chi_4t^2 +\chi_1 t^3. \\
S_t(\chi_4)&=&\dfrac{1-t^2+t^4}{(1-t^2)^2(1-t^3)}\chi_1+\dfrac{t^2}{(1-t^2)^2(1-t^3)}\chi_2\\
&+&\dfrac{t^2}{(1-t^2)^2(1-t^3)}\chi_3+\dfrac{t}{(1+t)^2(1-t^3)}\chi_4 \\
&=&\dfrac{1}{(1-t^2)^2(1-t^3)}\Big((1-t^2+t^4)\chi_1+t^2\chi_2+t^2\chi_3+(t+t^2+t^3)\chi_4\Big).
\end{eqnarray*}

\end{itemize}

\subsection{A non-commutative group $G_{21}$ which has order 21}\label{resultG21}
We let $G_{21}$ be a finite group that is generated by two elements $x$ and $y$ which is satisfied relations $x^7=y^3=e$ and $y^{-1}xy=x^2$. This group has order 21, which has the most small of odd cardinality in the non-commutative group.

Conjugate classes and the character table of $G_{21}$ is the following statement and table where $\omega$ is a primitive $3$-th root of unity and $\eta$ is a primitive $7$-root of unity. In addition, we let the number $a$ and $b$ be satisfied equalities $a:=\eta+\eta^2+\eta^4=(-1+i\sqrt{7})/2$ and $b:=\eta^3+\eta^5+\eta^6=(-1-i\sqrt{7})/2$. we have $a+b=-1$ and $ab=2.$
\begin{eqnarray*}
C_1=\{ e\},\ C_2=\{ x,x^2,x^4 \},\ C_3=\{ x^3,x^5,x^6\},\\
C_4=\{ x^i y\ |\ i=0,\dots,5 \},\ C_5=\{ x^i y^2\ |\ i=0,\dots,5 \}.
\end{eqnarray*}
\begin{center}
\begin{tabular}{|c||c|c|c|c|c||c|}\hline
               &$C_1$&$C_2$&$C_3$&$C_4$&$C_5$&Remarks\\ \hline
\hline
   Inverse&$C_1$&$C_3$&$C_2$&$C_5$&$C_4$& \\ \hline   
   Cardinality&$1$ &$3$&$3$&$7$&$7$&\\ \hline
   Order&$1$ &$7$ &$7$&$3$ &$3$& \\ \hline
\hline
   $\chi_1$ &$1$ &$1$ &$1$&$1$ &$1$& It is a trivial, and has order 1. \\ \hline
   $\chi_2$ &$1$ &$1$& $1$& $\omega$ & $\omega^2$&It has order 3. \\ \hline
   $\chi_3$ &$1$ &$1$& $1$ & $\omega^2$ & $\omega$&It has order 3. \\ \hline
   $\chi_4$ & $3$ &$a$&$b$& $0$ &$0$& \\ \hline
   $\chi_5$ & $3$ &$b$&$a$&$0$ & $0$& \\ \hline
\end{tabular}
\end{center}

\begin{itemize}

\item About a character $\chi_4$ \\

\begin{tabular}{|c||c|c|c|c|c|}\hline
               &$C_1$&$C_2$&$C_3$\\ \hline\hline
   $\chi_4$ & 3 &$a$&$b$\\ \hline
   $\lambda_t(\chi_4)$ &$(1+t)^3$ &$1+at+bt^2+t^3$&$1+bt+at^2+t^3$\\ \hline
   $S_t(\chi_4)$ &$\dfrac{1}{(1-t)^3}$ & $\dfrac{1}{1-at+bt^2-t^3}$&$\dfrac{1}{1-bt+at^2-t^3}$ \\ \hline
\end{tabular}

\begin{tabular}{|c||c|c|c|c|c|}\hline
               &$C_4$&$C_5$\\ \hline\hline
   $\chi_4$ & 0 &0 \\ \hline
   $\lambda_t(\chi_4)$ &$1+t^3$&$1+t^3$ \\ \hline
   $S_t(\chi_4)$  & $\dfrac{1}{1-t^3}$& $\dfrac{1}{1-t^3}$ \\ \hline
\end{tabular}

\begin{eqnarray*}
\lambda_t(\chi_4)&=&\chi_1+\chi_4 t+\chi_5 t^2+\chi_1 t^3. \\
S_t(\chi_4)&=&\dfrac{1-t+t^4-t^7+t^8}{(1-t)(1-t^3)(1-t^7)}\chi_1+\dfrac{t^4}{(1-t)(1-t^3)(1-t^7)}\chi_2\\
&+&\dfrac{t^4}{(1-t)(1-t^3)(1-t^7)}\chi_3+\dfrac{t-t^2+t^4}{(1-t)^2(1-t^7)}\chi_4\\
&+&\dfrac{t^2-t^4+t^5}{(1-t)^2(1-t^7)}\chi_5 \\
&=&\dfrac{1}{(1-t)(1-t^3)(1-t^7)}\Big((1-t+t^4-t^7+t^8)\chi_1+t^4\chi_2+t^4\chi_3\\
&+&(t+t^5+t^6)\chi_4+(t^2+t^3+t^7)\chi_5\Big).
\end{eqnarray*}

\item About a character $\chi_5$ \\

\begin{tabular}{|c||c|c|c|c|c|}\hline
               &$C_1$&$C_2$&$C_3$\\ \hline\hline
   $\chi_4$ & 3 &$b$&$a$\\ \hline
   $\lambda_t(\chi_4)$ &$(1+t)^3$ &$1+bt+at^2+t^3$&$1+at+bt^2+t^3$\\ \hline
   $S_t(\chi_3)$ &$\dfrac{1}{(1-t)^3}$ & $\dfrac{1}{1-bt+at^2-t^3}$&$\dfrac{1}{1-at+bt^2-t^3}$ \\ \hline
\end{tabular}

\begin{tabular}{|c||c|c|c|c|c|}\hline
               &$C_4$&$C_5$\\ \hline\hline
   $\chi_4$ & 0 &0 \\ \hline
   $\lambda_t(\chi_4)$ &$1+t^3$&$1+t^3$ \\ \hline
   $S_t(\chi_3)$ & $\dfrac{1}{1-t^3}$& $\dfrac{1}{1-t^3}$ \\ \hline
\end{tabular}

\begin{eqnarray*}
\lambda_t(\chi_5)&=&\chi_1+\chi_5 t+\chi_4 t^2+\chi_1 t^3. \\
S_t(\chi_4)&=&\dfrac{1-t+t^4-t^7+t^8}{(1-t)(1-t^3)(1-t^7)}\chi_1+\dfrac{t^4}{(1-t)(1-t^3)(1-t^7)}\chi_2\\
&+&\dfrac{t^4}{(1-t)(1-t^3)(1-t^7)}\chi_3+\dfrac{t^2-t^4+t^5}{(1-t)^2(1-t^7)}\chi_4\\
&+&\dfrac{t-t^2+t^4}{(1-t)^2(1-t^7)}\chi_5 \\
&=&\dfrac{1}{(1-t)(1-t^3)(1-t^7)}\Big((1-t+t^4-t^7+t^8)\chi_1+t^4\chi_2+t^4\chi_3\\
&+&(t^2+t^3+t^7)\chi_4(t+t^5+t^6)\chi_5\Big).
\end{eqnarray*}
\end{itemize}

\subsection{A symmetric group $S_4$}\label{resultS4}
Conjugate classes and the character table of $S_4$ is the following statement and table (\cite{bu-2}\ p.287).
\begin{eqnarray*}
(1)\in C_1,\ (1\ 2)\in C_2,\ (1\ 2\ 3)\in C_3,\ (1\ 2\ 3\ 4)\in C_4,\ (1\ 2)(3\ 4)\in C_5.
\end{eqnarray*}
\begin{center}
  \begin{tabular}{|c||c|c|c|c|c|c|} \hline   
   &$C_1$&$C_2$&$C_3$&$C_4$&$C_5$&Remarks \\ \hline
\hline
   Inverse&$C_1$&$C_2$&$C_3$&$C_4$&$C_5$& \\ \hline
   Cardinality&$1$&$6$&$8$&$6$&$3$&\\ \hline
   Order&$1$&$2$&$3$&$4$&$2$& \\ \hline
\hline
   $\chi_1$ &$1$&$1$&$1$&$1$&$1$& It is a trivial. \\ \hline
   $\chi_2$ &$1$ &$-1$&$1$&$-1$&$1$&It has order 2. \\ \hline
   $\chi_3$ &$3$ & $1$& $0$& $-1$&$-1$ &\\ \hline
   $\chi_4$ &$3$ &$-1$& $0$ &$ 1$ &$-1$ &$\chi_4=\chi_2\chi_3.$ \\ \hline
   $\chi_5$ &$2$ &$0$ &$-1$ &$ 0$ &$ 2$  &$\chi_5\in F(C_1\cup C_5).$\\ \hline
  \end{tabular}
\end{center}

\begin{itemize}

\item About a character $\chi_3$

\begin{tabular}{|c||c|c|c|c|c|}\hline
               &$C_1$&$C_2$&$C_3$\\ \hline\hline
    $\chi_3$ &$3$ & $1$& $0$\\ \hline
   $\lambda_t(\chi_3)$ &$(1+t)^3$&$(1+t)(1-t^2)$&$1+t^3$ \\ \hline
   $S_t(\chi_3)$ &$\dfrac{1}{(1-t)^3}$ & $\dfrac{1}{(1-t)(1-t^2)}$ & $\dfrac{1}{1-t^3}$\\ \hline
\end{tabular}

\begin{tabular}{|c||c|c|c|c|c|}\hline
               &$C_4$&$C_5$\\ \hline\hline
    $\chi_3$ & $-1$&$-1$ \\ \hline
   $\lambda_t(\chi_3)$ &$(1-t)(1+t^2)$&$(1-t)(1-t^2)$ \\ \hline
   $S_t(\chi_3)$ & $\dfrac{1}{(1+t)(1+t^2)}$&$\dfrac{1}{(1+t)(1-t^2)}$ \\ \hline
\end{tabular}

\begin{eqnarray*}
\lambda_t(\chi_3)&=&\chi_1+\chi_3t+\chi_4t^2+\chi_2t^3. \\
S_t(\chi_3)&=&\dfrac{1}{(1-t^2)(1-t^3)(1-t^4)}\chi_1+\dfrac{t^6}{(1-t^2)(1-t^3)(1-t^4)}\chi_2\\
&+&\dfrac{t}{(1-t)(1-t^2)(1-t^4)}\chi_3+\dfrac{t^3}{(1-t)(1-t^2)(1-t^4)}\chi_4\\&+&\dfrac{t^2}{(1-t^2)^2(1-t^3)}\chi_5 \\
&=&\dfrac{1}{(1-t^2)(1-t^3)(1-t^4)}\Big(\chi_1+t^6\chi_2+(t+t^2+t^3)\chi_3\\
&+&(t^3+t^4+t^5)\chi_4+(t^2+t^4)\chi_5\Big).
\end{eqnarray*} 

\item About a character $\chi_4$

\begin{tabular}{|c||c|c|c|c|c|}\hline
               &$C_1$&$C_2$&$C_3$\\ \hline\hline
    $\chi_4$ &$3$ & $-1$& $0$\\ \hline
   $\lambda_t(\chi_4)$ &$(1+t)^3$&$(1-t)(1-t^2)$&$1+t^3$\\ \hline
   $S_t(\chi_4)$ &$\dfrac{1}{(1-t)^3}$ & $\dfrac{1}{(1+t)(1-t^2)}$ & $\dfrac{1}{1-t^3}$ \\ \hline
\end{tabular}

\begin{tabular}{|c||c|c|c|c|c|}\hline
               &$C_4$&$C_5$\\ \hline\hline
    $\chi_4$ & $1$&$-1$ \\ \hline
   $\lambda_t(\chi_4)$ &$(1+t)(1+t^2)$&$(1-t)(1-t^2)$ \\ \hline
   $S_t(\chi_4)$ & $\dfrac{1}{(1-t)(1+t^2)}$& $\dfrac{1}{(1+t)(1-t^2)}$ \\ \hline
\end{tabular}
\begin{eqnarray*}
\lambda_t(\chi_4)&=&\chi_1+\chi_4t+\chi_4t^2+\chi_1t^3. \\
S_t(\chi_4)&=&\dfrac{1-t^3+t^6}{(1-t^2)(1-t^3)(1-t^4)}\chi_1+\dfrac{t^3}{(1-t^2)(1-t^3)(1-t^4)}\chi_2 \\
&+&\dfrac{t^2}{(1-t)(1-t^2)(1-t^4)}\chi_3+\dfrac{t-t^2+t^3}{(1-t)(1-t^2)(1-t^4)}\chi_4\\
&+&\dfrac{t^2+t^4}{(1-t^2)^2(1-t^3)}\chi_5\\
&=&\dfrac{1}{(1-t^2)(1-t^3)(1-t^4)}\Big((1-t^3+t^6)\chi_1+t^3\chi_2\\
&+&(t^2+t^3+t^4)\chi_3+(t+t^3+t^5)\chi_4+(t^2+t^4)\chi_5\Big).
\end{eqnarray*} 

\item About a character $\chi_5$, which is described in Example \ref{ExampleS4}, it corresponds to 2-dimension irreducible character of $S_3$.

\begin{tabular}{|c||c|c|c|c|c|}\hline
               &$C_1$&$C_2$&$C_3$&$C_4$&$C_5$\\ \hline\hline
    $\chi_3$&$2$&$0$&$-1$&$0$&$2$ \\ \hline
   $\lambda_t(\chi_5)$&$(1+t)^2$&$1-t^2$&$1-t+t^2$&$1-t^2$&$(1+t)^2$ \\ \hline
   $S_t(\chi_5)$ &$\dfrac{1}{(1-t)^2}$ & $\dfrac{1}{1-t^2}$ & $\dfrac{1}{1+t+t^2}$& $\dfrac{1}{1-t^2}$& $\dfrac{1}{(1-t)^2}$ \\ \hline
\end{tabular}

\begin{eqnarray*}
\lambda_t(\chi_5)&=&\chi_1+\chi_5t+\chi_2t^2. \\
S_t(\chi_5)&=&\dfrac{1}{(1-t^2)(1-t^3)}\chi_1+\dfrac{t^3}{(1-t^2)(1-t^3)}\chi_2+\dfrac{t}{(1-t)(1-t^3)}\chi_5 \\
&=&\dfrac{1}{(1-t^2)(1-t^3)}\Big(\chi_1+t^3\chi_2+(t+t^2)\chi_5\Big)
\end{eqnarray*}
\end{itemize}

\subsection{An alternating group $A_5$}\label{resultA5}
An alternating group $A_5$ is one of simple non-commutative group. Conjugate classes and the character table of $A_5$ is the following statement and table where $\eta$ is the primitive $5$-th root of unity. And we let numbers $a$ and $b$ be  $a:=\eta+\eta^{-1}=(-1+\sqrt{5})/2$ and $\ b:=\eta^2+\eta^{-2}=(-1-\sqrt{5})/2$. It is satisfied $a+b=ab=-1$ (\cite{bu-2}\ p.288).
\begin{eqnarray*}
(1)\in C_1,\ (1\ 2)(3\ 4)\in C_2,\ (1\ 2\ 3)\in C_3,\ (1\ 2\ 3\ 4\ 5)\in C_4,\ (1\ 3\ 2\ 4\ 5)\in C_5.
\end{eqnarray*}

\begin{center}
  \begin{tabular}{|c||c|c|c|c|c|c|} \hline
   &$C_1$&$C_2$&$C_3$&$C_4$&$C_5$&Remarks \\ \hline
\hline
   Inverse&$C_1$&$C_2$&$C_3$&$C_4$&$C_5$& \\ \hline
   Cardinality&$1$&$15$&$20$&$12$&$12$& \\ \hline
   Order&$1$&$2$&$3$&$5$&$5$& \\ \hline
\hline
   $\chi_1$ &$1$&$1$&$1$&$1$&$1$&It is a trivial.\\ \hline
   $\chi_2$ &$4$&$0$&$1$&$-1$&$-1$&\\ \hline
   $\chi_3$ &$5$&$1$&$-1$&$0$&$0$& \\ \hline
   $\chi_4$ &$3$ &$-1$& $0$ &$-b$&$-a$& \\ \hline
   $\chi_5$ &$3$ &$-1$ &$0$ &$-a$&$-b$& \\ \hline
  \end{tabular}
\end{center}

\begin{itemize}
\item About $\chi_2$

\begin{tabular}{|c||c|c|c|c|c|}\hline
               &$C_1$&$C_2$&$C_3$\\ \hline\hline
   $\chi_2$ &$4$& $0$ &$1$\\ \hline
   $\lambda_t(\chi_2)$ &$(1+t)^4$ &$(1-t^2)^2$&$(1+t)(1+t^3)$ \\ \hline
   $S_t(\chi_2)$ &$\dfrac{1}{(1-t)^4}$ & $\dfrac{1}{(1-t^2)^2}$ & $\dfrac{1}{(1-t)(1-t^3)}$\\ \hline
\end{tabular}

\begin{tabular}{|c||c|c|c|c|c|}\hline
               &$C_4$&$C_5$\\ \hline\hline
   $\chi_2$ &$-1$ &$-1$\\ \hline
   $\lambda_t(\chi_2)$ &$1-t+t^2-t^3+t^4$&$1-t+t^2-t^3+t^4$ \\ \hline
   $S_t(\chi_2)$ &$\dfrac{1}{1+t+t^2+t^3+t^4}$& $\dfrac{1}{1+t+t^2+t^3+t^4}$ \\ \hline
\end{tabular}
\begin{eqnarray*}
\lambda_t(\chi_2)&=&\chi_1+\chi_2t+(\chi_4+\chi_5)t^2+\chi_2t^3+\chi_1t^4. \\
S_t(\chi_2)&=&\dfrac{1-t^2+t^4-t^6+t^8}{(1-t^2)^2(1-t^3)(1-t^5)}\chi_1\\
&+&\dfrac{t+t^6}{(1-t)(1-t^2)(1-t^3)(1-t^5)}\chi_2+\dfrac{t^2}{(1-t)(1-t^2)^2(1-t^3)}\chi_3\\
&+&\dfrac{t^3}{(1-t)(1-t^2)^2(1-t^5)}\chi_4+\dfrac{t^3}{(1-t)(1-t^2)^2(1-t^5)}\chi_5\\
&=&\dfrac{1}{(1-t^2)^2(1-t^3)(1-t^5)}\Big((1-t^2+t^4-t^6+t^8)\chi_1\\
&+&(t+t^2+t^6+t^7)\chi_2+(t^2+t^3+t^4+t^5+t^6)\chi_3\\
&+&(t^3+t^4+t^5)\chi_4+(t^3+t^4+t^5)\chi_5\Big).
\end{eqnarray*} 

\item About $\chi_3$

\begin{tabular}{|c||c|c|c|c|c|}\hline
               &$C_1$&$C_2$&$C_3$\\ \hline\hline
   $\chi_3$ &$5$& $1$ &$-1$\\ \hline
   $\lambda_t(\chi_3)$ &$(1+t)^5$ &$(1+t)(1-t^2)^2$&$(1-t+t^2)(1+t^3)$ \\ \hline
   $S_t(\chi_3)$ &$\dfrac{1}{(1-t)^5}$ & $\dfrac{1}{(1-t)(1-t^2)^2}$ & $\dfrac{1}{(1+t+t^2)(1-t^3)}$ \\ \hline
\end{tabular}

\begin{tabular}{|c||c|c|c|c|c|}\hline
               &$C_4$&$C_5$\\ \hline\hline
   $\chi_3$  &$0$ &$0$\\ \hline
   $\lambda_t(\chi_3)$ &$1+t^5$&$1+t^5$ \\ \hline
   $S_t(\chi_3)$ & $\dfrac{1}{1-t^5}$& $\dfrac{1}{1-t^5}$ \\ \hline
\end{tabular}
\begin{eqnarray*}
\lambda_t(\chi_3)&=&\chi_1+\chi_3t+(\chi_2+\chi_4+\chi_5)t^2+(\chi_2+\chi_4+\chi_5)t^3+\chi_3t^4+\chi_1t^5.\\
S_t(\chi_3)&=&\dfrac{1-t^2+t^4+t^5+t^6-t^8+t^{10}}{(1-t^2)^2(1-t^3)^2(1-t^5)}\chi_1+\dfrac{t^2+t^3-t^4+t^5+t^6}{(1-t)^2(1-t^3)^2(1-t^5)}\chi_2\\
&+&\dfrac{t+t^2-t^3+t^4+t^5}{(1-t)(1-t^2)^2(1-t^3)^2}\chi_3 
+\dfrac{t^3}{(1-t)^2(1-t^2)^2(1-t^5)}\chi_4\\
&+&\dfrac{t^3}{(1-t)^2(1-t^2)^2(1-t^5)}\chi_5 \\
&=&\dfrac{1}{(1-t^2)^2(1-t^3)^2(1-t^5)}\Big((1-t^2+t^4+t^5+t^6-t^8+t^{10})\chi_1\\ &+&(t^2+3t^3+2t^4+2t^6+3t^7+t^8)\chi_2 \\
&+&(t+2t^2+t^3+2t^4+3t^5+2t^6+t^7+2t^8+t^9)\chi_3\\
&+&(t^3+2t^4+3t^5+2t^6+t^7)\chi_4 \\
&+&(t^3+2t^4+3t^5+2t^6+t^7)\chi_5\Big).
\end{eqnarray*} 

\item About $\chi_4$

\begin{tabular}{|c||c|c|c|}\hline
               &$C_1$&$C_2$&$C_3$\\ \hline\hline
   $\chi_4$ &$3$& $-1$ &$0$\\ \hline
   $\lambda_t(\chi_4)$ &$(1+t)^3$ &$(1-t)(1-t^2)$&$1+t^3$\\ \hline
   $S_t(\chi_4)$ &$\dfrac{1}{(1-t)^3}$ & $\dfrac{1}{(1+t)(1-t^2)}$ & $\dfrac{1}{(1-t^3)}$ \\ \hline
\end{tabular}

\begin{tabular}{|c||c|c|c|c|c|}\hline
               &$C_4$&$C_5$\\ \hline\hline
   $\chi_4$ &$-b$ &$-a$\\ \hline
   $\lambda_t(\chi_4)$ &$1-bt-bt^2+t^3$&$1-at-at^2+t^3$ \\ \hline
   $S_t(\chi_4)$ & $\dfrac{1}{1+bt-bt^2-t^3}$& $\dfrac{1}{1+at-at^2-t^3}$ \\ \hline
\end{tabular}
\begin{eqnarray*}
\lambda_t(\chi_4)&=&\chi_1+\chi_4t+\chi_4t^2+\chi_1t^3.\\
S_t(\chi_4)&=&\dfrac{1+t-t^3-t^4-t^5+t^7+t^8}{(1+t)(1-t^2)(1-t^3)(1-t^5)}\chi_1+\dfrac{t^3}{(1-t)(1-t^3)(1-t^5)}\chi_2\\
&+&\dfrac{t^2}{(1-t^2)^2(1-t^3)}\chi_3 
+\dfrac{t-t^3+t^5}{(1-t^2)^2(1-t^5)}\chi_4+\dfrac{t^3}{(1-t^2)^2(1-t^5)}\chi_5 \\
&=&\dfrac{1}{(1+t)(1-t^2)(1-t^3)(1-t^5)}\Big((1+t-t^3-t^4-t^5+t^7+t^8)\chi_1 \\
&+&(t^3+2t^4+t^5)\chi_2+(t^2+t^3+t^4+t^5+t^6)\chi_3\\
&+&(t+t^2-t^4+t^6)\chi_4+(t^3+t^4+t^5)\chi_5\Big).
\end{eqnarray*} 

\item About $\chi_5$

\begin{tabular}{|c||c|c|c|}\hline
               &$C_1$&$C_2$&$C_3$\\ \hline\hline
   $\chi_5$ &$3$& $-1$ &$0$\\ \hline
   $\lambda_t(\chi_5)$ &$(1+t)^3$ &$(1+t)(1-t^2)$&$1+t^3$ \\ \hline
   $S_t(\chi_5)$ &$\dfrac{1}{(1-t)^3}$ & $\dfrac{1}{(1-t)(1-t^2)}$ & $\dfrac{1}{(1-t^3)}$ \\ \hline
\end{tabular}

\begin{tabular}{|c||c|c|}\hline
               &$C_4$&$C_5$\\ \hline\hline
   $\chi_5$ &$-a$ &$-b$\\ \hline
   $\lambda_t(\chi_5)$ &$1-at-at^2+t^3$&$1-bt-bt^2+t^3$ \\ \hline
   $S_t(\chi_5)$ & $\dfrac{1}{1+at-at^2-t^3}$& $\dfrac{1}{1+bt-bt^2-t^3}$ \\ \hline
\end{tabular}

\begin{eqnarray*}
\lambda_t(\chi_5)&=&\chi_1+\chi_5t+\chi_5t^2+\chi_1t^3.\\
S_t(\chi_5)&=&\dfrac{1+t-t^3-t^4-t^5+t^7+t^8}{(1+t)(1-t^2)(1-t^3)(1-t^5)}\chi_1+\dfrac{t^3}{(1-t)(1-t^3)(1-t^5)}\chi_2\\
&+&\dfrac{t^2}{(1-t^2)^2(1-t^3)}\chi_3 
+\dfrac{t^3}{(1-t^2)^2(1-t^5)}\chi_4\\&+&\dfrac{1-t^3+t^5}{(1-t^2)^2(1-t^5)}\chi_5 \\
&=&\dfrac{1}{(1+t)(1-t^2)(1-t^3)(1-t^5)}\Big((1+t-t^3-t^4-t^5+t^7+t^8)\chi_1 \\ &+&(t^3+2t^4+t^5)\chi_2+(t^2+t^3+t^4+t^5+t^6)\chi_3\\&+&(t^3+t^4+t^5)\chi_4+(t+t^2-t^4+t^6)\chi_5\Big).
\end{eqnarray*} 
\end{itemize}

\subsection{The dihedral group $D_{2n}$}\label{resultD2n}
We define $D_{2n}$ by the group that is generated by two elements $a$ and $b$, which is satisfied the relation $a^n=b^2=e$ and $bab^{-1}=a^{-1}$ for each $n\in\mathbb{N}$. It is a non-commutative group if $n\neq 1$. The group $D_{2n}$ is called the dihedral group.

Conjugate classes and the character table of $D_{2n}$ is different in the case of $n$ is even and $n$ is odd. Now, let $\eta$be the primitive $n$-th root of unity.

\begin{itemize}
\item If $n$ is odd, conjugate classes and the character table of $D_{2n}$ is the following statement and table.
\[ C_i=\{a^i,\ a^{-i}\}\ (i=0,1,\dots,\frac{n-1}{2}),\ C':=\{ ba^i\ |\ i=0,1,\dots,n-1\}. \]
\begin{center}
  \begin{tabular}{|c||c|c|c|} \hline
   &$C_i$&$C'$&Remarks \\ \hline
\hline
   Inverse&$C_i$&$C'$& \\ \hline
   Cardinality&$2$&$n$&The case of $C_0$ is $1$.\\ \hline
   Order&$\frac{n}{(i,n)}$&$2$&The case of $C_0$ is $1$. \\ \hline
\hline
   $\chi_1$ &$1$&$  1$&It is a trivial.\\ \hline
   $\chi_2$ &$1$&$-1$&It has order 2. \\ \hline
   $\tau_k$\ ($k=1,\dots,\frac{n-1}{2}$)&$\eta^{ik}+\eta^{-ik}$&$0$& \\ \hline
\end{tabular}
\end{center}

\item If $n$ is even, conjugate classes and the character table of $D_{2n}$ is the following statement and table.
\begin{eqnarray*}
C_i&:=&\{a^i,\ a^{-i}\}\ (i=0,1,\dots,\frac{n}{2}),\\
C'&:=&\{ ba^{2i}\ |\ i=0,1,\dots,\frac{n}{2}-1\},\\
C''&:=&\{ ba^{2i-1}\ |\ i=0,1,\dots,\frac{n}{2}-1\}.
\end{eqnarray*}
\begin{center}
  \begin{tabular}{|c||c|c|c|c|} \hline
   &$C_i$&$C'$&$C''$&Remarks \\ \hline
\hline
  Inverse&$C_i$&$C'$&$C''$& \\ \hline
  Cardinality&$2$&$\frac{n}{2}$&$\frac{n}{2}$&The case of $C_0$ or \\
&&&& $C_{\frac{n}{2}}$ are $1$.\\ \hline
   Order&$\frac{n}{(i,n)}$&$2$&$2$&The case of $C_0$ is $1$. \\ \hline   
\hline
   $\chi_1$ &$1$&$  1$&$ 1$&It is a trivial.\\ \hline
   $\chi_2$ &$1$&$-1$&$-1$&It has order 2. \\ \hline
   $\chi_3$ &$(-1)^i$&$1$&$-1$&It has order 2. \\ \hline
   $\chi_4$ &$(-1)^i$&$-1$&$1$&It has order 2. \\ \hline
   $\tau_k$\ ($k=1,\dots,\frac{n}{2}-1$)&$\eta^{ik}+\eta^{-ik}$&$0$&$0$& \\ \hline
\end{tabular}
\end{center}
\end{itemize}

 We calculate the character of symmetric and exterior powers representation of 2-dimension irreducible character $\tau_k\ (k=1,2,\dots,\dfrac{n-1}{2})$. It is different in the case of $n$. However, we define $\tau_k'\in\MapC{D_{2n}}$ by
\begin{eqnarray*}
\tau_k'(a^i):=\eta^{ik}+\eta^{-ik},\ \tau_k'(ba^i):=0\ \ (i=0,1,\dots,n-1)
\end{eqnarray*}
for each integer $k$. Hence we have $\tau_k=\tau_k'$ for any $k=1,\dots,\dfrac{n-1}{2}$ if $n$ is odd, and for any $k=1,\dots,\dfrac{n}{2}-1$ if $n$ is even. Therefore, to consider $\tau_k'$, we can also calculate in the case of both odd and even of $n$.

About any $\tau_k'$, we have
\begin{itemize}
\item $\tau_k'\tau_l'=\tau_{k+l}'+\tau_{k-l}'.$
\item $\tau_0'=\chi_1+\chi_2$, and If $n$ is even, $\tau_{\frac{n}{2}}'=\chi_3+\chi_4. $
\item $\tau_k'=\tau_{-k}'.$
\item If $k\equiv l \pmod n$, then $\tau_k'=\tau_l'. $
\item $\chi_2\tau_k'=\tau_k'.$
\end{itemize}
for any integers $k$ and $l$. Using these properties, we obtain characters of symmetric and exterior powers representation of 2-dimension irreducible representation of $D_{2n}$ from the next Proposition. 

\begin{prop}\label{resultD2n}
The following equalities hold for any integer $k$.
\begin{enumerate}
\item $\lambda^2(\tau_k')=\chi_2.$
\item $S^{2m-1}(\tau_k')=\displaystyle\sum_{i=1}^m \tau_{(2i-1)k}',\ S^{2m}(\tau_k')=\displaystyle\sum_{i=1}^m \tau_{2ik}'+\chi_1\ (m\in\mathbb{N}).$
\end{enumerate}
\end{prop}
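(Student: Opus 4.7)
The plan is to obtain a short recursion for $S^n(\tau_k')$ from Proposition \ref{formulaanother} and then prove (2) by induction. The engine is the fact that $\dim \tau_k'=2$, so $\lambda^j(\tau_k')=0$ for $j\geq 3$, which collapses the identity
\[ S^n-\lambda^1 S^{n-1}+\lambda^2 S^{n-2}-\cdots+(-1)^n\lambda^n=0 \]
into the three-term relation
\[ S^n(\tau_k')=\tau_k'\,S^{n-1}(\tau_k')-\chi_2\,S^{n-2}(\tau_k')\qquad(n\geq 2), \]
once (1) is established.

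First I would prove (1) by direct computation using Newton's formula $\lambda^2=\tfrac{1}{2}((\lambda^1)^2-\psi^2)$. On the cyclic part I get $\psi^2(\tau_k')(a^i)=\tau_k'(a^{2i})=\eta^{2ik}+\eta^{-2ik}$ and $(\tau_k')^2(a^i)=(\eta^{ik}+\eta^{-ik})^2=\eta^{2ik}+2+\eta^{-2ik}$, so $\lambda^2(\tau_k')(a^i)=1$. On the reflection part, the key observation is that $(ba^i)^2=e$ in $D_{2n}$, hence $\psi^2(\tau_k')(ba^i)=\tau_k'(e)=2$, while $(\tau_k')^2(ba^i)=0$; therefore $\lambda^2(\tau_k')(ba^i)=-1$. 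Matching against the character table gives $\lambda^2(\tau_k')=\chi_2$.

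For (2) I would induct on $n$. The base cases are $S^0(\tau_k')=\chi_1$ (matching the $m=0$ even-case formula as an empty sum plus $\chi_1$) and $S^1(\tau_k')=\tau_k'=\tau_{(2\cdot 1-1)k}'$ (the $m=1$ odd case). For the step, using the recursion together with the bullet-point identities $\tau_k'\tau_l'=\tau_{k+l}'+\tau_{k-l}'$, $\tau_0'=\chi_1+\chi_2$, $\chi_2\tau_l'=\tau_l'$, and $\tau_{-k}'=\tau_k'$, I would compute both parities. In the odd case $n=2m+1$, the recursion gives
\[ S^{2m+1}(\tau_k')=\tau_k'\Bigl(\sum_{i=1}^m\tau_{2ik}'+\chi_1\Bigr)-\chi_2\sum_{i=1}^m\tau_{(2i-1)k}'=\sum_{i=1}^m\bigl(\tau_{(2i+1)k}'+\tau_{(2i-1)k}'\bigr)+\tau_k'-\sum_{i=1}^m\tau_{(2i-1)k}', \]
which after telescoping and reindexing is $\sum_{i=1}^{m+1}\tau_{(2i-1)k}'$. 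The even case $n=2m$ is analogous; the only subtlety is that the term $\tau_{2(1-1)k}'=\tau_0'$ produced by $\tau_k'\tau_k'$ must be expanded as $\chi_1+\chi_2$, and the stray $\chi_2$ then cancels against the $-\chi_2$ coming from $-\chi_2\cdot\chi_1$ in the other summand, leaving exactly $\sum_{i=1}^m\tau_{2ik}'+\chi_1$.

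The only delicate step is this bookkeeping in the even case — making sure the $\tau_0'$ that appears from the product of the two extremal terms is split correctly and that the cancellation with the $\chi_2$-contribution is clean. Everything else is forced by the three-term recursion and the multiplication rules already listed just above the proposition.
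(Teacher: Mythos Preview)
Your proposal is correct and follows essentially the same route as the paper: establish $\lambda^2(\tau_k')=\chi_2$ via Newton's formula, derive the three-term recursion $S^n=\tau_k'S^{n-1}-\chi_2 S^{n-2}$ from Proposition~\ref{formulaanother}, and then induct using the listed multiplication rules for the $\tau_l'$. The only cosmetic differences are that the paper shortcuts part~(1) by noting $\lambda^2(\tau_k')$ is one-dimensional (so only the reflection values need checking), and it takes $S^1,S^2$ rather than $S^0,S^1$ as the induction base.
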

\begin{proof}
$(1)$ Since $\lambda^2(\tau_k')$ has one-dimemsion, it suffices to calculate that  $\lambda^2(\tau_k')(ba^i)$ for any $i=0,1,\dots,n-1$. Using\ (\ref{Formula1}), we have
\begin{eqnarray}\label{dihedral}
\lambda^2(\tau_k')(ba^i)=\dfrac{1}{2}(\tau_k'(ba^i)^2-\psi^2(\tau_k')(ba^i))=\dfrac{1}{2}(0^2-2)=-1.
\end{eqnarray}
Hence, we obtain $\lambda^2(\tau'_k)=\chi_2$ from character table.

$(2)$ The result of $(1)$ and $(\ref{Formula2})$ give the following equation.
\[ S^m(\tau_k')=\tau_k' S^{m-1} (\tau_k')-\chi_2 S^{m-2}(\tau_k')\ (m\geq 2)\]
To prove $(2)$, we use the induction by $m\in\mathbb{N}$. It is clear that $S^1(\tau_k')$ equals $\tau_k'$, and putting $m=2$ in (\ref{dihedral}), we have
\[ S^2(\tau_k')=\tau_k' S^{1} (\tau_k')-\chi_2 S^{0}(\tau_k')=(\tau_k')^2-\chi_2=\tau_{2k}'+\tau_0'-\chi_2=\tau_{2k}'+\chi_1 \]
Hence, we prove $(2)$ when $m=1,2$ Suppose by induction that $(2)$ holds when $m\in\mathbb{N}$, we have
\begin{eqnarray*}
S^{2m+1}(\tau_k')&=&\tau_k' S^{2m} (\tau_k')-\chi_2 S^{2m-1}(\tau_k')\\
&=&\tau_k' \Big(\Big(\sum_{i=1}^m \tau_{2ik}' \Big)+\chi_1\Big)-\chi_2\Big(\sum_{i=1}^m \tau_{(2i-1)k}' \Big)\\
&=&\Big(\sum_{i=1}^m (\tau_{(2i+1)k}'+\tau_{(2i-1)k}')\Big)+\tau_k'-\Big(\sum_{i=1}^m \tau_{(2i-1)k}' \Big)=\sum_{i=1}^{m+1}\tau_{(2i-1)k}' \\
S^{2m+2}(\tau_k')&=&\tau_k' S^{2m+1} (\tau_k')-\chi_2 S^{2m}(\tau_k')\\
&=&\tau_k' \Big(\sum_{i=1}^{m+1} \tau_{(2i-1)k}' \Big)-\chi_2\Big(\Big(\sum_{i=1}^m \tau_{2ik}'\Big)+\chi_1 \Big)\\
&=&\Big(\sum_{i=1}^{m+1} (\tau_{2ik}'+\tau_{(2i-2)k}')\Big)-\sum_{i=1}^m \tau_{2ik}' -\chi_2=\sum_{i=1}^{m+1}\tau_{2ik}' +\chi_1.
\end{eqnarray*}
Hence, we have $(2)$ when $m+1$. This finishes the induction and the proof.
\end{proof}

\subsection{The generalized quaternion group $Q_{4n}$}\label{resultQ4n}
We define $Q_{4n}$ by the group that is generated by two elements $a$ and $b$, which is satisfied the relation $a^{2n}=e, b^2=a^n$ and $bab^{-1}=a^{-1}$ for each $n\geq 2$. The group $Q_{4n}$ is called the generalized quaternion group.

Conjugate classes and the character table of $Q_{2n}$ are following statements and table where $\eta$ is a primitive $2n$-th root of unity.
\begin{eqnarray*}
C_i&:=&\{a^i,\ a^{-i}\}\ (i=0,1,\dots,n),\\
C'&:=&\{ ba^{2i}\ |\ i=0,1,\dots,n-1\},\\
C''&:=&\{ ba^{2i-1}\ |\ i=0,1,\dots,n-1\}.
\end{eqnarray*}
\begin{itemize}
\item There are four character which have one-dimension. however, These two of the four depends on even or odd of $n$. Its commutator subgroup $D(Q_{4n})$, which is the subgroup generated by the elements $xyx^{-1}y^{-1}$ for $x,y\in Q_{4n}$, is the n-cycle group which is generated an element $a^2$ and the quotient group $Q_{4n}/D(Q_{4n})$ is a group with cardinality 4. The reason is that if $n$ is even, then the quotient group $Q_{4n}/D(Q_{4n})$ is isomorphic to the direct sum of two 2-cycle group since an element $b^2=a^n$ belongs to  $D(Q_{4n})$, and if $n$ is odd, then $Q_{4n}/D(Q_{4n})$ is isomorphic to the 4-cycle group since an element $b^2=a^n$ do not belong to $D(Q_{4n})$.
\begin{center}
\begin{tabular}{|c||c|c|c|c|c|} \hline
   &$C_{2i}$&$C_{2i-1}$&$C'$&$C''$&Remarks \\ \hline
\hline
   Inverse&$C_{2i}$&$C_{2i-1}$&$C'$&$C''$&\\ \hline
   Cardinality&$2$&$2$&$\frac{n}{2}$&$\frac{n}{2}$&The case of $C_0$ and $C_n$ is $1$.\\ \hline
   Order&$\frac{n}{(2i,2n)}$&$\frac{n}{(2i-1,2n)}$&$2$&$2$&The case of $C_0$ is $1$. \\ \hline
\hline
   $\chi_1$ &$1$ &$1$&$  1$&$ 1$&It is a trivial.\\ \hline
   $\chi_2$ &$1$ &$1$&$-1$&$-1$&It has order 2 \\ \hline
   $\chi_3'$ &$1$&$-1$&$1$&$-1$&It is occurred when $n$ is even.\\
 &&&&& It has order 2. \\ \hline
   $\chi_4'$ &$1$&$-1$&$-1$&$1$&It is occurred when $n$ is even. \\ 
 &&&&& It has order 2. \\ \hline
   $\chi_3''$ &$1$&$-1$&$i$&$-i$&It is occurred when $n$ is odd. \\ 
 &&&&& It has order 4. \\ \hline
   $\chi_4''$ &$1$&$-1$&$-i$&$i$&It is occurred when $n$ is odd. \\  &&&&& It has order 4. \\ \hline
\end{tabular}
\end{center}
\item In addition, characters of two-dimension representation, $\tau_k\ (k=1,2,\dots,n-1)$ are induced character of the following character
\[ \zeta_k:\langle a \rangle\rightarrow\mathbb{C},\ \zeta_k(a^i):=\eta^{ik}\ (i=0,1,\dots,2n-1). \]
where $\langle a \rangle$ is a subgroup of $Q_{4n}$, which is a $2n$-cycle group generated by an element $a$ of $Q_{4n}$. The value of these character is the following table.
\begin{center}
\begin{tabular}{|c||c|c|}\hline
 &$C_i$&$C',\ C''$\\ \hline\hline
 $\tau_k\ (k=1,2,\dots,n-1)$&$\eta^{ik}+\eta^{-ik}$&$0$\\ \hline
\end{tabular}
\end{center}
\end{itemize}

we calculate about the character $\tau_k$. As in the case of the dihedral group, let $\tau_k'\in\MapC{Q_{4n}}$ be the following equalities for any integer $k$.
\begin{eqnarray*}
\tau_k'(a^i):=\eta^{ik}+\eta^{-ik},\ \tau_k'(ba^i):=0\ \ (i=0,1,\dots,2n-1).
\end{eqnarray*}
Hence, we have $\tau_k=\tau_k' $ for any $k=1,2,\dots, n-1$.

About any $\tau_k'$, we have
\begin{itemize}
\item $\tau_k'\tau_l'=\tau_{k+l}'+\tau_{k-l}'.$
\item $\tau_0'=\chi_1+\chi_2,\ \tau_n'=\begin{cases}\chi_3'+\chi_4' & ($if $n$ is odd$.), \\ \chi_3''+\chi_4'' & ($if $n$ is even$.).\end{cases}$
\item $\tau_k'=\tau_{-k}'.$
\item $k\equiv l\pmod {2n}\Longrightarrow \tau_k'=\tau_l'. $
\item $\chi_2\tau_k'=\tau_k'.$
\end{itemize}
for any integers $k$ and $l$. Using these properties, we obtain characters of symmetric and exterior powers representation of 2-dimension irreducible representation of $Q_{4n}$ from the next Proposition.
\begin{prop}
The following equalities about an element of $\MapC{Q_{4n}}$ hold for any integer $k$
\begin{enumerate}
\item $\lambda^2(\tau_k')=\chi_2^{k+1}.$\\
\item $S^{2m-1}(\tau_k')=\displaystyle\sum_{i=1}^m \tau_{(2i-1)k}',\ S^{2m}(\tau_k')=\displaystyle\sum_{i=1}^m \tau_{2ik}'+(\chi_2)^{mk}\ \ (m\in\mathbb{N}).$
\end{enumerate}
\end{prop}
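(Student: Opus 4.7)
The plan is to mirror the structure of Proposition \ref{resultD2n}: first compute $\lambda^2(\tau_k')$ via Newton's formula (\ref{Formula1}), then use the two-dimensionality of $\tau_k'$ (so $\lambda^i(\tau_k') = 0$ for $i \geq 3$) to reduce Proposition \ref{formulaanother} to a three-term recurrence for $S^n(\tau_k')$, which can be solved by induction.

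For part (1), Newton's formula at $n = 2$ gives $2\lambda^2(\tau_k') = (\tau_k')^2 - \psi^2(\tau_k')$. The key computational difference from the dihedral case is the relation $(ba^i)^2 = b^2 = a^n$ in $Q_{4n}$, so $\psi^2(\tau_k')(ba^i) = \tau_k'(a^n) = \eta^{nk} + \eta^{-nk} = 2(-1)^k$, because $\eta$ is a primitive $2n$-th root of unity and hence $\eta^n = -1$. On powers of $a$ a direct expansion yields $\lambda^2(\tau_k')(a^i) = 1$, and on $ba^i$ one obtains $\lambda^2(\tau_k')(ba^i) = \tfrac{1}{2}(0 - 2(-1)^k) = (-1)^{k+1}$. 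Since $\chi_2$ takes value $+1$ on each $C_i$ and $-1$ on $C'$ and $C''$, reading from the character table identifies this class function as $\chi_2^{k+1}$.

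For part (2), Proposition \ref{formulaanother} combined with (1) and $\lambda^i(\tau_k') = 0$ for $i \geq 3$ produces
\[ S^n(\tau_k') = \tau_k' \cdot S^{n-1}(\tau_k') - \chi_2^{k+1} \cdot S^{n-2}(\tau_k') \qquad (n \geq 2), \]
with $S^0(\tau_k') = \chi_1$ and $S^1(\tau_k') = \tau_k'$. I would then prove both formulas of (2) simultaneously by induction on $m$, using the listed identities $\tau_k'\tau_l' = \tau_{k+l}' + \tau_{k-l}'$, $\chi_2 \tau_j' = \tau_j'$, and $\tau_0' = \chi_1 + \chi_2$. The identity $\chi_2 \tau_j' = \tau_j'$ immediately strips the $\chi_2^{k+1}$ prefactor off every $\tau$-summand, while the $\tau_k'$ multiplication telescopes consecutive $\tau$-terms in the familiar way.

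The main obstacle is the one-dimensional tail in the even-index step. Passing from $S^{2m}(\tau_k')$ to $S^{2m+2}(\tau_k')$ leaves, after the telescoping, a residual $\tau_0' - \chi_2^{(m+1)k+1} = \chi_1 + \chi_2 - \chi_2^{(m+1)k+1}$, which must reduce to $\chi_2^{(m+1)k}$. This collapses under a short parity case-split on $(m+1)k$ combined with $\chi_2^2 = \chi_1$; this is precisely the step where the dihedral answer $\chi_1$ from Proposition \ref{resultD2n} is replaced by the more delicate $\chi_2^{mk}$, reflecting the fact that $b^2 = a^n$ rather than $e$ in $Q_{4n}$.
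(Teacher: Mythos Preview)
Your proposal is correct and follows essentially the same approach as the paper: Newton's formula for $\lambda^2(\tau_k')$ using $(ba^i)^2=a^n$, then the three-term recurrence $S^n(\tau_k')=\tau_k'S^{n-1}(\tau_k')-\chi_2^{k+1}S^{n-2}(\tau_k')$ solved by induction. The only cosmetic difference is that where you do a parity case-split on $(m+1)k$ to simplify $\chi_1+\chi_2-\chi_2^{(m+1)k+1}$, the paper invokes (without comment) the equivalent identity $\chi_1+\chi_2-\chi_2^{j}=\chi_2^{j+1}$ directly.
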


\begin{proof} Plan of proof is almost the same as (1).

$(1)$ Since $\lambda^2(\tau_k')$ has one-dimension. Using by (\ref{Formula1}), we have
\begin{eqnarray*}
\lambda^2(\tau_k')(a^i)&=&\dfrac{1}{2}(\tau_k'(a^i)^2-\psi^2(\tau_k')(a^i))=\dfrac{1}{2}(\eta^{2ki}+2+\eta^{-2ki}-\eta^{2ki}-\eta^{-2ki})=1,\\
 \lambda^2(\tau_k')(ba^i)&=&\dfrac{1}{2}(\tau_k'(ba^i)^2-\psi^2(\tau_k')(ba^i))=\dfrac{1}{2}(0^2-\tau_k(a^n))=(-1)^{k+1} 
\end{eqnarray*}
Hence, we obtain $\lambda^2(\tau'_k)=\chi_2^{k+1}$ from character table.

$(2)$ The result of $(1)$ and $(\ref{Formula2})$ give the following equation.
\begin{eqnarray}\label{quaternion}
S^m(\tau_k')=\tau_k' S^{m-1} (\tau_k')-\chi_2^{k+1} S^{m-2}(\tau_k')\ (m\geq 2)
\end{eqnarray}
To prove $(2)$, we use the induction by $m\in\mathbb{N}$. It is clear that $S^1(\tau_k')$ equals $\tau_k'$, and putting m=2 in (\ref{quaternion}), we have
\[ S^2(\tau_k')=\tau_k' S^{1} (\tau_k')-\chi_2^{k+1} S^{0}(\tau_k')=(\tau_k')^2-\chi_2^{k+1}=\tau_{2k}'+\tau_0'-\chi_2^{k+1}=\tau_{2k}'+\chi_2^{k+2}=\tau_{2k}'+\chi_2^{k} \]
Hence we prove $(2)$ when $m=1$. Suppose by induction that $(2)$ holds when $m\in\mathbb{N}$, we have
\begin{eqnarray*}
S^{2m+1}(\tau_k')&=&\tau_k' S^{2m} (\tau_k')-\chi_2^{k+1} S^{2m-1}(\tau_k')\\
&=&\tau_k' \Big(\Big(\sum_{i=1}^m \tau_{2ik}' \Big)+\chi_2^{mk}\Big)-\chi_2^{k+1}\Big(\sum_{i=1}^m \tau_{(2i-1)k}' \Big)\\
&=&\Big(\sum_{i=1}^m (\tau_{(2i+1)k}'+\tau_{(2i-1)k}')\Big)+\tau_k'-\Big(\sum_{i=1}^m \tau_{(2i-1)k}' \Big)=\sum_{i=1}^{m+1}\tau_{(2i-1)k}' \\
S^{2m+2}(\tau_k')&=&\tau_k' S^{2m+1} (\tau_k')-\chi_2^{k+1} S^{2m}(\tau_k')\\
&=&\tau_k' \Big(\sum_{i=1}^{m+1} \tau_{(2i-1)k}' \Big)-\chi_2^{k+1}\Big(\Big(\sum_{i=1}^m \tau_{2ik}'\Big)+\chi_2^{mk} \Big)\\
&=&\Big(\sum_{i=1}^{m+1} (\tau_{2ik}'+\tau_{(2i-2)k}')\Big)-\sum_{i=1}^m \tau_{2ik}' -\chi_2^{mk+k+1}\\
&=&\sum_{i=1}^{m+1}\tau_{2ik}' +\chi_2^{mk+k+2}=\sum_{i=1}^{m+1}\tau_{2ik}'+\chi_2^{m(k+1)}
\end{eqnarray*}
Hence, we have $(2)$ when $m+1$. This finishes the induction and the proof.
\end{proof}

\subsection{Heisenberg\ group\ modulo\ $p$\ $H_p$}\label{result}
We let $H_p$ be the following sets.
\[ H_p:=\Bigl\{ \begin{pmatrix} 1 & a & b \\ 0 & 1 & c \\ 0 & 0 & 1 \end{pmatrix}\in GL(3,\mathbb{Z}/p\mathbb{Z})\ |\ a,b,c\in\mathbb{Z}/p\mathbb{Z} \Bigl\} 
\]
where $p$ is a prime number such that $p\neq 2$. It is said to Heisenberg\ group\ modulo\ $p$. 

Conjugate classes and character table of $H_p$ are the following statements and table were $\eta$ is a primitive $p$-th root of unity.
\begin{eqnarray*}
C(h)&:=&\Bigl\{\begin{pmatrix}1&0&h\\0&1&0\\0&0&1\end{pmatrix}\Bigl\}\ \ (h\in\mathbb{Z}/p\mathbb{Z}), \\
C(e,f)&:=&\Bigl\{\begin{pmatrix}1&e&k\\0&1&f\\0&0&1\end{pmatrix}\ \Bigl|\ k\in\mathbb{Z}/p\mathbb{Z}\Bigl\}\ \ (e,f\in\mathbb{Z}/p\mathbb{Z},\ (e,f)\neq(0,0)).
\end{eqnarray*}
\begin{center}
  \begin{tabular}{|c||c|c|c|} \hline
   &$C(h)$&$C(e,f)$&Remarks \\ \hline
\hline
   Inverse&$C(-h)$&$C(-e,-f)$& \\ \hline
   Cardinality&$1$&$p$&\\ \hline
   Order&$p$&$p$&In the case of $C(0)$ is $1$. \\ \hline
\hline
   $\chi_{i,j}$ &$1$&$\eta^{ei+fj}$&$\chi_{0,0}$ is a trivial \\ 
   $ (i,j=0,1,\dots,p-1)$& & &and the other have order $p$.\\ \hline
   $\tau_s\ (s=1,2,\dots,p-1)$ &$p\eta^{sh}$&$0$& \\ \hline
\end{tabular}
\end{center}

The calculate of $\tau_s$ had already described in Example \ref{ExampleHeisenberg}, so we described the result again. we have 
\begin{eqnarray*}
\lambda^n(\tau_s)&=&\begin{cases} \chi_{0,0} & (n=0,p), \\ \dfrac{1}{p}\ \displaystyle\binom{p}{n}\tau_{n'} & (n=1,\dots ,p-1),\\
0 & (n>p).
\end{cases}\\ 
S^n(\tau_s)&=&\begin{cases} \dfrac{1}{p^2}\Big(\displaystyle\binom{p+n-1}{n}+(p^2-1)\Big)\chi_{0,0}\\ +\displaystyle\sum_{i,j\ (i,j)\neq(0,0)}\dfrac{1}{p^2}\Big(\binom{p+n-1}{n}-1\Big)\chi_{i,j} & (p \mid n), \\ \dfrac{1}{p}  \displaystyle{p+n-1}{n}\tau_{n'} & (p\nmid n). 
\end{cases}
\end{eqnarray*}
for any $n=0,1,2,\dots$ where $n'$ is a natural number such that $n'=1,2,\dots,p-1$ and $ns\equiv n' \pmod p$.
\newpage

\section{Future work}

 As we write this paper this time, there is the most important starting point which is \cite{bu-1}\ $p.95$ In there, Kuntson ,who is an author of \cite{bu-1}. calculate about a character of a 2-th exterior power representation of the irreducible representation of $S_3$ which has dimension 2 as a example (See Example\ \ref{ExampleS31}). However,  it is believed that there is not a literature or paper to refer the calculate of the multiplication of the symmetric and exterior representation of finite group since \cite{bu-1}. However, Seeing the result of the regular representation or irreducible representation of Heisenberg group modulo $p$, it is believed that this will help always somewhere.  In the future works, in addition to calculate about other finite groups and representation, it is important to find the result of this paper.  

As a final note, I would like to thank variety of people including Professor Hiroyuki Ochiai.

$
\\$

Graduate School of Mathematics, Kyushu-University, 

Nishi-ku Fukuoka,  819-0395, Japan.

t-tamura@math.kyushu-u.ac.jp

\end{document}